\documentclass[12pt]{article}
\usepackage{amsfonts}
\usepackage{epsfig}
\usepackage{graphics}
\usepackage{inputenc}
\usepackage{lineno,hyperref}
\usepackage{amsmath, amssymb,amsthm, mathtools, empheq}
\usepackage{paralist, mathtools}
\usepackage{caption}
\usepackage{subcaption}
\usepackage{ulem}
\usepackage{placeins}
\usepackage{enumerate, comment}
\usepackage{mathrsfs}
\usepackage{mathtools}
\usepackage{xcolor}

\usepackage{authblk}

\usepackage[T2A]{fontenc}
\renewcommand{\baselinestretch}{1.2}

\def \ve{\varepsilon}

\def \div {{\rm div}}
\def \eps {{\varepsilon}}
\numberwithin{equation}{section}

\newtheorem{thm}{Theorem}
\numberwithin{thm}{section}
\newtheorem{rem}[thm]{Remark}
\newtheorem{prop}[thm]{Proposition}
\newtheorem{lem}[thm]{Lemma}
\newtheorem{cor}[thm]{Corollary}

\title{
	Short scale asymptotics  in nonlocal spectral problems
	}

\author[1]{Volodymyr Rybalko}
\author[2]{Antonina Rybalko}
\affil[1] {B. Verkin Institute for Low Temperature Physics and Engineering  \ \ \ \ \ \ \ \ \ \ \ \ \ \ \ \ of the National Academy of Sciences of Ukraine, Kharkiv, Ukraine;\ \ \ \ \ \ \ \ \ \ \ \ 
e-mail: vrybalko@ilt.kharkov.ua, v\_rybalko@yahoo.com}
\affil[2]{Kharkiv National University of Radio Electronics, Kharkiv, Ukraine; \ \ \ \ \ \ \ \ \
e-mail:  antonina.rybalko@nure.ua}

\begin{document}

\maketitle

\begin{abstract} We study the asymptotic behavior of  eigenvalues of nonlocal 
convolution-type operators 
$$
\mathcal{L}_\ve \rho=-\frac{1}{\ve^{d}}\int_\Omega J\left({\frac{|x-y|^2}{\ve^2}}\right)
	\rho(y)dy+
	a(x)\rho(x),
$$
(with a continuous sufficiently fast decaying at infinity function $J\geq 0$) when the scaling parameter $\ve>0$ tends to zero.  
Assuming that the minimum of the function $a(x)$ is attained at a point $x_0\in \overline{\Omega}\subset \mathbb{R}^d$ and is non-degenerate, 
we establish several terms in the asymptotic expansion of the first eigenvalues $\lambda_\ve^{(k)}$ of $\mathcal{L}_\ve$. 
 If $x_0$ is an interior point of the domain $\Omega$, we show that  $\lambda_\ve^{(k)}=a(x_0)-1+\ve \mu^{(k)}+\overline{o}(\ve)$, where $\mu^{(k)}$ are eigenvalues of a harmonic oscillator (obtained by 
 the blow up analysis), the corresponding eigenfunctions of $\mathcal{L}_\ve$ are concentrated near $x_0$ on the scale $\ve^{1/2}$. If $x_0\in\partial \Omega$ then the asymptotic expansion 
 of the eigenvalues of $\mathcal{L}_\ve$ has the form $\lambda_\ve^{(k)}=a(x_0)-1+\ve^{2/3} \Lambda_0+ \ve (\mu^{(k)}+\alpha p_0)+\overline{o}(\ve)$. The corresponding eigenfunctions are represented 
 as the product of a function depending on the distance to $\partial\Omega$ and concentrated on the scale $\ve^{2/3}$, and  functions localized on the scale $\ve^{1/2}$ and  asymptotically
 independent of the distance to $\partial\Omega$. The latter functions converge (after rescaling) to eigenfunctions of a harmonic oscillator (in tangential variables), $\mu^{(k)}$ being their eigenvalues. The 
 former function is given, to the leading term, by the scaled Airy function, and  $\Lambda_0$, $\alpha$ and $p_0$ depend on $J(|x|^2)$ and  $a(x)$. The results are obtained by reducing the study to 
 $\Gamma$-convergence of special energy functionals.
\end{abstract}

Mathematics Subject Classification (2020). 

Primary 47A75; Secondary 35B27, 45E10, 45M05, 92D25.

\bigskip

{\bf Keywords:} Spectral problem, nonlocal dispersal operator, $\Gamma$-convergence, singular limits

\section{Introduction}

This work deals with asymptotic spectral analysis of nonlocal operators of the form 
\begin{equation}
	\mathcal{L}_\ve \rho =\frac{1}{\ve^{\kappa}}\left(\int_{\mathbb{R}^d}J\left({\frac{|x-y|^2}{\ve^2}}\right)dy\, \rho(x) - \int_\Omega J\left({\frac{|x-y|^2}{\ve^2}}\right)
	\rho(y)dy\right)+
	m(x)\rho(x)
	\label{main_operator_locper_first} 
\end{equation}
when the scaling parameter $\ve>0$ tends to zero. Such nonlocal operators naturally appear in 
mathematical biology and population dynamics when nonlocal interactions have to be taken into 
account  \cite{CroKim1970}, \cite{HutMarMisVic2003}, \cite{OthDunAlt1988}. In the latter context 
$\rho(x)$ is thought of as the density of population at the location $x$ and $\frac{1}{\ve^{\kappa}} J\left(\frac{|x-y|^2}{\ve^2}\right)$
represents the rate of jumps from location $y$ to location $x$. The
(negative of) two first terms in  \eqref{main_operator_locper_first} 
account for the instantaneous change of density due to individuals leaving the location $x$ and those arriving 
from all other places, respectively. 

 It is known (see e.g. \cite{CorEtgRosWol2008}, \cite{BerCovHoa2016})
%
%
%
%
%
that asymptotic analysis of  such convolution-type nonlocal operators 
leads to a local (differential) operator when the scaling parameter $\ve$ tends to zero. In particular, in the case of a sufficiently fast decaying kernel $J(|z|^2)$ and the critical 
value  of the exponent $\kappa=d+2$, resolvents of the operators \eqref{main_operator_locper_first}  
compactly converge to that of $-\frac{\vartheta}{d}\Delta \rho+m(x) \rho$ as $\ve\to 0$, where $\vartheta$ is the second moment of the kernel $J(|z|^2)$.  
This also yields convergence  of eigenvalues and eigenfunctions. 
A similar result holds when the dispersal kernel contains also a fast oscillating factor, as it appears 
in models  of strongly inhomogeneous media (see e.g. \cite{PiaZhi2017},  \cite{PiaZhi2019}). 
%
In \cite{PiaRyb2024} the authors obtained a homogenized description for the bottom point of the spectra of  convolution type 
operators in the case when the exponent $\kappa$ in the pre-factor $1/\ve^\kappa$ equals the dimension $d$ of the 
space. It was shown that this bottom point  converges, as $\ve\to 0$, to the (unique) additive eigenvalue of an effective Hamilton-Jacobi equation.
This result has  its local counterpart \cite{PiaRyb2016}  concerning the asymptotic behavior of the principal eigenvalue of singularly perturbed 
second-order differential operators of the form $\ve^2 a_{ij}\partial_{x_i x_j} u+\ve b_j \partial_{x_j} u +m(x) u$.  Both works \cite{PiaRyb2024} and \cite{PiaRyb2016}  deal with generic 
non-selfadjoint cases, while in the selfadjoint case the said additive eigenvalue equals  $\min_{x\in\overline\Omega} m(x)$ and much more precise asymptotic formulas can be obtained 
via blow-up analysis. In particular,  \cite{AllPia2002}  studies spectral problems for selfadjoint second-order partial differential operators with eigenfunctions localized on the scale $\ve^{1/2}$ around a point in the domain (the minimum point of the potential if the coefficients have no fine-scale oscillations).

In this work, we stick to the case $\kappa=d$ so that the kernels in \eqref{main_operator_locper_first} are  normalized to the same (unit) total mass.
Like in the case of differential operators, eigenfunctions of \eqref{main_operator_locper_first} concentrate near 
the minimum point of the function $m(x)$.  We show that if the (nondegenerate) minimum of $m(x)$ is an interior point of the domain, then the eigenfunctions 
of  \eqref{main_operator_locper_first} are concentrated on the scale $\ve^{1/2}$ near this point, and blow-up analysis of the problem leads 
to the spectral problem for a harmonic oscillator. Another typical case, when $m(x)$ attains its minimum at the point of the boundary requires more sophisticated 
asymptotic analysis of the problem. It  reveals (again under an appropriate nondegeneracy condition)  oscillations of eigenfunctions on the fast scale  $\ve^{2/3}$ in the direction normal to the boundary and on the slower one $\ve^{1/2}$ in tangential directions.  Moreover, we show that eigenfunctions asymptotically split into the product of a function depending on the distance to the boundary and  another one varying in the tangential directions to the boundary. The former function (given, to the leading term, by the scaled Airy function) is constructed via the boundary layer analysis, while for the latter one we derive the 
spectral problem for a harmonic oscillator in tangential variables.   

The main results of this work are obtained via establishing $\Gamma$-limits of appropriate energy functionals. 
Questions of $\Gamma$-convergence of convolution type energy functionals were addressed  in  \cite{BraPia2021} in the 
case of the critical value $\kappa =d+2$ of the pre-factor $1/\ve^\kappa$. A similar variational problem is obtained also  (via the aforementioned blow up argument)
in the asymptotic analysis of the spectral problem \eqref{main_operator_locper_first}   when the minimum of $m(x)$
is attained at an interior point.  The case when $m(x)$ attains its minimum at the boundary is technically much more involved. We use a factorization method. 
First, exploiting two scale nature of eigenfunctions, we construct a boundary layer solution of an one-dimensional problem obtained by averaging the kernel over the 
(slower) tangential variables. This study is based on a solvability result for a model nonlocal problem on the half line that is of interest itself.
Thus we establish the factor responsible for the (fastest) variations in the normal direction and then single out the energy due to this factor.  
Next we turn to the $\Gamma$-convergence of  (properly rescaled) remaining  energy functionals. These functionals penalize asymptotic 
dimension reduction, and also have a specific difficulty due to the weight factor that asymptotically vanishes on the boundary and at infinity. 
It makes establishing the compactness result (needed for the asymptotic spectral analysis)  quite a challenging task. 
  
The paper is organized as follows. In Section \ref{PSMR}, we state the main results of the work
Theorem \ref{Pershath} and Theorem  \ref{Th22}. Theorem \ref{Pershath}  deals with the case 
when $m(x)$ attains its minimum at an interior point.  The proof of this theorem is presented in Section \ref{secInter}. The primary finding of this study  
is Theorem  \ref{Th22}  dealing with the case when the minimum of $m(x)$ is located on the boundary.  The proof of  Theorem \ref{Pershath} constitutes the remaining Section \ref{SamaSamaSecttsiya}.  
It consists of Subsections \ref{Consruct} and  \ref{BLS} dealing with the construction of the boundary layer solution, Subsection  \ref{Factor_Repr}, where we use the factorization method 
to reduce study to the convergence of special energies, and Subsections \ref{upperbound}, \ref{lowerboundineq} and \ref{CompactThM} establishing the $\Gamma$-convergence and  compactness results for those energies, respectively.

\bigskip

\noindent
{\bf ACKNOWLEDGEMENTS.}
The research of the first author is partially supported by the SSF grant
for Ukrainian scientists Dnr UKR 22-0004,  UKR 24-0023, 
and by the Troms{\o}  Research Foundation 
project "Pure Mathematics in Norway". The authors are grateful to  professor A. Piatnitski for introduction into the subject and fruitful discussions.


\section{Problem setup and main results}
\label{PSMR}
We consider the spectral problem
\begin{equation}
	\mathcal{L}_\ve \rho_\ve = \lambda_\ve \rho_\ve(x)\quad \text{in}\ \Omega,
	\label{start_eq} 
\end{equation}
where $\Omega$ is a bounded domain  in $\mathbb{R}^d$ with $C^3$-boundary, and $\mathcal{L}_\ve$ is the nonlocal operator 
given by
\begin{equation}
	\mathcal{L}_\ve \rho =-\frac{1}{\ve^{d}}\int_\Omega J\left({\frac{|x-y|^2}{\ve^2}}\right)
	\rho(y)dy+
	a(x)\rho(x)
	\label{main_operator_locper} 
\end{equation}
with $\ve>0$ being a small parameter characterizing the typical range of interactions in the system.
We assume that the following conditions are fulfilled
\begin{equation}
	J\in C(\mathbb{R}_+), 
	\quad  a\in C^2(\overline{\Omega}),
		\label{umova_raz}
\end{equation}
\begin{equation}
	\label{umova_dva}
	\quad J(0)>0, \quad \ J(z)\geq 0 \ \  \forall z\in \mathbb{R}_+, 
\end{equation}
\begin{equation}
		\label{umova_try}
	\int\limits_{\mathbb{R}^d} J(|\xi|^2)d\xi=1, \quad \vartheta:=\int\limits_{\mathbb{R}^d} J(|\xi|^2) |\xi|^2 d\xi< \infty. 
	\end{equation} 

		Suppose that $a(x)$ attains its minimum over $\overline{\Omega}$ at exactly one point $x_0$, and for convenience, we
will assume that $x_0=0$.  Then, as shown in \cite{BerCovHoa2016} (see also \cite{PiaRyb2024} for nonself-adjoint convolution-type operators), the principal eigenvalue converges to $m(0)$, where 
		\begin{equation}
	m(x)=a(x)-1.
			\label{min_m} 
		\end{equation}
		Under the additional non-degeneracy assumption on the minimum, we establish much more precise asymptotic formulae for the eigenvalues at the bottom part of the spectrum. The results depend  essentially on whether the minimum of $m(x)$ is an interior point of $\Omega$ or it lies on the boundary. 
				If $0\in\Omega$
		then we suppose the strict positivity of the Hessian,
		\begin{equation}
			\partial^2_{x_ix_j} m(0) \xi_i\xi_j>0 \quad \forall \xi\in\mathbb{R}^{d}\setminus\{0\},
			\label{nond}
		\end{equation}
		and prove the following result.
		\begin{thm}
		\label{Pershath}
			If, under conditions 
			\eqref{umova_raz}-\eqref{umova_try}, the function $m(x)$ attains its minimum at the unique point  $0\in\Omega$ and \eqref{nond} is satisfied, then the first $n$  eigenvalues of $\mathcal{L}_\ve$ (for any $n$ fixed) are given by the asymptotic formula
	\begin{equation*}
		\lambda_\ve^{(k)}=m(0)+\ve\mu^{(k)}+\bar o(\ve),
	\end{equation*}
	where $\mu^{(k)}$ are the eigenvalues of harmonic oscillator, 	
	\begin{equation}
			\mathcal{L}_0 \rho=-\frac{\vartheta}{2d} \Delta \rho (\xi)+
			\frac{1}{2}\partial^2_{x_ix_j} m(0)\xi_i\xi_j \, \rho (\xi) \quad {\rm in} \ \mathbb{R}^d.
				\label{harm_osc}
		\end{equation} 
	Moreover, the corresponding rescaled and properly normalized eigenfunctions
	$ \rho_\ve (\xi/\sqrt{\ve})$    of $\mathcal{L}_\ve$ $L^2$-strongly converge, up to a subsequence, to the eigenfunctions of $\mathcal{L}_0$.
			\label{Th21}
		\end{thm}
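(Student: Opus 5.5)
The plan is to rewrite the eigenvalue problem as a min–max problem for a quadratic form and then pass to the limit in the rescaled form by $\Gamma$-convergence plus compactness. Since $J$ is radial and $\int J=1$, we have for $\rho\in L^2(\Omega)$
\begin{equation*}
(\mathcal{L}_\ve\rho,\rho)-m(0)\|\rho\|^2=\frac{1}{2\ve^d}\int_\Omega\int_\Omega J\Big(\frac{|x-y|^2}{\ve^2}\Big)(\rho(x)-\rho(y))^2dxdy+\int_\Omega \big(m(x)-m(0)+1-\textstyle{\frac1{\ve^d}\int_\Omega} J(\tfrac{|x-y|^2}{\ve^2})dy\big)\rho^2dx .
\end{equation*}
The last factor in the second integral is $\ge m(x)-m(0)\ge 0$. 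It is also exponentially small, or at least $o(\ve)$ by the second moment bound, away from $\partial\Omega$. We substitute $x=\sqrt\ve\,\xi$, $u(\xi)=\ve^{d/4}\rho(\sqrt\ve\xi)$ and divide by $\ve$. This gives the rescaled functional
\begin{equation*}
E_\ve(u)=\frac{1}{2\ve^{1+d/2}}\iint J\Big(\frac{|\xi-\eta|^2}{\ve}\Big)(u(\xi)-u(\eta))^2d\xi d\eta+\int \frac{m(\sqrt\ve\xi)-m(0)}{\ve}u^2d\xi+(\text{boundary term}),
\end{equation*}
with the integrals taken over $\Omega/\sqrt\ve$. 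Here the nonlocal part has range $\delta=\sqrt\ve$ and prefactor $\delta^{-(d+2)}$, which is exactly the critical scaling of \cite{BraPia2021}. We then have $\lambda^{(k)}_\ve=m(0)+\ve\,\mu^{(k)}_\ve$, where $\mu^{(k)}_\ve$ are the min–max values of $E_\ve$ on the unit sphere of $L^2$. The claim reduces to showing $\mu_\ve^{(k)}\to\mu^{(k)}$ together with convergence of the minimizers.

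\textbf{Upper bound.} Take the first $n$ eigenfunctions of $\mathcal{L}_0$ in \eqref{harm_osc}, truncated smoothly to a large ball. They are Hermite-type, so smooth and Gaussian-decaying. For smooth compactly supported $u$, Taylor expansion gives
\begin{equation*}
\frac{1}{2\delta^{d+2}}\iint J(|\xi-\eta|^2/\delta^2)(u(\xi)-u(\eta))^2\to\frac{\vartheta}{2d}\int|\nabla u|^2 ,
\end{equation*}
using $\int J(|z|^2)z_iz_j\,dz=\frac{\vartheta}{d}\delta_{ij}$. Also $(m(\sqrt\ve\xi)-m(0))/\ve\to\frac12\partial^2_{ij}m(0)\xi_i\xi_j$ locally uniformly, since $a\in C^2$ and $\nabla m(0)=0$ at an interior minimum. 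The support stays inside $\Omega/\sqrt\ve$ at distance $\gg\sqrt\ve$ from the boundary, so the boundary term vanishes. Using the span of these $n$ functions in the min–max gives $\limsup\mu^{(k)}_\ve\le\mu^{(k)}$.

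\textbf{Compactness and lower bound.} Let $u_\ve$ have $\|u_\ve\|=1$ and $E_\ve(u_\ve)\le C$. Condition \eqref{nond} and uniqueness of the minimum give $m(x)-m(0)\ge c\min(|x|^2,1)$. Hence $\int\min(|\xi|^2,1/\ve)u_\ve^2\le C$, which rules out mass escaping to infinity, i.e. tightness. The nonlocal energy bound at critical scaling gives precompactness in $L^2_{loc}$. Here I would invoke the compactness argument of \cite{BraPia2021}, or prove it directly by mollifying with $J$-smoothed averages, using $J(0)>0$ and continuity to get $J\ge c>0$ on a small ball. Thus $u_\ve\to u$ strongly in $L^2(\mathbb{R}^d)$ along a subsequence. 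The $\Gamma$-liminf inequality
\begin{equation*}
\liminf E_\ve(u_\ve)\ge\frac{\vartheta}{2d}\int|\nabla u|^2+\frac12\int\partial^2_{ij}m(0)\xi_i\xi_ju^2
\end{equation*}
follows from \cite{BraPia2021}-type arguments for the gradient part, i.e. lower semicontinuity after convolution. The potential part follows from Fatou, and the remaining terms are nonnegative.

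\textbf{Conclusion.} Apply this to the orthonormal eigenfunctions $u^{(1)}_\ve,\dots,u^{(n)}_\ve$. The upper bound gives $E_\ve(u_\ve^{(k)})\le C$. Strong $L^2$ convergence preserves orthonormality of the limits, and the liminf inequality, applied to linear combinations, combined with min–max gives $\liminf\mu^{(k)}_\ve\ge\mu^{(k)}$. Equality in the limit then forces the limits to be eigenfunctions of $\mathcal{L}_0$. The main obstacle I expect is the compactness step: the nonlocal energy only controls differences at range $\sqrt\ve$ on an expanding domain, so one must combine the local compactness estimate with the confining potential carefully. The boundary region is harmless because the remaining terms there are nonnegative, and we only need a lower bound.
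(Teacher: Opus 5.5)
Your proposal is correct and follows essentially the same route as the paper. You rescale $x=\sqrt{\ve}\,\xi$ to reach the critical-scaling convolution energy and prove $\Gamma$-convergence to the harmonic-oscillator form, using a Taylor expansion for the recovery sequence and a lower-semicontinuity argument for the liminf; compactness comes from the nonlocal bound plus the confinement $\frac{1}{\ve}\bigl(m(\sqrt{\ve}\xi)-m(0)\bigr)\ge c\min(|\xi|^2,1/\ve)$, and the Courant min--max principle finishes the argument. The only difference is cosmetic: you keep the form on $\Omega\times\Omega$ with the nonnegative term $\ve^{-d}\int_{\mathbb{R}^d\setminus\Omega}J\,dy$ explicit, whereas the paper extends $\rho$ by zero to $\mathbb{R}^d\times\mathbb{R}^d$, which gives the same quadratic form.
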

		
In the case when the minimum point  $x_0=0\in \partial \Omega$, we pass to the coordinate system such that the $x_d$-axis is oriented along the inner normal, 
while the other axes are oriented along the principal directions of the surface  $\partial \Omega$ at zero. Then, the non-degeneracy of the minimum means that 
		\begin{equation}
			\alpha:=\partial_{x_d} m(0)>0, 
			\label{2.9}
		\end{equation}
		\begin{equation}	
			\alpha \kappa_i\xi_i^2+ \partial^2_{x_i x_j} m(0)\xi_i \xi_j>0 \quad \forall \xi\in\mathbb{R}^{d-1}\setminus\{0\},
\label{2.10}			
		\end{equation}
where $\kappa_i$ denote the principal curvatures of $\partial\Omega$.
		
	The main result of this work is 
	\begin{thm}
		\label{Th22}
		Assume that 	\eqref{umova_raz}--\eqref{umova_try} hold, moreover, $J$ is a $C^2$-function with compact support. Assume also that the function $m(x)$ attains its minimum at the unique point  $0\in\partial\Omega$ and \eqref{2.9},  \eqref{2.10} are satisfied, then the first $n$  eigenvalues of $\mathcal{L}_\ve$ (for any $n$ fixed) are given by the asymptotic formula
\begin{equation*}
		\lambda_\ve^{(k)}=m(0)+\ve^{2/3}\Lambda_0+\ve(\mu^{(k)} +\alpha q_0) +\bar o(\ve),
\end{equation*}
where $\Lambda_0$ is the principal eigenvalue 
		of the equation 
		\begin{equation}
			-\frac{\vartheta}{2d} \, \phi_0''(q) + \alpha q \phi_0(q)=\Lambda_0 \phi_0(q), \quad q>0, 
			\label{Airy_sln}
		\end{equation}
with the boundary condition $\phi_0(0)=0$;
$\mu^{(k)}$ are the eigenvalues of the $(d-1)$-dimensional harmonic oscillator 
\begin{equation}
			\mathcal{L}_0 \psi=-\frac{\vartheta}{2d} \Delta \psi (\xi)+\frac{1}{2}\left(\alpha \kappa_i\xi_i^2 +
			\partial^2_{x_ix_j} m(0)\xi_i\xi_j\right)\psi (\xi) \quad {\rm in} \ \mathbb{R}^{d-1};
			\label{harm_osc_bound}
\end{equation} 
	 $q_0$ is defined in 	Corollary \ref{cor72}.
		\label{Th22}
	\end{thm}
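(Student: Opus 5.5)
The argument is variational, via min–max and $\Gamma$-convergence of rescaled quadratic forms, built around a boundary-layer factorization as announced in the introduction. $\mathcal{L}_\ve$ is self-adjoint, so its eigenvalues below the essential spectrum are given by min–max for the quadratic form
\[
E_\ve(\rho)=\frac{1}{2\ve^d}\int_\Omega\int_\Omega J\Big(\frac{|x-y|^2}{\ve^2}\Big)(\rho(x)-\rho(y))^2\,dx\,dy+\int_\Omega \big(m(x)+1-A_\ve(x)\big)\rho^2\,dx .
\]
Here $A_\ve(x)=\ve^{-d}\int_\Omega J(|x-y|^2/\ve^2)\,dy$, and $1-A_\ve(x)$ is a nonnegative boundary term that is supported, since $J$ has compact support, in an $\ve$-neighbourhood of $\partial\Omega$. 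Because $J$ has compact support and $m$ is bounded away from $m(0)$ outside any neighbourhood of $0$, an Agmon-type estimate gives exponential localization near $0$ of eigenfunctions with eigenvalue $m(0)+O(\ve^{2/3})$. After flattening the boundary in a $C^3$ chart, I will write $x=(x',x_d)$ with $x'$ tangential and $x_d$ the distance to the boundary.

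First step (boundary layer). Averaging the kernel over the tangential variables gives a one-dimensional kernel $\hat J$ with second moment $\vartheta/d$. For $q>0$ I will study the model problem on the half line
\[
-\frac{1}{\ve^{2/3}}\Big(\int_{0}^{\infty}\hat J_\ve(q-s)\phi(s)\,ds-\phi(q)\Big)+\alpha q\,\phi=\Lambda\phi ,
\]
written in the variable $q=x_d/\ve^{2/3}$, so that the discrete kernel has range $\ve^{1/3}$ in $q$. I will construct its principal eigenfunction $\phi_\ve=\phi_0+\ve^{1/3}\phi_1+\dots$ with $\Lambda_\ve=\Lambda_0+\ve^{1/3}\Lambda_1+\dots$. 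The outer expansion gives the Airy problem \eqref{Airy_sln}. The inner layer near $q=0$, on the scale $\ve^{1/3}$, requires solving a model nonlocal problem on the half line of Wiener–Hopf type. This is the solvability result alluded to in the introduction. It produces a boundary-layer corrector and an effective Robin-type shift, and I expect this shift to generate the constant $q_0$ of Corollary~\ref{cor72}. The combination $\alpha q_0$ in the eigenvalue suggests that the order-$\ve$ contribution appears as an effective displacement $q_0$ of the boundary.

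Second step (factorization). For any trial $\rho$ I will write $\rho(x)=\Phi_\ve(x_d)\,w(x)$, where $\Phi_\ve(x_d)=\phi_\ve(x_d/\ve^{2/3})$ solves the first step, possibly with a tangentially varying correction coming from the curvature. A ground-state substitution (the Jacobi identity $(\Phi w(x)-\Phi w(y))^2=\Phi(x)\Phi(y)(w(x)-w(y))^2+\dots$) yields
\[
E_\ve(\Phi_\ve w)-\big(m(0)+\ve^{2/3}\Lambda_\ve\big)\|\Phi_\ve w\|^2=\frac{1}{2\ve^d}\iint J_\ve\,\Phi_\ve(x)\Phi_\ve(y)(w(x)-w(y))^2+\int\big(m(x)-m(0)-\alpha x_d\big)\Phi_\ve^2w^2+\text{(curvature terms)}.
\]
Dividing by $\ve$ and rescaling $x'=\sqrt\ve\,\xi$ gives a family of weighted functionals $F_\ve(w)$. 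Here the weight $\Phi_\ve^2$ vanishes at the boundary and at infinity.

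Third step ($\Gamma$-convergence and compactness). I will show that $F_\ve$ $\Gamma$-converges, with respect to the weighted $L^2$ topology, to
\[
\int_{\mathbb{R}^{d-1}}\Big(\frac{\vartheta}{2d}|\nabla\psi|^2+\tfrac12(\alpha\kappa_i\xi_i^2+\partial^2_{ij}m(0)\xi_i\xi_j)\psi^2\Big)d\xi+\alpha q_0\int\psi^2 ,
\]
and that this limit is finite only on functions independent of $q$. Normal variations of $w$ are penalized at order $\ve^{-1/3}\gg1$ relative to the tangential ones, so the energy forces dimension reduction. The term $\alpha\kappa_i\xi_i^2$ comes from expanding $m$ in boundary-adapted coordinates, where $x_d\approx y_d+\tfrac12\kappa_iy_i^2$. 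The upper bound will use recovery sequences $w=\psi(x'/\sqrt\ve)$. The lower bound will combine a Fourier or difference-quotient argument for the nonlocal tangential term with a convexity inequality.

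The main obstacle is compactness: an energy bound on $F_\ve(w_\ve)$ must give strong weighted-$L^2$ precompactness, despite the degenerate weight and the nonlocal structure. I would first prove a weighted Poincaré inequality in $q$, using the spectral gap of the boundary-layer problem, to control $w-\bar w(\xi)$ for the $\Phi^2$-average $\bar w$. Next I would derive nonlocal $H^1$-type estimates in $\xi$ for $\bar w$, together with confinement from the harmonic potential. Min–max then transfers convergence of the spectra, giving $\lambda^{(k)}_\ve=m(0)+\ve^{2/3}\Lambda_0+\ve(\mu^{(k)}+\alpha q_0)+\bar o(\ve)$. Recovering $\Lambda_0$ exactly at order $\ve^{2/3}$, with the $\ve^{1/3}$ correction absorbed into $q_0$, requires the boundary-layer expansion of the first step to be accurate to $o(\ve^{1/3})$.
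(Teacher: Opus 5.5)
Your outline follows the paper's proof. It has the Airy profile in $q=r/\ve^{2/3}$, with the half-line problem of Subsection~\ref{BLS} supplying the inner layer and $q_0$. It has the ground-state factorization $\rho_\ve=\phi_\ve\psi_\ve$ of Subsection~\ref{Factor_Repr}. It then uses $\Gamma$-convergence plus compactness for the degenerately weighted tangential functional. Two routes differ but reach the same estimates. The paper localizes through the energy bound, since $m-m(0)\ge c\ve^{5/8}$ off the strip $r<\ve^{5/8}$, rather than by an Agmon estimate. It gets compactness by box-averaging $\bar\psi_\ve$ together with Proposition~\ref{Prop81}, rather than by a spectral-gap Poincar\'e inequality.

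\textbf{The gap: the order-$\ve$ shift is counted twice.} In your ground-state identity you subtract $\ve^{2/3}\Lambda_\ve$, with $\Lambda_\ve=\Lambda_0+\ve^{1/3}\Lambda_1+\dots$ the half-line eigenvalue, so the shift already sits in $\ve\Lambda_1$. Your identity then leaves only three things: the weighted nonlocal Dirichlet form, the potential $m-m(0)-\alpha x_d$, and a remainder of relative size $o(\ve^{1/3})$. None of these can produce an additional $\alpha q_0\int\psi^2$, so your limit functional counts the shift twice. This is exactly how the paper proceeds: $\Lambda_\ve=\Lambda_0-\alpha q_\ve^*-\ve^{2/3}\hat q_1$ with $q_\ve^*=q_0\ve^{1/3}+\dots$ (see \eqref{asympt_Lambda}) is subtracted inside \eqref{funcF}. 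The $\Gamma$-limit of Theorem~\ref{Th43} then contains no $q_0$ term at all.

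\textbf{The sign has to be derived, not read off.} By Corollary~\ref{cor72}, $U(\xi)=\xi+\tilde\varphi(\xi)$ solves the homogeneous half-line problem and is asymptotic to $\xi+q_0$ with $q_0>0$. Hence the matching outer profile is $\phi_0(q+q_0\ve^{1/3})$, and the effective Dirichlet point lies outside $\Omega$, which lowers the eigenvalue: $\Lambda_1=-\alpha q_0$. Carried out consistently, your argument yields $\ve(\mu^{(k)}-\alpha q_0)$ at order $\ve$. So does the paper's own pair \eqref{funcF}, \eqref{asympt_Lambda}. The ``$+$'' in the displayed formula is inconsistent with that construction.

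\textbf{The accuracy you need is pointwise, not just for $\Lambda_\ve$.} What the reduction requires is a one-sided pointwise bound on the relative residual, $R_\ve\ge -o(\ve^{1/3})$. It must hold on the whole region where you divide by $\phi_\ve$, as in \eqref{Tse_Nashe_Vse}. The paper handles the two ends as follows.
\begin{itemize}
\item Near $q=0$ one has $\phi_\ve\sim\ve^{1/3}$, which forces several inner correctors; the paper goes up to $\ve^{4/3}\varphi_4$.
\item At large $q$ the Airy profile is superexponentially small, so a series $\phi_0+\ve^{1/3}\phi_1+\dots$ gives no relative control. The paper therefore factorizes only on $q\le\ve^{-1/24}$. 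It also adds a ${\rm Bi}$-correction so that $\phi_\ve$ is a supersolution at the cut.
\end{itemize}
If you use the exact half-line eigenfunction instead, you still need this quasimode and supersolution construction. It is what pins down its eigenvalue to $o(\ve^{1/3})$. It also gives the comparability $\phi_\ve(q+h)\ge c\,\phi_\ve(q)$ for $0\le h\le C\ve^{1/3}$, which the compactness step uses.
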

			
Notice that the operator $\mathcal{L}_\ve$, being a compact perturbation of the operator of multiplication by the function $a(x)$, has the essential spectrum $a(\overline{\Omega})$ 
and has a (possibly empty) discrete bounded set of eigenvalues with finite multiplicities in $\mathbb{R}\setminus a(\overline{\Omega})$. 
If there are, say $n$, eigenvalues less than $\min\limits_{\overline{\Omega}} a(x)$ then they can be identified via the Courant minimax principle
\begin{equation*}
	\lambda_\ve^{(k)}=\max\limits_{{\rm codim}E=k-1}
	\min\limits_{\rho\in E\setminus \{0\}}
\frac{1}{\|\rho\|^2_{L^2(\Omega)}}\left(\int\limits_{\mathbb{R}^d} \int\limits_{\mathbb{R}^d}
J\left(\frac{|x-y|^2}{\ve^2}\right)(\rho(x)-\rho(y))^2 \frac{dxdy}{2\ve^d}
+\int\limits_{\mathbb{R}^d} m(x)\rho^2(x)dx\right),
\end{equation*} 	
where the maximum is taken over subspaces $E$ of $L^2(\Omega)$, $\|\,\cdot\,\|_{L^2(\Omega)}$ denotes the standard norm in $L^2(\Omega)$ and functions $\rho\in L^2(\Omega)$ are assummed to be continued in the whole $\mathbb{R}^d$ by setting $\rho(x)=0$ in $\mathbb{R}^d\setminus \Omega$. This variational approach to the spectral problem allows us to employ the asymptotic technique of $\Gamma$-convergence to prove 
Theorem \ref{Th21} and Theorem \ref{Th22}.

\section{Eigenfunction concentration at an interior point}
\label{secInter}

In this section we prove Theorem \ref{Th21}, i.e. we consider the case when $m(x)$($=a(x)-1$) attains its strict global minimum at $x_0=0\in \Omega$, 
and non-degeneracy condition 
\eqref{nond} is satisfied. We guess that eigenfunctions are localized at the scale $\sqrt{\ve}$ and study
 the asymptotic behavior, as $\ve \to 0$, of the following 
rescaled via changing of variables $\xi=x/\sqrt{\ve}$  functionals
 \begin{equation}
F_\ve(\rho_\ve)=\frac{1}{2\ve^{d/2+1}}\int\limits_{\mathbb{R}^d}
\int\limits_{\mathbb{R}^d}
J\left(\frac{|\xi-\tilde \xi|^2}{\ve}\right)
(\rho_\ve(\xi)-\rho_\ve(\tilde \xi))^2d\xi d\tilde \xi
+\frac{1}{\ve}\int\limits_{\mathbb{R}^d} \left(m(\sqrt{\ve}\xi)-m(0)+\ve\right)\rho^2_\ve(\xi)d\xi,
\label{funcFve} 	
\end{equation}
where $\rho_\ve\in L^2(\mathbb{R}^d)$ and vanish on $\mathbb{R}^d\setminus \frac{1}{\sqrt{\ve}}\Omega$.  


We prove the following two theorems Theorem \ref{Th31} and Theorem \ref{Th32} about the $\Gamma$-convergence and the compactness. 
\begin{thm} The functionals $F_\ve(\rho_\ve)$ given by \eqref{funcFve}  $\Gamma$-converge to the functional 
	  \begin{equation}
		F_0(\rho)=\frac{\vartheta}{2d}\int\limits_{\mathbb{R}^d}
		|\nabla \rho(\xi)|^2d\xi+\int\limits_{\mathbb{R}^d}
		\left(1+\frac{1}{2}\partial^2_{x_ix_j} m(0)\xi_i\xi_j\right)\rho^2(\xi)d\xi
		\label{funcF00} 	    
	\end{equation}
	 with respect to the  convergence in $L^2(\mathbb{R}^d)$.
		\label{Th31}
\end{thm}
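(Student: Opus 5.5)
We prove the two $\Gamma$-convergence inequalities separately, splitting $F_\ve$ into the nonlocal (Dirichlet-type) part and the potential part. Write the nonlocal part after the substitution $\tilde\xi=\xi+\sqrt{\ve}\,z$:
$$
E_\ve(\rho)=\frac{1}{2}\int_{\mathbb{R}^d}\int_{\mathbb{R}^d}J(|z|^2)\,\frac{(\rho(\xi+\sqrt{\ve}z)-\rho(\xi))^2}{\ve}\,dz\,d\xi ,
$$
i.e. a difference quotient energy at step $h=\sqrt\ve$. This is exactly the structure treated in \cite{BraPia2021} (critical scaling, with $h$ playing the role of $\ve$). For the potential part, Taylor's formula for $a\in C^2$ together with $\nabla m(0)=0$ gives $\ve^{-1}(m(\sqrt\ve\xi)-m(0))=\frac12\partial^2_{x_ix_j}m(0)\xi_i\xi_j+\bar o(1)|\xi|^2$ locally uniformly in $\xi$. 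The global bound $\ve^{-1}(m(\sqrt\ve\xi)-m(0))\ge c|\xi|^2$ holds on $\frac1{\sqrt\ve}\Omega$, by \eqref{nond}, uniqueness of the minimum and compactness of $\overline\Omega$, since $m(x)-m(0)\ge c|x|^2$ on $\overline\Omega$. The $+\ve$ term contributes exactly $\int\rho^2$.

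\textbf{Lower bound.} Let $\rho_\ve\to\rho$ in $L^2$ with $\liminf F_\ve(\rho_\ve)<\infty$. The potential part is nonnegative with integrand converging locally uniformly, so Fatou's lemma (or weak lower semicontinuity on balls followed by sending the radius to infinity) gives
$$
\liminf\int\bigl(\ve^{-1}(m(\sqrt\ve\xi)-m(0))+1\bigr)\rho_\ve^2\ge\int\bigl(1+\tfrac12\partial^2 m(0)\xi\cdot\xi\bigr)\rho^2 .
$$
For the nonlocal part, use the Fourier transform:
$$
E_\ve(\rho)=\frac12\int|\hat\rho(k)|^2\,\frac{2(1-\hat J_\ve(k))}{\ve}\,dk ,\qquad \hat J_\ve(k)=\int J(|z|^2)\cos(\sqrt\ve\, k\cdot z)\,dz .
$$
The multiplier $\ve^{-1}(1-\cos(\sqrt\ve k\cdot z))$ increases to $\frac12(k\cdot z)^2$ as a lower limit, uniformly on compacts. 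Alternatively, avoid Fourier and mollify: convolve $\rho_\ve$ with a fixed mollifier, apply Jensen's inequality to pass the energy to smooth functions, and use the pointwise limit of difference quotients. Then $\rho\in H^1$ and
$$
\liminf E_\ve(\rho_\ve)\ge\frac12\int J(|z|^2)\int|\nabla\rho\cdot z|^2\,d\xi\,dz=\frac{\vartheta}{2d}\int|\nabla\rho|^2 ,
$$
by isotropy of $J$. Concretely, for the Fourier route, weak convergence $\hat\rho_\ve\to\hat\rho$ in $L^2$ on compact sets of $k$, combined with the uniform convergence of the multiplier on compacts and lower semicontinuity of weighted $L^2$ norms, gives the bound on each ball; then send the radius to infinity.

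\textbf{Upper bound.} By density it suffices to take $\rho\in C_c^\infty(\mathbb{R}^d)$ (with a diagonal argument, since $F_0$ is continuous on $H^1$ with the weight $|\xi|^2$ and $C_c^\infty$ is dense in energy). Take the constant recovery sequence $\rho_\ve=\rho$. Since $0\in\Omega$, $\operatorname{supp}\rho\subset\frac1{\sqrt\ve}\Omega$ for small $\ve$, so the admissibility constraint holds. The potential term converges by uniform Taylor convergence on the support. For the nonlocal term, $(\rho(\xi+\sqrt\ve z)-\rho(\xi))^2/\ve\to(\nabla\rho\cdot z)^2$ pointwise and is dominated by $\|\nabla\rho\|_\infty^2|z|^2\chi(\xi)$. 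Here $\chi$ is the indicator of $\operatorname{supp}\rho+B_{\sqrt\ve|z|}$, which requires care for large $|z|$. The clean way is again Fourier: $\ve^{-1}(1-\cos(\sqrt\ve k\cdot z))\le\frac12|k|^2|z|^2$, so dominated convergence with $\vartheta<\infty$ and $|k|^2|\hat\rho|^2\in L^1$ yields the limit $\frac{\vartheta}{2d}\int|\nabla\rho|^2$.

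\textbf{Main obstacle.} The delicate step is the lower bound for the nonlocal term, because $\rho_\ve$ converges only strongly in $L^2$ while the energy involves difference quotients of all lengths. The Fourier representation reduces this to lower semicontinuity of a quadratic form with a multiplier converging monotonically-in-limit, which should make it routine. The only subtlety is that $J$ has only a second moment, so the expansion $1-\cos t\approx t^2/2$ must be used via Fatou's lemma in $z$ rather than via uniform bounds.
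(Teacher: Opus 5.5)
Your proof is correct, but it takes a different route from the paper, mainly in the $\Gamma$-$\liminf$ step.

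\textbf{The paper's route.} It stays in physical space. For $\rho_0\in C_c^\infty$ it Taylor-expands $\rho_0(\xi+\sqrt\ve\zeta)-\rho_0(\xi)$ to get the $\limsup$. For the $\liminf$ it reduces to smooth limits via Lemma~\ref{EP}, then writes $E_\ve(\rho_\ve)=E_\ve(\rho_\ve-\rho_0)+E_\ve(\rho_0)+2\langle A_\ve\rho_0,\rho_\ve-\rho_0\rangle$. The first term is nonnegative and is dropped. The cross term vanishes because $A_\ve\rho_0$, the nonlocal operator applied to the smooth $\rho_0$, stays bounded in $L^2$.

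\textbf{Your route.} You diagonalize the translation-invariant form by Plancherel. Both inequalities then reduce to two facts about the symbol $M_\ve(k)=\frac{2}{\ve}\int J(|z|^2)\bigl(1-\cos(\sqrt\ve\,k\cdot z)\bigr)dz$: first, $0\le M_\ve(k)\le\frac{\vartheta}{d}|k|^2$; second, $M_\ve(k)\to\frac{\vartheta}{d}|k|^2$ locally uniformly in $k$, by dominated convergence in $z$ using only $\vartheta<\infty$.

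\textbf{What your route buys.}
\begin{itemize}
\item The lower bound holds for an arbitrary $L^2$ limit and yields $\rho\in H^1$ by itself. The paper's reduction through Lemma~\ref{EP} needs to know beforehand that $\rho_0\in H^1$, which comes from Theorem~\ref{Th32}.
\item You obtain the optimal bound $E_\ve(\rho)\le\frac{\vartheta}{2d}\|\nabla\rho\|_{L^2}^2$, instead of the constant $\frac{\vartheta}{2}$ in Lemma~\ref{EP}.
\item You avoid Taylor remainders. The $O(\sqrt\ve)$ rate stated in the paper tacitly uses moments of $J$ beyond the second; with only a second moment one gets $o(1)$, which still suffices.
\end{itemize}

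\textbf{What the paper's route buys.} Robustness. Its real-space quadratic splitting is the template reused in Section~\ref{lowerboundineq}. There the weight $\phi_\ve$, the Jacobian $j_\ve$ and the curvilinear coordinates destroy translation invariance, so no Fourier diagonalization is available.

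\textbf{Two small corrections.} The pointwise integrand $\ve^{-1}\bigl(1-\cos(\sqrt\ve\,k\cdot z)\bigr)$ is not monotone in $\ve$; what you actually use is that it is at most $\frac12(k\cdot z)^2$ and converges to it. Also, Plancherel gives strong, not merely weak, convergence $\hat\rho_\ve\to\hat\rho$ in $L^2$, up to the factor $(2\pi)^{-d}$ that you dropped.
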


\begin{thm} Assume that $F_\ve(\rho_\ve)\leq C$. Then functions $\rho_\ve(\xi)$ converge strongly in $L^2(\mathbb{R}^d)$, up to a subsequence, to a function $\rho_0\in H^1(\mathbb{R}^d)$ and such that $F_0(\rho_0)<\infty$.
	\label{Th32}
\end{thm}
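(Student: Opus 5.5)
We must prove Theorem \ref{Th32}: the bound $F_\ve(\rho_\ve)\le C$ should give (i) a uniform $L^2$ bound together with tightness, (ii) local compactness of translates, and (iii) an $H^1$ limit with $F_0(\rho_0)<\infty$.

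\textbf{Step 1: uniform bounds and tightness.} Both terms of $F_\ve$ are nonnegative, because $m(\sqrt\ve\xi)\ge m(0)$ on $\frac{1}{\sqrt\ve}\Omega$ and $\rho_\ve$ vanishes elsewhere. The potential term is bounded below by $\int(1+\frac1\ve(m(\sqrt\ve\xi)-m(0)))\rho_\ve^2$, so $\|\rho_\ve\|_{L^2}^2\le C$. The minimum at $0$ is unique and nondegenerate by \eqref{nond}, and $\overline\Omega$ is compact. Hence there is $c>0$ with $m(x)-m(0)\ge c\min(|x|^2,1)$ on $\overline\Omega$, which gives $\frac1\ve(m(\sqrt\ve\xi)-m(0))\ge c\min(|\xi|^2,1/\ve)$. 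It follows that
\[
\int_{|\xi|>R}\rho_\ve^2\,d\xi\le \frac{C}{c\min(R^2,1/\ve)},
\]
which is uniformly small for large $R$ and small $\ve$. So the family is tight, and it remains to prove precompactness in $L^2_{loc}$.

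\textbf{Step 2: control of translations (main step).} Set $\delta=\sqrt\ve$. The nonlocal term is
\[
\frac{1}{2\delta^{d+2}}\iint J\Big(\frac{|\xi-\tilde\xi|^2}{\delta^2}\Big)(\rho_\ve(\xi)-\rho_\ve(\tilde\xi))^2\,d\xi\, d\tilde\xi .
\]
This is exactly the critical-scaling ($\kappa=d+2$) Dirichlet-type energy at scale $\delta$. Since $J$ is continuous with $J(0)>0$, there are $r_0,c_0>0$ with $J(s)\ge c_0$ for $s\le r_0^2$. Therefore
\[
\int\!\!\int_{|h|\le r_0\delta}|\rho_\ve(\xi+h)-\rho_\ve(\xi)|^2\,dh\,d\xi\le C\delta^{d+2}.
\]
I would then use the standard chaining argument. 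Write a shift $h$ with $|h|\le r_0 N\delta$ as a sum of $N$ shifts, each of length at most $r_0\delta$, and average over the intermediate points. This gives
\[
\|\rho_\ve(\cdot+h)-\rho_\ve\|_{L^2}^2\le C\big(|h|^2+\delta^2\big)\quad\text{for all }h,
\]
which is the discrete $H^1$ estimate of Bourgain--Brezis--Mironescu / Ponce type. Equivalently, one can mollify: $\tilde\rho_\ve=\rho_\ve*\eta_\delta$ satisfies $\|\nabla\tilde\rho_\ve\|_{L^2}\le C$ and $\|\tilde\rho_\ve-\rho_\ve\|_{L^2}\le C\delta$. I expect this step to carry the technical weight, mainly the careful averaging over intermediate points in the chaining. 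Combined with tightness, the Fréchet--Kolmogorov criterion gives strong $L^2(\mathbb R^d)$ precompactness.

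\textbf{Step 3: regularity of the limit.} Let $\rho_\ve\to\rho_0$ in $L^2$ along a subsequence. Then $\tilde\rho_\ve\to\rho_0$ as well, and $\nabla\tilde\rho_\ve$ is bounded in $L^2$, so $\rho_0\in H^1(\mathbb R^d)$ by weak compactness. For the potential part, Fatou's lemma together with the pointwise convergence
\[
\frac1\ve\big(m(\sqrt\ve\xi)-m(0)\big)\to\tfrac12\partial^2_{x_ix_j}m(0)\,\xi_i\xi_j,
\]
which holds because $0\in\Omega$ is interior and $m\in C^2$, shows that $\int\partial^2_{x_ix_j}m(0)\xi_i\xi_j\,\rho_0^2<\infty$. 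Hence $F_0(\rho_0)<\infty$. Alternatively, this last conclusion is immediate from the $\Gamma$-liminf inequality of Theorem \ref{Th31}.
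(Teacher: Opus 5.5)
Your proof is correct and has the same structure as the paper's: tightness from the potential term, which the paper records as $c\int(1+|\xi|^2)\rho_\ve^2\,d\xi\le C$ (equivalent to your $\min(|\xi|^2,1/\ve)$ bound since $\Omega$ is bounded), combined with local compactness coming from the critical-scaling convolution energy. The only difference is that the paper imports the local compactness from Lemma 3.5 of \cite{PiaRyb2024}, while you prove it directly via the translation/mollification estimate, which is a valid self-contained substitute; the mollified version also cleanly absorbs the $\delta^2$ term in the Fr\'echet--Kolmogorov step.
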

\noindent Combining these results, and using the Courant minimax principle we establish the statement of Theorem \ref{Th21}.

The proof of Theorem \ref{Th32} follows the lines of Lemma 3.5 in \cite{PiaRyb2024}. While that Lemma deals with the case of a 
fixed bounded domain, the domains $\frac{1}{\sqrt{\ve}}\Omega$ expand to the whole space  $\mathbb{R}^d$. However, this is compensated by the second term in \eqref{funcFve} that provides the following bound
\begin{equation*}
c\int\limits_{\mathbb{R}^d}\left(1+|\xi|^2\right) \rho^2_\ve(\xi)d\xi\leq
\frac{1}{\ve}\int\limits_{\mathbb{R}^d} \left(m(\sqrt{\ve}\xi)-m(0)+\ve\right)\rho^2_\ve(\xi)d\xi
\leq C,
\end{equation*}
where $c>0$ and $C$ are independent of $\ve$.

Next we proceed with the
\begin{proof}[Proof of Theorem \ref{Th31}]
	First we show the $\Gamma$-$\limsup$ inequality, i.e. for any $\rho_0\in H^1(\mathbb{R}^d)$ there exists a sequence of functions  $\rho_\ve$ vanishing on $\mathbb{R}^d\setminus \frac{1}{\sqrt{\ve}}\Omega$, converging to $\rho_0$ in  $L^2(\mathbb{R}^d)$, and such that
		\begin{equation}
		\lim_{\ve\to 0} F_\ve(\rho_\ve)= F_0(\rho_0). 
		\label{3.7}\quad 
	\end{equation}
It sufficies to prove \eqref{3.7} in the case when $\rho_0\in C_c^\infty(\mathbb{R}^d)$ thanks to 

	\begin{lem} 
	For any 
	function $\rho_\ve\in H^1_0\big(\frac{1}{\sqrt{\ve}}\Omega\big)$
the following bound holds  
	\begin{equation*}
		\begin{aligned}	
		\frac{1}{2\ve^{d/2+1}} \int\limits_{\mathbb{R}^d}
			\int\limits_{\mathbb{R}^d}
			J\left(\frac{|\xi-\tilde \xi|^2}{\ve}\right)
			(\rho_\ve(\xi)-\rho_\ve(\tilde \xi))^2d\xi d\tilde \xi \, \leq \, \frac{\vartheta}{2}\int\limits_{\mathbb{R}^d} |\nabla\rho_\ve(\xi)|^2 \, d \xi.
	\end{aligned}
	\end{equation*}
	\label{EP}  
\end{lem}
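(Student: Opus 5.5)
Since $\rho_\ve\in H^1_0\big(\frac{1}{\sqrt{\ve}}\Omega\big)$, we extend it by zero and regard it as an element of $H^1(\mathbb{R}^d)$. The plan is a standard difference-quotient estimate. First, by density of $C_c^\infty(\mathbb{R}^d)$ in $H^1(\mathbb{R}^d)$ and continuity of both sides in the $H^1$ norm, we may take $\rho_\ve$ smooth with compact support. For the left-hand side this continuity follows from the bound itself once it is proved for smooth functions, applied to differences.

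Next we substitute $\tilde\xi=\xi+\sqrt{\ve}\,z$, so that $d\tilde\xi=\ve^{d/2}dz$. The left-hand side becomes
\begin{equation*}
\frac{1}{2\ve}\int\limits_{\mathbb{R}^d} J(|z|^2)\int\limits_{\mathbb{R}^d}\big(\rho_\ve(\xi+\sqrt{\ve}z)-\rho_\ve(\xi)\big)^2 d\xi\, dz .
\end{equation*}
For fixed $z$ we write $\rho_\ve(\xi+\sqrt{\ve}z)-\rho_\ve(\xi)=\int_0^1\sqrt{\ve}\,z\cdot\nabla\rho_\ve(\xi+t\sqrt{\ve}z)\,dt$. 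By Cauchy--Schwarz in $t$,
\begin{equation*}
\big(\rho_\ve(\xi+\sqrt{\ve}z)-\rho_\ve(\xi)\big)^2\le \ve|z|^2\int_0^1|\nabla\rho_\ve(\xi+t\sqrt{\ve}z)|^2dt .
\end{equation*}
Integrating in $\xi$ and using translation invariance of Lebesgue measure gives $\int(\rho_\ve(\xi+\sqrt{\ve}z)-\rho_\ve(\xi))^2d\xi\le \ve|z|^2\|\nabla\rho_\ve\|^2_{L^2}$.

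Plugging this in, the factor $\ve$ cancels. Using $\vartheta=\int J(|z|^2)|z|^2dz$ from \eqref{umova_try}, we obtain the bound $\frac{1}{2}\vartheta\|\nabla\rho_\ve\|^2_{L^2(\mathbb{R}^d)}$, which is exactly the claim.

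The only delicate point is justifying the passage from smooth functions to general $H^1_0$ functions. This is handled either by Fatou's lemma along an approximating sequence, or directly by the $L^2$-continuity of translations together with the identity $\|\tau_h\rho-\rho\|_{L^2}\le |h|\,\|\nabla\rho\|_{L^2}$, which holds for all $H^1(\mathbb{R}^d)$ functions. Hence no real obstacle is expected.

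We also note that a sharper constant $\vartheta/(2d)$ is available via Fourier transform and isotropy of $J$, since $|\hat\rho(k)|^2|e^{i\sqrt{\ve}z\cdot k}-1|^2\le \ve|z\cdot k|^2|\hat\rho(k)|^2$ and averaging $(z\cdot k)^2$ over directions gives $|z|^2|k|^2/d$. The stated bound does not require this refinement.
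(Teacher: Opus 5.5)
Your proof is correct and is essentially the paper's argument: the substitution $\tilde\xi=\xi+\sqrt{\ve}\,z$, the fundamental theorem of calculus along the segment followed by Cauchy--Schwarz in $t$, and translation invariance in $\xi$ to arrive at $\frac{\vartheta}{2}\|\nabla\rho_\ve\|^2_{L^2}$. You also write the integrand correctly as $\sqrt{\ve}\,z\cdot\nabla\rho_\ve$, where the paper's displayed formula has a typo. Your density step is fine and in fact trivial, since for fixed $\ve$ the left-hand side is at most $\frac{2}{\ve}\|\rho_\ve\|^2_{L^2}$ and hence is $L^2$-continuous. Your remark that the sharp constant is $\vartheta/(2d)$ is correct and matches the coefficient in the $\Gamma$-limit $F_0$.
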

\begin{proof}
First we perform the change of variables $\xi-\tilde \xi=\sqrt{\ve}\tilde\zeta$ and get
	\begin{equation}
	\begin{aligned}	
		\frac{1}{2\ve^{d/2+1}}&\int\limits_{\mathbb{R}^d}
		\int\limits_{\mathbb{R}^d}
		J\left(\frac{|\xi-\tilde \xi|^2}{\ve}\right)
		(\rho_\ve(\xi)-\rho_\ve(\tilde \xi))^2 d\xi d\tilde \xi \\
		&=\frac{1}{2\ve}\int\limits_{\mathbb{R}^d} d\tilde\xi 
		\int\limits_{\mathbb{R}^d}
		J\left(|\tilde \zeta|^2\right)
		(\rho_\ve(\tilde\xi+\sqrt{\ve}\tilde\zeta)-\rho_\ve(\tilde \xi))^2	d\tilde \zeta. 
		\label{3.6} 
	\end{aligned}
\end{equation}
Representing $\rho_\ve(\tilde\xi+\sqrt{\ve}\tilde\zeta)-\rho_\ve(\tilde \xi)=
\sqrt{\ve}\int\limits_0^1|\nabla\rho_\ve(\tilde\xi+\sqrt{\ve}t\tilde\zeta)|^2 \, dt $
 and applying the Cauchy-Schwarz inequality we obtain   
$|\rho_\ve(\tilde\xi+\sqrt{\ve}\tilde\zeta)-\rho_\ve(\tilde \xi)|^2\leq
\ve |\tilde \zeta|^2 \int\limits_0^1|\nabla\rho_\ve(\tilde\xi+\sqrt{\ve}t\tilde\zeta)|^2 \, dt $. 
Then we estimate the right-hand side of \eqref{3.6} as follows 
\begin{equation*}
	\begin{aligned}
		\frac{1}{2\ve}&\int\limits_{\mathbb{R}^d} d\tilde\xi 
		\int\limits_{\mathbb{R}^d}
		J\left(|\tilde \zeta|^2\right)
		(\rho_\ve(\tilde\xi+\sqrt{\ve}\tilde\zeta)-\rho_\ve(\tilde \xi))^2	d\tilde \zeta \\
	&	\leq	
		\frac{1}{2}\int\limits_0^1 dt 
		\int\limits_{\mathbb{R}^d}
J\left(|\tilde\zeta|^2\right) |\tilde \zeta|^2 d\tilde \zeta 
\int\limits_{\mathbb{R}^d} |\nabla\rho_\ve(\tilde\xi+\sqrt{\ve}t\tilde\zeta)|^2 \, d\tilde \xi=	\frac{\vartheta}{2}
\int\limits_{\mathbb{R}^d} |\nabla\rho_\ve(\xi)|^2 \, d \xi.
\end{aligned}
\end{equation*}
	\end{proof} 

\noindent
{\it Proof of Theorem \ref{Th31} (continued).} Given $\rho_0\in C_c^\infty(\mathbb{R}^d)$, 
we set $\rho_\ve(\xi)=\rho_0(\xi)$ when $\xi\in \frac{1}{\sqrt{\ve}}\Omega $ and $\rho_\ve(\xi)=0$ otherwise. Notice that for sufficiently small $\ve$ we have $\rho_\ve(\xi)=\rho_0(\xi)$ on $\mathbb{R}^d$, then
introducing $\zeta=\frac{1}{\sqrt{\ve}}(\tilde\xi-\xi)$ and expanding  
\begin{equation*}
	\rho_0(\tilde\xi)-\rho_0(\xi)=\rho_0(\xi+\sqrt{\ve}\zeta)-\rho_0(\xi)=\sqrt{\ve}\nabla\rho_0(\xi)\cdot \zeta +
\ve\int\limits_0^1	
	\partial^2_{\xi_j \xi_k}  \rho_0(\xi+\sqrt{\ve}t\zeta)\zeta_j\zeta_k(1-t)dt
\end{equation*}	
 we derive the asymptotic formula for the first term of the functional \eqref{funcFve} 
	\begin{equation*}
	\begin{aligned}	
		\frac{1}{2\ve^{d/2+1}}&\int\limits_{\mathbb{R}^d}
		\int\limits_{\mathbb{R}^d}
		J\left(\frac{|\xi-\tilde \xi|^2}{\ve}\right)
		(\rho_\ve(\xi)-\rho_\ve(\tilde \xi))^2 d\xi d\tilde \xi \\ 
		& = 	\frac{1}{2d}\int\limits_{\mathbb{R}^d}
		J\left(|\zeta|^2)\right)|\zeta|^2 d\zeta 
		 	\int\limits_{\mathbb{R}^d}|\nabla\rho_0(\xi)|^2
		 	d\xi +O(\sqrt{\ve}).
	\end{aligned}
\end{equation*}
The limit transition in second term of  \eqref{funcFve} is straightforward. 

It remains to demonstrate the $\Gamma$-$\liminf$ inequality, i.e. 
\begin{equation}
	\liminf_{\ve\to 0} F_\ve(\rho_\ve)\geq F_0(\rho_0), \quad \text{for any } \rho_\ve\to \rho_0\ \text{in}
	\ L^2(\mathbb{R}^d), \text{as} \  \ve\to 0.
	\label{3.12}
\end{equation}
Thanks to Lemma \ref{EP} it is sufficient to establish \eqref{3.12} for $\rho_0\in C_c^\infty(\mathbb{R}^d)$. 
To this end we write
	\begin{equation}
	\begin{aligned}	
	\frac{1}{2\ve^{d/2+1}}&\int\limits_{\mathbb{R}^d}
\int\limits_{\mathbb{R}^d}
J\left(\frac{|\xi-\tilde \xi|^2}{\ve}\right)
(\rho_\ve(\xi)-\rho_\ve(\tilde \xi))^2 d\xi d\tilde \xi \\ 
& =	\frac{1}{2\ve^{d/2+1}}\int\limits_{\mathbb{R}^d}
\int\limits_{\mathbb{R}^d}
J\left(\frac{|\xi-\tilde \xi|^2}{\ve}\right)
(\rho_\ve(\xi)-\rho_0(\xi)+\rho_0(\tilde\xi)-\rho_\ve(\tilde \xi))^2 d\xi d\tilde \xi \\ 
& +		\frac{1}{2\ve^{d/2+1}}\int\limits_{\mathbb{R}^d}
\int\limits_{\mathbb{R}^d}
J\left(\frac{|\xi-\tilde \xi|^2}{\ve}\right)
\big(\rho_0(\xi)-\rho_0(\tilde\xi)\big)^2 d\xi d\tilde \xi\\
&+	\frac{2}{\ve^{d/2+1}}\int\limits_{\mathbb{R}^d}
\int\limits_{\mathbb{R}^d}
J\left(\frac{|\xi-\tilde \xi|^2}{\ve}\right)
\big(\rho_0(\xi)-\rho_0(\tilde\xi)\big)d\tilde \xi \,  \big(\rho_\ve(\xi)-\rho_0(\xi)\big)d\xi.
\label{3.14}
	\end{aligned}
\end{equation}
The first term in the right hand side of \eqref{3.14} is non-negative; the second term has already been dealt with in the proof of $\Gamma$-$\limsup$ inequality; finally, the last term tends to zero since $\rho_\ve-\rho_0\to 0$ in $L^2(\mathbb{R}^d)$ while 	$\frac{2}{\ve^{d/2+1}}
\int\limits_{\mathbb{R}^d}
J\left(\frac{|\xi-\tilde \xi|^2}{\ve}\right)
(\rho_0(\xi)-\rho_0(\tilde\xi))d\tilde \xi$ remains bounded in $L^2(\mathbb{R}^d)$  as $\ve\to 0$. Indeed, 
arguing as in the proof of $\Gamma$-$\limsup$ inequality, we can write 
	\begin{equation*}
		\frac{2}{\ve^{d/2+1}}\int\limits_{\mathbb{R}^d}
		J\left(\frac{|\xi-\tilde \xi|^2}{\ve}\right)
		\big(\rho_0(\xi)-\rho_0(\tilde\xi)\big)d\tilde \xi
			=-2 \int\limits_0^1	\int\limits_{\mathbb{R}^d} 
		J\left(| \zeta|^2\right) 	\partial^2_{\xi_j \xi_k}  \rho_0(\xi+\sqrt{\ve}t\zeta)\zeta_j\zeta_k
		d\zeta dt.
\end{equation*}
Then the sought boundedness follows from the facts that 
$\int\limits_{\mathbb{R}^d} J(|\zeta^2) |\zeta|^2  d\zeta< \infty$ and $\rho_0\in C_c^\infty(\mathbb{R}^d)$.

\end{proof}

\section{Eigenfunction concentration at a point on the boundary}
\label{SamaSamaSecttsiya}

This section is devoted to the proof of Theorem \ref{Th22}. Since the 
minimum $x_0=0$ of $ m(x)$ is now assumed to lie on the boundary, it is convenient to introduce local coordinates near the boundary.
For simplicity, we consider hereafter only the case $d=2$ while the argument in the case of higher dimensions is analogous (but much more cumbersome).


  Let $r$ be the signed distance from the point $x$ to the boundary $\partial \Omega$, positive when $x\in\Omega$, and $s$ be the length of the clockwise-oriented arc connecting the origin and the projection of $x$ on $\partial \Omega$. 
  Then, since the boundary is assumed to be $C^3$-smooth, the correspondence $x=X(r,s)$ is (locally) a $C^2$-diffeomorphism.
  
  In order to prove Theorem \ref{Th22} 
  we now consider the following functionals 
     \begin{equation}
     	\begin{aligned}
     		F_\ve(\rho_\ve)=\frac{1}{\ve^{13/6}}&\left(	\frac{1}{2\ve^2} \int\limits_{\mathbb{R}^2} J\left(\frac{|x-y|^2}{\ve^2}\right)(\rho_\ve(x)-\rho_\ve(y))^2dxdy \right.\\    
     		&\left. \quad + \int\limits_{\Omega}(m(x)-m(0)-\ve^{2/3}\Lambda_\ve+\ve)\rho^2_\ve(x)dx \right),
     		\label{funcF}
     	\end{aligned}
     \end{equation}
where $\Lambda_\ve$ is given by \eqref{asympt_Lambda}.
  The heuristics behind considering such an energy is as follows. 
  We propose the ansatz  for the first eigenfunctions in the form $\rho_\ve=\phi_\ve(r/\ve^{2/3})\psi_\ve(s/\ve^{1/2})$, with separated variables and two fine scales. 
 Substituting $\rho_\ve$ with frozen variable $s$ (corresponding to the  coarser scale) in \eqref{start_eq}--\eqref{main_operator_locper}  and keeping one term of the Taylor expansion $m(x)=m(0)+\alpha r+\dots$ we obtain the one-dimensional equation for $\phi_\ve$ and the corresponding eigenvalue $\Lambda_\ve$ that appears in \eqref{funcF} (see \eqref{eq_Lambda_ve}). The factor $\frac{1}{\ve^{13/6}}$ in \eqref{funcF} is chosen so that the Rayleigh quotient for $F_\ve$ matches the order of the last term in the expansion of $\lambda_\ve$. 
 
  We show the following compactness theorem. 
  \begin{thm}
  Assume that $F_\ve(\rho_\ve)\leq C$. Then 
\begin{itemize}  
	\item[(i)] as $\ve\to 0$,
 \begin{equation*}
   	\frac{1}{\ve^{7/6}}\left(
   	\int\limits_{\Omega_\ve} \rho_\ve^2 dx+	\int\limits_0^{\frac{1}{4}\ve^{5/8}}\int\limits_{|s|>\frac{1}{4}\ve^{11/24}} \rho_\ve^2 (X(r,s)) (1-r\kappa(s)) ds dr
   	   	\right)\to 0,
   \end{equation*}
where $\Omega_\ve=\{x\in\Omega: {\rm dist}(x,\partial\Omega)\geq \frac{1}{4}\ve^{5/8}\}$, $\kappa(s)$ is the curvature of $\partial \Omega $ at the point $X(0,s)$;
\item[(ii)] the restriction of $ \rho_\ve\left(X(\ve^{2/3}q,\ve^{1/2}t)\right)$ to the set  $0<q<\frac{1}{4}\ve^{-1/24}$, $-\frac{1}{4}\ve^{-1/24}<t<\frac{1}{4}\ve^{-1/24}$ continued by zero elsewhere, converge, up to a subsequence, 
strongly in $L^2(\mathbb{R}_+\times \mathbb{R})$ to a function of the form $\phi_0(q)\psi(t)$, where $\phi_0$ is the positive solution of \eqref{Airy_sln} with the Dirichlet condition $\phi_0(0)=0$ normalized by $\int\limits_0^\infty\phi_0^2(q)dq=1$, and $\psi\in H^1(\mathbb{R})$.
\end{itemize} 
\label{Th41}
  \end{thm}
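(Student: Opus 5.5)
We take the ansatz from the heuristics above as the guide. First we establish the one-dimensional boundary layer facts. Averaging $J$ over the tangential variable gives a one-dimensional kernel $\hat J(z)=\int J(z^2+w^2)\,dw$. On $(0,\infty)$ we consider the half-line problem
\begin{equation*}
\frac{1}{\ve^{2}}\int\!\!\int \hat J\Big(\frac{r-r'}{\ve}\Big)\frac{(\phi(r)-\phi(r'))^2}{2}\,dr\,dr'+\int(\alpha r-\ve^{2/3}\Lambda)\phi^2dr.
\end{equation*}
Here $\phi$ is extended by zero for $r<0$. We take its principal eigenpair $(\Lambda_\ve,\phi_\ve)$, with $\Lambda_\ve\to\Lambda_0$, and we assume, as the paper indicates, that $\phi_\ve(q)=\phi_\ve(r/\ve^{2/3})$ converges to the Airy profile $\phi_0$. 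The key consequence is a \emph{ground-state factorization}. Write $\rho_\ve=\phi_\ve(r/\ve^{2/3})\,w_\ve(x)$ near the boundary point. Using the eigen-equation for $\phi_\ve$ and the identity $(\phi w-\phi' w')^2=\phi\phi'(w-w')^2+(\phi-\phi')(\phi w^2-\phi'w'^2)$, the quadratic form becomes a weighted Dirichlet-type form
\begin{equation*}
\frac{1}{2\ve^2}\int\!\!\int J\,\phi_\ve\phi_\ve'\,(w(x)-w(y))^2 .
\end{equation*}
To this we add a potential $\big(m(x)-m(0)-\alpha r+\ve\big)\phi_\ve^2w^2$ plus curvature corrections coming from the Jacobian $1-r\kappa(s)$ and from the mismatch between $\hat J$ and $J$ in curvilinear coordinates. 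These corrections are of order $\ve\,\phi_\ve^2 w^2$ and are absorbed using \eqref{2.10}.

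For part (i) we use localization. Away from the boundary, where $r\ge \frac14\ve^{5/8}$, we have $m(x)-m(0)\ge c\,\ve^{5/8}\gg \ve^{2/3}$. A cutoff argument then gives $\ve^{-13/6}\cdot c\ve^{5/8}\int_{\Omega_\ve}\rho_\ve^2\le C$, so $\int_{\Omega_\ve}\rho^2_\ve\le C\ve^{13/6-5/8}=o(\ve^{7/6})$. The nonlocal term is handled by an IMS-type localization formula, whose error is $O(\ve^2/\delta^2)$ times the mass for a cutoff at scale $\delta$, and this is harmless. In the tangential strip $|s|>\frac14\ve^{11/24}$ with $r$ small, \eqref{2.10} together with the factorized form gives the potential lower bound
\begin{equation*}
m-m(0)-\alpha r-\ve^{2/3}\Lambda_0 \ge c s^2-\ldots\ge c\ve^{11/12}\gg \ve .
\end{equation*}
This yields $\int\rho_\ve^2\lesssim \ve^{13/6-11/12}=\ve^{5/4}=o(\ve^{7/6})$. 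For this step we must first control the normal-direction energy below by $\ve^{2/3}\Lambda_\ve$ locally in $s$, which is the 1D eigenvalue inequality applied fibrewise.

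For part (ii), rescale with $q=r/\ve^{2/3}$, $t=s/\ve^{1/2}$ and set $\tilde\rho_\ve(q,t)=\ve^{7/12}\rho_\ve$, so that the $L^2$ norm is preserved up to the normalization $\ve^{7/6}$. The bound $F_\ve\le C$ controls three quantities:
\begin{itemize}
\item[(a)] the normal excess energy, i.e. the 1D quadratic form minus $\Lambda_\ve$ times the mass, divided by $\ve^{2/3}$, which is bounded by $C\ve^{1/3}$ relative to the mass;
\item[(b)] a tangential nonlocal difference quotient, bounded uniformly on the $\ve^{1/2}$ scale, giving the $H^1$-type bound $\int|\partial_t|^2$ in the limit as in Theorem~\ref{Th32};
\item[(c)] the weight $(1+t^2+q)$.
\end{itemize}
From (a) and the spectral gap of the 1D problem, the component of $\tilde\rho_\ve(\cdot,t)$ orthogonal to $\phi_\ve$ is $o(1)$ in $L^2$. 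Hence $\tilde\rho_\ve\approx \phi_\ve(q)\psi_\ve(t)$ with $\psi_\ve(t)=\int\tilde\rho_\ve\phi_\ve\,dq$. From (b), and using a Kolmogorov–Riesz argument as in \cite{PiaRyb2024} Lemma~3.5, the $\psi_\ve$ are precompact in $L^2(\mathbb{R})$: the weight $t^2$ gives tightness and the difference-quotient bound gives equicontinuity. The limit $\psi$ lies in $H^1$. Since $\phi_\ve\to\phi_0$ strongly, the strong convergence to $\phi_0\psi$ follows.

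The main obstacle is (b), namely extracting tangential compactness from the weighted form with weight $\phi_\ve(q)\phi_\ve(q')$. This weight degenerates as $q\to0$ and as $q\to\infty$, and the interaction range in $q$, of order $\ve^{1/3}$, is tiny compared with that in $t$. The approach is to integrate the weighted form against $q$ and use $\int\phi_\ve\phi_\ve(\cdot+h)\,dq\ge c>0$ for small $h$. This bounds the tangential differences of $\psi_\ve$ by Cauchy–Schwarz, with the error from the non-factorized part controlled by (a).
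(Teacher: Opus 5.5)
\textbf{There is a genuine gap. The quantitative ground-state estimate on which both parts of your argument rest is never supplied, and the step you use in its place fails.}

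Your factorization identity is the paper's (\eqref{Phi_ve}--\eqref{Psi_ve}). After the substitution, the coefficient of $\phi_\ve^2\psi_\ve^2$ must be at least $\ve+cs^2-o(\ve)$ \emph{pointwise} on the layer, because at $s=0$ the term $+\ve$ is the only source of coercivity in $F_\ve$. In the paper this is Lemma~\ref{normal_factor}, with $\ve^{2/3}R_\ve\ge -C(\ve^{7/6}+\ve|s|)$.

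Your $\phi_\ve$ is the ground state of the \emph{flat} problem with kernel $\hat J=I_0$. The curvilinear kernel differs from it by the term $\frac{\kappa(s)(r-\tilde r)}{2(1-\tilde r\kappa(s))}I_0$ in \eqref{nu_nu}, which is a nonlocal version of the drift $-\kappa\partial_r$ in the Laplacian. Applied to $\phi_\ve$ and divided by $\phi_\ve$, it adds roughly $\frac{\theta\kappa}{2}\ve^2\partial_r\phi_\ve/\phi_\ve$ to the potential. In the nonlocal layer $r\lesssim\ve$ (i.e. $q\lesssim\ve^{1/3}$), $\phi_\ve\approx\ve^{1/3}\phi_0'(0)(\xi+\tilde\varphi(\xi))$ varies on the scale $\ve$, so this term is $\ve$ times an $O(1)$ factor whose sign depends on $\kappa(0)$.

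So you are right that the correction is of order $\ve\,\phi_\ve^2w^2$, but that is exactly the problem. It competes with $+\ve$ at $s=0$, where \eqref{2.10} provides nothing, so ``absorbed using \eqref{2.10}'' fails. The paper avoids this by building $\kappa(0)$ into $I_\ve$ and carrying the boundary-layer expansion up to $\varphi_4$. Only an $O(\ve^{1/3}|s|)$ mismatch is then left for \eqref{2.10}. Without that, you would need an integrated Hardy-type absorption into the normal part of the weighted form, which you do not give.

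For the same reason, the qualitative facts $\Lambda_\ve\to\Lambda_0$ and $\phi_\ve\to\phi_0$ that you assume are not enough. $F_\ve$ contains the specific $\Lambda_\ve$ of \eqref{asympt_Lambda}. Coercivity requires your 1D eigenvalue to match it up to $o(\ve^{1/3})$, including the shift $-\alpha q_0\ve^{1/3}$ produced by the nonlocal boundary layer (Section~\ref{BLS}, Corollary~\ref{cor72}).

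Part (i) depends on the same estimate. The IMS error at $\delta\sim\ve^{5/8}$ is $\ve^{3/4}\gg\ve$ times the mass in the transition region. On the boundary piece, the factorized potential there is only of size $\ve$, because $\alpha r$ has been spent on the ground state. Calling the error ``harmless'' therefore requires a confinement estimate that you do not give.

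Step (a) presupposes a fibred structure that is not there. After factorization the form is $\iint J\,\phi_\ve(q)\phi_\ve(\tilde q)\bigl(\psi_\ve(q,t)-\psi_\ve(\tilde q,\tilde t)\bigr)^2$, which controls only \emph{joint} increments in $(q,t)$. Without a further decoupling argument, there is no normal form of $\tilde\rho_\ve(\cdot,t)$ at fixed $t$ to which a 1D spectral gap could be applied. Moreover, a spectral gap uniform in $\ve$ for the nonlocal 1D operator, i.e. a nonlocal Poincar\'e inequality with the degenerate weight $\phi_\ve\phi_\ve$, is itself a nontrivial 1D version of the compactness being proved. Since your (b) controls its errors ``by (a)'', the obstacle you correctly single out is not actually overcome.

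The paper's decoupling device is what is missing. It selects shifts $h_\ve^\pm,l_\ve^\pm$ by averaging (\eqref{8.8}--\eqref{8.9}) and passes to rectangle averages $\bar\psi_\ve$, which gives \emph{separate} bounds \eqref{phipsi}, \eqref{boundq}, \eqref{boundt}. Proposition~\ref{Prop81} then gives confinement in $q$, and Rellich gives compactness of $\phi_0\bar\psi_\ve$. Finally, $\int\phi_\ve^2|\partial_q\bar\psi_\ve|^2\le C\ve^{1/3}$ forces the limit to be $\phi_0(q)\psi(t)$, with no spectral gap used. Once such decoupled bounds and a weighted Poincar\'e inequality are in hand, your projection $\psi_\ve(t)=\int\tilde\rho_\ve\phi_\ve\,dq$ with Kolmogorov--Riesz would be a legitimate alternative endgame.

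Two slips. In (i), the term $-\ve^{2/3}\Lambda_0$ should have been cancelled by the factorization; as written the inequality is false, since $\ve^{11/12}\ll\ve^{2/3}$. Also, no factor $\ve^{7/12}$ belongs in $\tilde\rho_\ve$: already $\int\rho_\ve^2\,dx\approx\ve^{7/6}\int\rho_\ve^2\,dq\,dt$, so your $\tilde\rho_\ve\to0$ trivially.
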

  
  \begin{rem}
Combining statements $(i)$ and $(ii)$ 
of Theorem \ref{Th41}, we  get
 \begin{equation*}
	\frac{1}{\ve^{7/6}}\|\rho_\ve\|_{L^2(\Omega)} ^2\to
	\|\psi_0|_{L^2(\mathbb{R})}^2.
	 \end{equation*}
  \end{rem}
\begin{thm}
	\label{Th43}	
Functionals $F_\ve$ $\Gamma$-converge, with respect to the convergence defined in Theorem \ref{Th41}, to the functional $F_0$ given by
  \begin{equation*}
	F_0(\psi)=\frac{\theta}{2} \int\limits_{\mathbb{R}} (\psi^\prime (t))^2dt +  \int\limits_{\mathbb{R}}(1+bt^2)\psi^2(t)dt,
\end{equation*}
where 
\begin{equation*}
	b=\frac{1}{2}\left(\alpha\kappa(0)+\frac{\partial^2 m(0)}{\partial  x_2^2}\right),\quad \theta=\frac{\vartheta}{2},
\end{equation*}
  $\kappa(0)$ is the curvature of $\partial \Omega $ at the origin.
\end{thm}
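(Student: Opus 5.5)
The proof splits into the two $\Gamma$-convergence inequalities. Both are reduced to a tangential problem by factoring out the boundary-layer profile in the normal direction. Let $\phi_\ve$ denote the boundary-layer solution of the one-dimensional nonlocal problem with eigenvalue $\Lambda_\ve$; this is the problem obtained by averaging $J$ over the tangential variable, constructed in Subsections \ref{Consruct}--\ref{BLS}. We know $\phi_\ve\to\phi_0$ after scaling $q=r/\ve^{2/3}$, and $\Lambda_\ve=\Lambda_0+\ve^{1/3}\alpha q_0+o(\ve^{1/3})$. For $\rho_\ve$ supported in the boundary strip we write, in the local coordinates $x=X(r,s)$,
\[
\rho_\ve(X(r,s))=\phi_\ve(r/\ve^{2/3})\,w_\ve(r/\ve^{2/3},s/\ve^{1/2}) .
\]
We then use the ground-state (factorization) identity for nonlocal quadratic forms,
\[
\tfrac12\!\iint J\,(\phi u-\phi v)^2=\tfrac12\!\iint J\,\phi\phi'(u-v)^2+\int \Big(\int J\phi'\Big)\phi\,u^2 .
\]
Combined with the equation for $\phi_\ve$, this cancels the $\ve^{-2}$ and $\ve^{2/3}\Lambda_\ve$ contributions. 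What remains is $F_\ve$ rewritten as a weighted nonlocal Dirichlet energy for $w_\ve$ with weight $\phi_\ve\phi_\ve'$, plus a potential term. After the $1/\ve^{13/6}$ scaling, the potential term is
\[
\frac{1}{\ve}\bigl(m-m(0)-\alpha r\bigr)+1 .
\]
There is also a curvature correction, produced by the Jacobian $1-r\kappa(s)$ and by the discrepancy between the true kernel in curvilinear coordinates and the averaged flat one.

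\textbf{Upper bound.} Given $\psi\in C_c^\infty(\mathbb{R})$, I take $w_\ve=\psi(s/\ve^{1/2})$, so that $\rho_\ve=\phi_\ve(r/\ve^{2/3})\psi(s/\ve^{1/2})$ cut off outside the strip. Taylor expanding $\psi$ in $s$ as in the proof of Theorem \ref{Th31}, the tangential differences give
\[
\frac{1}{2}\Big(\int J(|z|^2)z_2^2dz\Big)\int(\psi')^2=\frac{\vartheta}{4}\int(\psi')^2=\frac{\theta}{2}\int(\psi')^2 ,
\]
with the normal profile normalized in $L^2$. The potential yields $\int(1+bt^2)\psi^2$ from two sources: $\tfrac12\partial^2_{x_2x_2}m(0)s^2/\ve$ and the curvature. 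For the latter, $\alpha r$ is evaluated against the measure $(1-r\kappa)\,dr\,ds$, and the distance $r$ to the curved boundary differs from $x_2$ by $\kappa s^2/2$; this produces $\tfrac12\alpha\kappa(0)t^2$. The residual $\ve^{1/3}\alpha q_0$ piece of $\Lambda_\ve$ is already built into the definition of $\Lambda_\ve$, which is exactly why $q_0$ appears in the eigenvalue. Density of $C_c^\infty$ in the domain of $F_0$ then finishes the upper bound.

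\textbf{Lower bound.} Let $\rho_\ve\to\phi_0\psi$ in the sense of Theorem \ref{Th41}. By Theorem \ref{Th41}(i) the contribution away from the strip $\{0<r<\frac14\ve^{5/8},\ |s|<\frac14\ve^{11/24}\}$ is negligible. The energy outside the strip is nonnegative up to $o(1)$, since $m-m(0)$ dominates $\ve^{2/3}\Lambda_\ve$ there. Inside the strip I apply the factorization and drop the nonnegative part of the Dirichlet energy coming from normal differences of $w_\ve$. The tangential part is a one-dimensional nonlocal energy in $t$ with weight $\phi_\ve(q)\phi_\ve(q')$. I would handle it by testing against smooth functions as in \eqref{3.14}: split $w_\ve=\psi+(w_\ve-\psi)$, use lower semicontinuity, and use the fact that the weight converges to $\phi_0(q)^2$ locally uniformly. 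The potential term passes to the limit by weak convergence together with the bound $\int(1+t^2)w_\ve^2\phi_\ve^2\le C$, which follows from $F_\ve\le C$.

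\textbf{Main obstacle.} The hard step is the error analysis of the factorization near the boundary and at large $q$. The weight $\phi_\ve\phi_\ve'$ degenerates as $q\to0$ and $q\to\infty$, and the curvilinear change of variables introduces kernel corrections of relative size $\ve^{1/3}$ relative to the $\ve^{-2}$ leading term. These corrections must be shown to be $o(\ve^{13/6})$ or sign-definite. The strip widths $\ve^{5/8}$ and $\ve^{11/24}$ are chosen precisely to make these remainders vanish. I would first control them using the compact support of $J$ together with $C^2$ bounds on $J$ and on $\phi_\ve$ from Subsection \ref{BLS}, absorbing the leftovers into the potential term by means of Theorem \ref{Th41}(i).
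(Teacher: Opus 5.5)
Your architecture is the paper's: the ground-state factorization $\rho_\ve=\phi_\ve\psi_\ve$, Lemma~\ref{normal_factor} to turn the $\phi_\ve$-difference term into $(\Lambda_\ve-\alpha q+R_\ve)\phi_\ve^2\psi_\ve^2$, the recovery sequence $\phi_\ve(q)\psi(t)$, and the splitting \eqref{9.3} for the lower bound. Your upper bound is fine; note only that the second term of your identity should read $\int(\int J(\phi-\phi'))\phi u^2$.

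The lower bound, however, has a genuine gap in the localization step. You claim that outside $S=\{0<r<\frac14\ve^{5/8},\ |s|<\frac14\ve^{11/24}\}$ the energy is nonnegative up to $o(1)$, because $m-m(0)$ dominates $\ve^{2/3}\Lambda_\ve$. This is false near the boundary. Since $\Lambda_0>0$, for $r\lesssim\ve^{2/3}$ and $\frac14\ve^{11/24}<|s|\lesssim\ve^{1/3}$ one has $m-m(0)\approx\alpha r+bs^2\ll\Lambda_0\ve^{2/3}$. So the raw potential $m-m(0)-\ve^{2/3}\Lambda_\ve+\ve$ is negative, of size $\ve^{2/3}$, on a region tangentially much wider than $S$. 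Theorem~\ref{Th41}(i) only says the mass there is $o(\ve^{7/6})$. After the prefactor $\ve^{-13/6}$, this negative contribution is therefore only $o(\ve^{-1/3})$, not $o(1)$. What compensates it is the boundary-layer Dirichlet energy of $\rho_\ve$ in that same region, which you throw away by localizing before factorizing. Restricting the form to $S\times S$ also destroys the identity of Lemma~\ref{normal_factor} near the lateral edges of $S$, where the $\tilde s$-integration is incomplete.

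The remedy is to reverse the order, as the paper does.
\begin{itemize}
\item Discard only the (nonnegative) interactions involving $\tilde\Omega_\ve$, where $m-m(0)\ge c\ve^{5/8}\gg\ve^{2/3}$ anyway.
\item Factorize on the whole layer $\{0<r<\ve^{5/8}\}$ with the full tangential range. Then \eqref{Psi_ve} cancels $-\ve^{2/3}\Lambda_\ve$ pointwise, and the effective potential $m-m(0)-\alpha r+\ve+\ve^{2/3}R_\ve$ is nonnegative throughout the layer; it is $\ge\frac{\ve}{2}bt^2+\ve+O(\ve^{13/12})$ for $|t|\le\ve^{-1/24}$. This uses the one-sided bound $R_\ve\ge-C(\ve^{1/2}+\ve^{1/3}|s|)$ all the way to $q=\ve^{-1/24}$, i.e. the supersolution property of $\phi_\ve$ at the right end.
\item Only afterwards restrict the now pointwise nonnegative integrands to a box.
\end{itemize}

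A smaller point: you cannot ``drop the part of the Dirichlet energy coming from normal differences''. For a nonlocal form, $(\psi_\ve(q,t)-\psi_\ve(\tilde q,\tilde t))^2$ admits no splitting into normal and tangential pieces with nonnegative remainder. The step is also unnecessary: apply $\psi_\ve=\psi_0+(\psi_\ve-\psi_0)$ to the full two-variable form $\Upsilon_\ve$ of \eqref{Upsilon}. Because $\psi_0$ does not depend on $q$, $\Upsilon_\ve(\psi_0)$ involves only tangential differences, and $\Upsilon_\ve(\psi_\ve-\psi_0)\ge0$ is dropped whole. The cross term vanishes because $\|(\psi_\ve-\psi_0)\phi_\ve\|_{L^2}\to0$ while $A_\ve\psi_0$ stays bounded in $L^2$; its $O(\ve^{-1/2})$ first-order part cancels by the evenness of the kernel in $\tau$. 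That boundedness, not the locally uniform convergence of the weight, is what makes the argument work.
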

  

\subsection{
Construction of a boundary layer solution}
\label{Consruct}
  We seek a first-order approximation of the first eigenfunction as a function of the single variable $r$. Then the kernel of the resulting one-dimensional integral operator writes as
\begin{equation}
		\begin{aligned}
	\frac{1}{\ve}\int\limits_{-L/2}^{L/2}  J\left(\frac{|X(r,s)-X(\tilde r,\tilde s)|^2}{\ve^2}\right) (1-\tilde r\kappa(\tilde s)) d \tilde s &  =I_0\Bigl(\frac{\tilde r-r}{\ve}\Bigr)\\ &
  +\frac{\kappa(s)(r-\tilde r)}{2(1-\tilde r\kappa(s))}
	I_0\Bigl(\frac{\tilde r-r}{\ve}\Bigr)+ \ve^2\mathcal{O_\ve}(r,\tilde r,s),
	\label{nu_nu}
\end{aligned}
\end{equation}
where $L$ is the length of the curve $\partial \Omega$,  
\begin{equation}
I_0(z)= \int\limits_{\mathbb{R}} J (s^2+z^2) \, ds, 
\label{I_0}
\end{equation}
and $\mathcal{O_\ve}(r,\tilde r, s)$ is a uniformly (in $\ve$) bounded function that equals zero if $|r-\tilde r|> C\ve$.

Indeed, since the function $J(z)$ has a compact support, we are interested only in those pairs $(r,s)$ and $(\tilde r,\tilde s)$ that satisfy $|X(r,s)-X(\tilde r,\tilde s)|\leq O(\ve)$,  
i.e.  $|r-\tilde r|\leq C\ve$, $|X(0,s)-X(0,\tilde s)|\leq C\ve$. Set $\tau =(\tilde s-s)/\ve$, 
$\gamma=(1-\tilde r\kappa(s))\tau$. Then we have
\begin{equation}
		\begin{aligned}
\frac{|X(r,s)-X(\tilde r,\tilde s)|^2}{\ve^2}
&=\tau^2 (1-\tilde r\kappa(s))^2 - 
\ve \tilde r \kappa'(s)(1-\tilde r\kappa(s))\tau^3+
\frac{(\tilde r-r)^2}{\ve^2}  \\ &+ (\tilde r-r)\tau^2\kappa(s)(1-\tilde r\kappa(s))+O(\ve^2);
\label{yadro}
	\end{aligned}
\end{equation}
\begin{equation}
1-\tilde r\kappa(\tilde s)=1-\tilde r\kappa(s)-\ve \tilde r  \kappa'(s)\tau +O(\ve^2),
\label{jacobian}
\end{equation}
and for sufficiently small $\ve$ we obtain
\begin{equation}
	\begin{aligned}
		\frac{1}{\ve}&\int\limits_{-L/2}^{L/2} J\Biggl(\frac{|X(r,s)-X(\tilde r,\tilde s)|^2}{\ve^2}\Biggr) (1-\tilde r\kappa(\tilde s)) d \tilde s \\ 
			&=I_0\Bigl(\frac{\tilde r-r}{\ve}\Bigr)
		+\kappa(s)\frac{\tilde r-r}{1-\tilde r\kappa(s)}
		\int\limits_{-\infty}^{\infty} J'\Bigl(\gamma^2+\frac{(\tilde r-r)^2}{\ve^2}\Bigr)\gamma^2
		\, d\gamma+\ve^2\mathcal{O_\ve}(r,\tilde r,s).
		\label{nu_nu3}
	\end{aligned}
\end{equation}
Integrating by parts in \eqref{nu_nu3} we arrive at \eqref{nu_nu}. The latter formula, in turn,  can be rewritten as  
	\begin{equation*}
		\begin{aligned}
		\frac{1}{\ve}&\int\limits_{-L/2}^{L/2} J\Biggl(\frac{|X(r,s)-X(\tilde r,\tilde s)|^2}{\ve^2}\Biggr) (1-\tilde r\kappa(\tilde s)) d \tilde s = 
I_\ve\Bigl(\frac{\tilde r-r}{\ve},\tilde r\Bigr) +\ve(|s|+\ve)\mathcal{\tilde O_\ve}(r,\tilde r,s),
	\end{aligned}
\end{equation*}
where 
$$I_\ve(z,\tilde r)=	I_0(z)-\frac{\ve\kappa(0)}{2}\frac{z}{1-\tilde r\kappa(0)}I_0(z).
$$

Next we perform the change of variables $\tilde r=\ve^{2/3} \tilde q$, $r=\ve^{2/3} q$ and construct an approximate solution of the following  equation
\begin{equation}
\begin{aligned}
		-\frac{1}{\ve}&\int\limits_{0}^{\ve^{-1/24}}
		I_\ve\Bigl(\frac{\tilde q-q}{\ve^{1/3}}, \ve^{2/3} \tilde q\Bigr)\phi_\ve(\tilde q)  d\tilde q \\  
		&+\frac{1}{\ve}\int\limits_{-\infty}^{\ve^{-1/24}} I_\ve\Bigl(\frac{\tilde q-q}{\ve^{1/3}}, \ve^{2/3} \tilde q\Bigr) d\tilde q \, \phi_\ve(q)+\alpha q \phi_\ve(q) =
		\Lambda_\ve \phi_\ve(q),\quad 0\leq q\leq \ve^{-1/24}.
		\label{eq_Lambda_ve}
	\end{aligned}
	\end{equation}
One can show that the principal eigenvalue $\Lambda_\ve$ of  the operator on the right-hand side of \eqref{eq_Lambda_ve}  exists and 
converges, as $\ve\to 0$, to  
the principal eigenvalue $\Lambda_0$ 
of the equation 	\eqref{Airy_sln}
with the boundary condition $\phi_0(0)=0$, where $\theta=\int_{-\infty}^{\infty}I_0(\zeta)\zeta^2d\zeta$. Also, the corresponding normalized positive eigenfunction of  \eqref{eq_Lambda_ve} converges 
to that of \eqref{Airy_sln}, $\phi_0(q)$. Moreover, $\phi_0$ is given by 
\begin{equation*}
\phi_0(q)=\frac{1}{\Pi}{\rm Ai}\bigl( (2\alpha/\theta)^{1/3} (q-\Lambda_0/\alpha)\bigr), \quad 
\Pi=\left(\int_0^\infty {\rm Ai}^2\bigl( (2\alpha/\theta)^{1/3} (q-\Lambda_0/\alpha)\bigr) dq\right)^{1/2},
\end{equation*}
where ${\rm Ai}(t)$ is the Airy function
\begin{equation*} 
{\rm Ai}(t)=\frac{1}{\pi}\int\limits_{0}^{\infty} \cos\Bigl(\frac{\rho^3}{3}+t\rho \Bigr)d\rho
\end{equation*}
that is a bounded solution of the equation 
\begin{equation*} 
-{\rm Ai}^{\prime\prime}(t)+t{\rm Ai}(t)=0, \quad t \in \mathbb{R},
\end{equation*}   
and $\Lambda_0$ is expressed via the first (maximal) zero $\eta_1$ of ${\rm Ai}(t)$  by
$\Lambda_0=-(\theta/2)^{1/3}\alpha^{2/3}\eta_1$. 

Notice that to find an appropriate approximation of the solution of \eqref{eq_Lambda_ve} near the right end $\ve^{-1/24}$ of the interval, the function $\phi_0(q)$ should be modified  to satisfy the zero Neumann boundary condition. 
However, to avoid the problem of construction of the boundary layer near  $\ve^{-1/24}$, we rather introduce a correction  $\phi_{0 \ve}(q)$ of $\phi_0(q)$ such that $\phi_{0 \ve}^{\prime}(\ve^{-1/24})>0$ leading 
to a supersolution on the right end. 
Namely, we consider 
\begin{equation*} 
	\phi_{0 \ve}(q)=\frac{1}{\Pi}{\rm Ai}\bigl( (2\alpha/\theta)^{1/3} (q-\Lambda_0/\alpha)\bigr)+\beta_\ve \chi(q-1)  {\rm Bi}\bigl( (2\alpha/\theta)^{1/3} (q-\Lambda_0/\alpha)\bigr),
\end{equation*} 
where ${\rm Bi}(t)$ is the Airy function of the second kind, 
\begin{equation*} 
	{\rm Bi}(x)=\frac{1}{\pi}\int\limits_{0}^{\infty}
	\Biggl[\exp\Bigl(-\frac{\rho^3}{3}+t\rho\Bigr)+ \sin\Bigl(\frac{\rho^3}{3}+t\rho\Bigr)\Biggr] d\rho,
\end{equation*}
\begin{equation*} 
	\beta_\ve= -\frac{2}{\Pi}\frac{{\rm Ai'}\bigl( (2\alpha/\theta)^{1/3} (\ve^{-1/24}-\Lambda_0/\alpha)\bigr)}{{\rm Bi'}\bigl( (2\alpha/\theta)^{1/3} (\ve^{-1/24}-\Lambda_0/\alpha)\bigr)}= \frac{1}{\Pi}\exp\Bigl(-\frac{4}{3}(2\alpha/\theta)^{1/2} (\ve^{-1/24}-\Lambda_0/\alpha)^{3/2}\Bigr)\bigl(1+o(1)\bigr),
	\end{equation*}  
and  $0\leq\chi(q)\leq 1$ is a smooth cut-off function such that 
\begin{equation*}
	\chi(q)=0 \ \text{for} \ q\leq 0 \ \text{and} \ \chi(q)=1 \ \text{for} \ q\geq 1.
\end{equation*}

Next we  construct an approximate solution of \eqref{eq_Lambda_ve}  (the pair $\phi_\ve(q)>0$ and $\Lambda_\ve$) such that
\begin{equation}
-\frac{1}{\ve}\int\limits_{0}^{\ve^{-1/24}} 
		I_\ve\Bigl(\frac{\tilde q-q}{\ve^{1/3}}, \ve^{2/3} \tilde q\Bigr)\phi_\ve(\tilde q)  d\tilde q 
		+\frac{1}{\ve}\int\limits_{-\infty}^{\ve^{-1/24}} I_\ve\Bigl(\frac{\tilde q-q}{\ve^{1/3}}, \ve^{2/3} \tilde q\Bigr) d\tilde q\, \phi_\ve(q)
		=\left(\Lambda_\ve-\alpha q+O(\ve^{1/2})\right)\phi_\ve(q)
\label{approx_solution}
\end{equation}
away from the right end $\ve^{-1/24}$, and 
\begin{equation*}
-\frac{1}{\ve}\int\limits_{0}^{\ve^{-1/24}} 
		I_\ve\Bigl(\frac{\tilde q-q}{\ve^{1/3}}, \ve^{2/3} \tilde q\Bigr)\phi_\ve(\tilde q)  d\tilde q 
		+\frac{1}{\ve}\int\limits_{-\infty}^{\ve^{-1/24}} I_\ve\Bigl(\frac{\tilde q-q}{\ve^{1/3}}, \ve^{2/3} \tilde q\Bigr) d\tilde q\, \phi_\ve(q) \geq \left(\Lambda_\ve-\alpha q+O(\ve^{1/2})\right)\phi_\ve(q)
\end{equation*}
on $[0, \ve^{-1/24}]$.
		
We seek an approximate eigenvalue  $\Lambda_\ve$  in the form 
\begin{equation}
\label{asympt_Lambda}
\Lambda_\ve=\Lambda_0-\alpha    q_\ve^*- \ve^{2/3}\hat q_1,
\end{equation}
with $\hat q_1$ defined below  in the construction of the asymptotic expansion away from the ends of the interval 
and $q_\ve^*=O(\ve^{1/3})$ to be specified in the analysis of the boundary layer at $0$. Note that the higher order terms in \eqref{asympt_Lambda}
are introduced to achieve the necessary accuracy  in \eqref{approx_solution} near zero,
and \eqref{asympt_Lambda} is a correct approximation of the principal eigenvalue at least to the order $O(\ve^{1/3})$.


We set $\phi_\ve(q):=\phi_{0\ve}(q+ q_\ve^*)+\ve^{2/3}\hat\phi_{0\ve}(q+ q_\ve^*)$  away from zero (with unknown for the moment function $ \hat\phi_{0\ve}$) 
and  observe that 
\begin{equation*}
	-\frac{\theta}{2}\phi_{0\ve}^{\prime\prime}(q+q_\ve^*) + \alpha \big(q+q_\ve^*\big) \phi_{0\ve}(q+q_\ve^*)=\Lambda_0 \phi_{0\ve}(q+q_\ve^*). 	
	\label{6.3}
\end{equation*}
Away from the ends $0$ and $\ve^{-1/24}$ of the interval 
we have 
$$
\int\limits_{0}^{\ve^{-1/24}} I_\ve\Bigl(\frac{\tilde q-q}{\ve^{1/3}}, \ve^{2/3} \tilde q\Bigr)\phi_\ve(\tilde q)  d\tilde q =\int\limits_{-\infty}^{\infty} I_\ve\Bigl(\frac{\tilde q-q}{\ve^{1/3}}, \ve^{2/3} \tilde q\Bigr)\phi_\ve(\tilde q)  d\tilde q,
$$
while
%
%
%
%
%
\begin{equation}
	\begin{aligned}
			-&\frac{1}{\ve}\int\limits_{-\infty}^{\infty}
	I_\ve\Bigl(\frac{\tilde q-q}{\ve^{1/3}}, \ve^{2/3} \tilde q\Bigr)\big(\phi_{0\ve}(\tilde q+q_\ve^*)-\phi_{0\ve}(q+q_\ve^*)\big)  d\tilde q 
	+\big(\alpha q-\Lambda_\ve \big) \phi_{0\ve}(q+q_\ve^*)\\ 
		=&-\frac{1}{\ve^{1/3}}\int\limits_{-\infty}^{\infty} I_0
		\bigl(\zeta\bigr)  
	\Big(\phi'_{0\ve}(q+q_\ve^*)\zeta+\frac{\ve^{1/3}}{2} \phi''_{0\ve}(q+q_\ve^*)\zeta^2  
+\frac{\ve^{2/3}}{6}\phi'''_{0\ve}(q+q_\ve^*)\zeta^3
 +\frac{\ve}{24}\phi^{(iv)}_{0\ve}(q+q_\ve^*)\zeta^4
 \Big)  d\zeta \\ 
 	&+\ve^{2/3}\frac{\kappa(0)}{2}\int\limits_{-\infty}^{\infty} I_0
\bigl(\zeta\bigr)  \frac{1}{1-\kappa(0)(\ve^{2/3}q-\ve\zeta)}
 \phi'_{0\ve}(q+q_\ve^*)\zeta^2  d\zeta  \\
 &+\ve^{1/3}\frac{\kappa(0)}{2}\int\limits_{-\infty}^{\infty} I_0
 \bigl(\zeta\bigr)  \frac{\zeta}{1-\kappa(0)(\ve^{2/3}q-\ve\zeta)} \int\limits_{0}^{\ve^{1/3}\zeta} \phi''_{0\ve}(q+\tilde q_\ve^*+\tau)(\ve^{1/3}\zeta-\tau)d\tau d\zeta\\ 
		&-\frac{1}{24\ve^{2/3}}\int\limits_{-\infty}^{\infty} I_0
		\bigl(\zeta\bigr)\int\limits_{0}^{\ve^{1/3}\zeta} \phi^{(v)}_{0\ve}(q+q_\ve^*+\tau)(\ve^{1/3}\zeta-\tau)^4d\tau d\zeta +\big(\alpha q-\Lambda_\ve \big) \phi_{0\ve}(q+q_\ve^*) \\ 
		=&	-\frac{1}{2}\phi^{\prime\prime}_{0\ve}(q+q_\ve^* )\int\limits_{-\infty}^{\infty}
		I_0\big(\zeta\big) \zeta^2 d\zeta \\ 
	&+\ve^{2/3}\left(\hat q_1\phi_{0\ve}(q+q_\ve^*)
		+ \frac{\kappa(0)}{2}\phi'_{0\ve}(q + q_\ve^*)\int\limits_{-\infty}^{\infty}
		I_0\big(\zeta\big) \zeta^2 d\zeta
		- \frac{1}{24}\phi^{(iv)}_{0\ve}(q + q_\ve^*)\int\limits_{-\infty}^{\infty}
		I_0\big(\zeta\big) \zeta^4 d\zeta\right)
		 \\
		&\quad+O(\ve)\Big(\phi''_{0\ve}(q+ q_\ve^*+\ve^{1/3}\zeta_\ve^*)+\phi^{(v)}_{0\ve}(q+ q_\ve^*+\ve^{1/3}\tilde\zeta_\ve^*)\Big)+\big(\alpha (q+q_\ve^*)-\Lambda_0 \big) \phi_{0\ve}(q+q_\ve^*)\\
		=&\ve^{2/3}\left(\hat q_1\phi_{0\ve}(q+q_\ve^*)
			+ \frac{\kappa(0)}{2}\phi'_{0\ve}(q + q_\ve^*)\int\limits_{-\infty}^{\infty}
		I_0\big(\zeta\big) \zeta^2 d\zeta- \frac{1}{24}\phi^{(iv)}_{0\ve}(q+q_\ve^*)\int\limits_{-\infty}^{\infty}
		I_0\big(\zeta\big) \zeta^4 d\zeta\right)
		\\
		&+O(\ve)\Big(\phi''_{0\ve}(q+ q_\ve^*+\ve^{1/3}\zeta_\ve^*)+\phi^{(v)}_{0\ve}(q+ q_\ve^*+\ve^{1/3}\tilde\zeta_\ve^*)\Big),
		\label{calculus}
\end{aligned}
\end{equation}
where we have set $\tilde q-q=:\ve^{1/3}\zeta$, and $\zeta_\ve^*$, $\tilde \zeta_\ve^*$ satisfy $|\zeta_\ve^*|<C$, $| \tilde \zeta_\ve^*|<C$. 
Similarly, 
\begin{equation*}
	\begin{aligned}
		-\frac{1}{\ve^{1/3}}&\int\limits_{-\infty}^{\infty}
		I_0\Bigl(\frac{\tilde q-q}{\ve^{1/3}}\Bigr)\big(\hat\phi_{0\ve}(\tilde q+q_\ve^*)-\hat\phi_{0\ve}(q+q_\ve^*)\big)  d\tilde q 
		+\ve^{2/3}\big(\alpha (\tilde q+q_\ve^*)-\Lambda_\ve \big) \hat\phi_{0\ve}(q+q_\ve^*)\\
			&=-\frac{\ve^{2/3}\theta}{2}\hat\phi^{\prime\prime}_{0\ve}(q+q_\ve^* ) +\ve^{2/3}\big(\alpha (q+q_\ve^*)-\Lambda_0 \big) \hat\phi_{0\ve}(q+q_\ve^*)+O(\ve).
	\end{aligned}
\end{equation*}
Note that \eqref{calculus} is satisfactory everywhere but near zero, where $\phi_0$ vanishes. To fix this we set  $\hat\phi_{0\ve}(q)=(1-\chi(q-1))\hat\phi_{0}(q+q_\ve^*)$ and, collecting terms of the order $\ve^{2/3}$, we obtain the following equation for 
$\hat\phi_0$,
\begin{equation}
\label{easy_zadachka} 
	\begin{aligned}
			&-\frac{\theta}{2}\hat\phi_0^{\prime\prime}(q)
	+\big(\alpha q-\Lambda_0 \big) \hat\phi_0(q)\\
	&=-\frac{\kappa(0)}{2}\phi'_0(q)\int\limits_{-\infty}^{\infty}
	I_0\big(\zeta\big) \zeta^2 d\zeta+\frac{1}{24}\phi^{(iv)}_0(q)\int\limits_{-\infty}^{\infty}
		I_0\big(\zeta\big) \zeta^4 d\zeta-\hat q_1 \phi_0(q), \quad q>0.
	\end{aligned}
\end{equation}
We supplement  \eqref{easy_zadachka} with the boundary condition $\hat\phi_0(0)=0$. Then the solvability condition for this problem yields
$$
\hat q_1=\frac{1}{24}\int_{-\infty}^{\infty}
I_0\big(\zeta\big) \zeta^4 d\zeta 
\int_{0}^{\infty}
\phi^{(iv)}_0(q)\phi_0(q)dq,
$$ 
and we choose the unique solution  $\hat\phi_0$ orthogonal to $\phi_0$.

Thus $\phi_\ve$ satisfies \eqref{approx_solution} away from the ends of the interval $0$  and $\ve^{-1/24}$.
%
%
%
%
%
Next we modify $\phi_\ve$ near zero,  
namely we postulate the ansatz 
$$\phi_\ve(q)=\phi_0\bigl(q+q_\ve^*\bigr)+\ve^{2/3}\hat\phi_0(q+ q_\ve^*)
+\ve^{1/3}\varphi_1(\xi) +\ve^{2/3}\varphi_2(\xi)+\ve\varphi_3(\xi)+\ve^{4/3}\varphi_4(\xi),
$$ 
$$
q_\ve^*=q_0\ve^{1/3}+q_1\ve^{2/3}+q_2\ve+q_3\ve^{4/3},
$$
where $\xi=\ve^{-1/3}q$, and $\varphi_k(\xi)\to 0$ as $\xi \to \infty$ faster than any power of $\xi$. Substitute this representation into the equation \eqref{eq_Lambda_ve} 
and choose the numbers  $q_0, \dots q_3$ and functions $\varphi_1,\dots \varphi_4$ so that
\begin{equation*}
	\begin{aligned}
		-&\frac{1}{\ve^{2/3}}\int\limits_{0}^{\infty}
		I_\ve \bigl(\tilde \xi-\xi, \ve^{2/3} q\bigr)
		\Big(\phi_0\bigl(\ve^{1/3}\tilde\xi+q_\ve^*
		\bigr)+\ve^{2/3}\hat\phi_0(\ve^{1/3}\tilde\xi+ q_\ve^*)+\sum_{k=1}^4\ve^{k/3}\varphi_k(\tilde \xi)
		\Big)  d\tilde\xi \\ 
		&+\frac{1}{\ve^{2/3}}\Big(
		\phi_0\bigl(\ve^{1/3}\xi+q_\ve^*
		\bigr)+\ve^{2/3}\hat\phi_0(\ve^{1/3}\xi+ q_\ve^*)
	                  +\sum_{k=1}^4\ve^{k/3}\varphi_k(\xi)
				\Big)\\ 
		&+\alpha\ve^{1/3}\xi 
		\Big(
		\phi_0\bigl(\ve^{1/3}\xi+q_\ve^*
		\bigr)+\ve^{2/3}\hat\phi_0(\ve^{1/3}\xi+ q_\ve^*)
		+\sum_{k=1}^4\ve^{k/3}\varphi_k(\xi)
		\Big)\\ 
		&-\Lambda_\ve \Big(
		\phi_0\bigl(\ve^{1/3}\xi+q_\ve^*
		\bigr)+\ve^{2/3}\hat\phi_0(\ve^{1/3}\xi+ q_\ve^*)
		+\sum_{k=1}^4\ve^{k/3}\varphi_k(\xi)
		\Big) \\ 
		&=O(\ve).
	\end{aligned}
\end{equation*}
To this end  we add and subtract $	-\ve^{-2/3}\int\limits_{-\infty}^{0}
I_\ve\bigl(\tilde \xi-\xi,\ve^{2/3} q\bigr) 
	\Big(\phi_0\bigl(\ve^{1/3}\tilde\xi+q_\ve^*
\bigr)+\ve^{2/3}\hat\phi_0(\ve^{1/3}\tilde\xi+ q_\ve^*)
\Big) d\tilde\xi $ and expand  $\phi_0$ in the Taylor series in the neighborhood of $0$. Collecting like terms  (in powers of $\ve^{1/3}$), we find the following equations for   $\varphi_k(\xi)$:
\begin{equation}
	\int\limits_{-\infty}^{0}
		I_0\bigl(\tilde \xi-\xi\bigr) 
		\phi'_0(0)(\tilde \xi+q_0) d\tilde \xi
		-\int\limits_{0}^{\infty} 	I_0\Bigl(\tilde \xi-\xi\Bigr) \varphi_1(\tilde \xi)d\tilde \xi	+\varphi_1(\xi)  = 0,
			\label{pershe}
\end{equation}
\begin{equation*}
	\int\limits_{-\infty}^{0}
	I_0\bigl(\tilde \xi-\xi\bigr) 
	\Bigl(\phi'_0(0)q_1+\frac{1}{2}\phi''_0(0)(\tilde \xi+q_0)^2  \Bigr) d\tilde \xi
	-\int\limits_{0}^{\infty} 		I_0\bigl(\tilde \xi-\xi\bigr) \varphi_2(\tilde \xi)d\tilde \xi	+\varphi_2(\xi)  = 0,
\end{equation*}
\begin{equation*}
	\begin{aligned}
	\int\limits_{-\infty}^{0}
	&	I_0\bigl(\tilde \xi-\xi\bigr) 
	\Bigl(\phi'_0(0) q_2+\phi''_0(0)(\tilde \xi+q_0)q_1  +\frac{1}{6}\phi'''_0(0)(\tilde \xi+q_0)^3+\hat\phi'_0(0)(\tilde \xi+q_0)\Bigr) d\tilde \xi\\ 
	&	-\int\limits_{0}^{\infty} 		I_0\bigl(\tilde \xi-\xi\bigr) \varphi_3(\tilde \xi)d\tilde \xi	+\varphi_3(\xi)  -\Lambda_0\varphi_1(\xi) = 0,
\end{aligned}
\end{equation*}
\begin{equation*}
	\begin{aligned}
			\int\limits_{-\infty}^{0}
			&I_0\bigl(\tilde \xi-\xi\bigr) 
		\Bigl(\phi'_0(0) q_3+\frac{1}{2}\phi''_0(0)\left(q_1^2+2(\tilde \xi+q_0)q_2 \right) +\frac{1}{2}\phi'''_0(0)(\tilde \xi+q_0)^2q_1+\frac{1}{24}\phi^{(iv)}_0(0)(\tilde \xi+q_0)^4\\
		&+\hat\phi'_0(0)q_1+\frac{1}{2}\hat\phi''_0(0)(\tilde \xi+q_0)^2\Bigr) d\tilde \xi-\frac{\kappa(0)}{2}\int\limits_{-\infty}^{0}(\tilde \xi-\xi)I_0\bigl(\tilde \xi-\xi\bigr) \left(\phi'_0(0)(\tilde \xi+q_0)+\varphi_1(\tilde\xi)\right)d\tilde\xi \\
			&-\int\limits_{0}^{\infty} 		I_0\bigl(\tilde \xi-\xi\bigr) \varphi_4(\tilde \xi)d\tilde \xi	+\varphi_4(\xi)  -\Lambda_0\varphi_2(\xi) +\alpha q_0\varphi_1(\xi) = 0.
	\end{aligned}
\end{equation*}
Consider equation \eqref{pershe}. Introducing  
 $\varphi(\xi):=\varphi_1(\xi)/  \phi'_0(0)$,  
we have
\begin{equation}
	-\int\limits_{0}^{\infty} 	I_0\big(\tilde \xi-\xi\big) \varphi(\tilde \xi)d\tilde \xi +\varphi(\xi)
=
	-\int\limits_{-\infty}^{0}
	I_0\big(\tilde \xi-\xi\big) 
	(\tilde\xi+q_0) d\tilde \xi.
	\label{eq_varphi}
\end{equation}
In Section \ref{BLS} we show that there exists $q_0>0$ such that \eqref{eq_varphi}
has a continuous solution $\varphi(\xi)$ decaying faster than any power of $\xi$ as
 $\xi\to\infty$. The numbers $q_1$, $q_2$, $q_3$ and functions $\varphi_2(\xi)$, $\varphi_3(\xi)$, $\varphi_4(\xi)$ are found analogously.

Finally we set, with a slight abuse of notation, 
\begin{equation}
		\begin{aligned}	
	\label{phi_ve_last}
	\phi_\ve(q)=\phi_{0 \ve}\bigl(q+q_\ve^*\bigr)&+\ve^{2/3}\hat\phi_{0 \ve}(q+ q_\ve^*)\\ 
&+\bigl(1-\chi(q-1)\bigr)\bigl(\ve^{1/3}\varphi_1(\xi) +\ve^{2/3}\varphi_2(\xi)+\ve\varphi_3(\xi)+\ve^{4/3}\varphi_4(\xi)\bigr).
	\end{aligned}	
	\end{equation} 
This function satisfies \eqref{approx_solution} away from the ends of the interval while for $q$ close to $\ve^{-1/24}$ ($0\leq \ve^{-1/24}-q\leq C\ve^{1/3}$), using  calculations in \eqref{calculus} we obtain

\begin{equation*}
		\begin{aligned}	
	\frac{1}{\ve}&\int\limits_{-\infty}^{\ve^{-1/24}} I_0\Bigl(\frac{\tilde q-q}{\ve^{1/3}}\Bigr) d\tilde q \phi_{0\ve}(q)
	-\frac{1}{\ve}\int\limits_{0}^{\ve^{-1/24}} I_0\Bigl(\frac{\tilde q-q}{\ve^{1/3}}\Bigr)\phi_{0\ve}(\tilde q) d\tilde q 
	+\alpha q \phi_{0\ve}(q) - 
	\Lambda_\ve \phi_{0\ve}(q)\\
&	= 
	\frac{1}{\ve}\int\limits_{-\infty}^{+\infty} I_0\Bigl(\frac{\tilde q-q}{\ve^{1/3}}\Bigr)\Bigl(\phi_{0\ve}(q)-\phi_{0\ve}(\tilde q) \Bigr) d\tilde q 
	+\alpha q \phi_{0\ve}(q) - 
	\Lambda_\ve \phi_{0\ve}(q)\\
&\quad\quad\quad	-	\frac{1}{\ve}\int\limits_{\ve^{-1/24}}^{+\infty} I_0\Bigl(\frac{\tilde q-q}{\ve^{1/3}}\Bigr)d\tilde q \  \phi'_{0\ve}(q_\ve^*)(q_\ve^*-q) 
\\
&\geq 
\frac{1}{\ve}\int\limits_{-\infty}^{+\infty} I_0\Bigl(\frac{\tilde q-q}{\ve^{1/3}}\Bigr)\Bigl(\phi_{0\ve}(q)-\phi_{0\ve}(\tilde q) \Bigr) d\tilde q 
+\alpha q \phi_{0\ve}(q) - 
\Lambda_\ve \phi_{0\ve}(q)\geq O(\ve^{1/2})\phi_{0\ve}(q),
		\end{aligned}
\end{equation*}
where $|q_\ve^*-\ve^{-1/24}|\leq  C\ve^{1/3}$.  

Notice that $\phi_{\ve}> c\ve^{1/3}$ in a neighborhood of zero with $c>0$ independent of $\ve$ (see Corollary \ref{cor72} and Remark \ref{rem73} ). Thus \eqref{approx_solution} also holds in a neighborhood of zero.  We summarise the properties of the function  $\phi_{\ve}$ in

\begin{lem} 
The function $\phi_{\ve}$ given by  \eqref{phi_ve_last} for $q\in [0,\ve^{-1/24}]$ (and set to be zero for $q<0$) satisfies 
\begin{equation}
	\begin{aligned}	
\frac{1}{\ve^2}\int_{-\infty}^{\ve^{-1/24}}\int_{\partial\Omega}  J\left(\frac{|X(\ve^{2/3}q,s)-X(\ve^{2/3}\tilde q,\tilde s)|^2}{\ve^2}\right)& j_\ve(\tilde q,\tilde s){\rm d} \tilde s\, \bigl(\phi_\ve(q)-\phi_\ve(\tilde q)\bigr)d\tilde q \\
&=(\Lambda_\ve -\alpha q)\phi_\ve(q)+R_\ve(q,s)\phi_\ve(q),
\label{formula_lemma51}
\end{aligned}
\end{equation}
where  
\begin{equation}
\label{j_ve}
j_\ve(\tilde q, \tilde s)=1-\ve^{2/3}\tilde q\kappa(\tilde s),
\end{equation}
and 
\begin{equation}
\label{Tse_Nashe_Vse}
\begin{aligned}
		&R_\ve(q,s)\geq -C(\ve^{1/2} +\ve^{1/3}|s|)\quad &\text{for}&\quad q\in [0,\ve^{-1/24}],\\
&R_\ve(q,s) \leq C(\ve^{1/2} +\ve^{1/3}|s|)\quad  &\text{for}&\quad q\in [0,\ve^{-1/24}-C \ve^{1/3}].
\end{aligned}
\end{equation}
\label{normal_factor}  
\end{lem}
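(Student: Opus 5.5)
The lemma collects the properties of the construction above, so the plan is to pass from the two-dimensional kernel on the left-hand side of \eqref{formula_lemma51} to the one-dimensional problem \eqref{eq_Lambda_ve}, and then use the interior, near-zero and near-right-end estimates already obtained for $\phi_\ve$.

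\emph{Reduction to one dimension.} First substitute $r=\ve^{2/3}q$, $\tilde r=\ve^{2/3}\tilde q$ in the $\tilde s$-integral. By \eqref{nu_nu} and the subsequent rewriting, the inner integral $\int_{\partial\Omega}J(\cdot)j_\ve\,d\tilde s$ equals, after the normalisation $\ve^{-2}\cdot\ve\cdot\ve^{2/3}d\tilde q=\ve^{-1/3}d\tilde q$,
\[
\frac{1}{\ve^{1/3}}\Big(I_\ve\big(\tfrac{\tilde q-q}{\ve^{1/3}},\ve^{2/3}\tilde q\big)+\ve(|s|+\ve)\tilde{\mathcal O}_\ve\Big).
\]
The left-hand side of \eqref{formula_lemma51} is therefore exactly the operator in \eqref{eq_Lambda_ve} applied to $\phi_\ve$ (with $\phi_\ve=0$ for $q<0$), plus an error term
\[
E_\ve=\ve^{-1/3}(|s|+\ve)\int\tilde{\mathcal O}_\ve\,(\phi_\ve(q)-\phi_\ve(\tilde q))\,d\tilde q .
\]
Since $\tilde{\mathcal O}_\ve$ vanishes unless $|\tilde q-q|\le C\ve^{1/3}$, we have $|\phi_\ve(q)-\phi_\ve(\tilde q)|\le C\ve^{1/3}\sup|\phi_\ve'|$ on the support. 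The integral over $\tilde q$ is over a set of length $O(\ve^{1/3})$. Hence $|E_\ve|\le C(|s|+\ve)\ve^{1/3}$. To write this as $R_\ve\phi_\ve$, I need $E_\ve/\phi_\ve=O(\ve^{1/3}|s|+\ve^{1/2})$. This is fine where $\phi_\ve$ is bounded below. Near $q=0$ the same bound holds because $\phi_\ve\ge c\ve^{1/3}$ there (Corollary \ref{cor72}, Remark \ref{rem73}) and the difference $\phi_\ve(q)-\phi_\ve(\tilde q)$ is itself $O(\ve^{1/3})$ in that region.

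\emph{Interior region.} Define $R_\ve$ by \eqref{formula_lemma51}. For $q$ away from both ends, the computation \eqref{calculus}, together with the equation \eqref{easy_zadachka} for $\hat\phi_0$ and the choice of $\hat q_1$, gives \eqref{approx_solution} with remainder $O(\ve)\cdot(\text{derivatives of }\phi_{0\ve})$. I will convert this into a relative error $O(\ve^{1/2})\phi_\ve$. Here the Airy decay is the tool: $\phi_0''$ and $\phi_0^{(v)}$ at points shifted by $O(\ve^{1/3})$ are bounded by $C(1+q)^{N}\phi_0(q)$, and on $q\le\ve^{-1/24}$ the factor $(1+q)^N$ costs at most $\ve^{-N/24}$. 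With $\ve\cdot\ve^{-N/24}\le\ve^{1/2}$ for the relevant $N$, this gives the two-sided bound on the interior.

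\emph{Boundary layer at zero.} Near $q=0$ the functions $\varphi_1,\dots,\varphi_4$, the constants $q_0,\dots,q_3$ and the solvability of \eqref{eq_varphi} (Section \ref{BLS}) make the residual $O(\ve)$. Dividing by $\phi_\ve\ge c\ve^{1/3}$ gives a relative error $O(\ve^{2/3})$. The cutoffs $\chi$ only produce terms supported where the $\varphi_k$ are super-polynomially small in $\xi=\ve^{-1/3}q$, so those terms are negligible.

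\emph{Right end.} For $\ve^{-1/24}-q\le C\ve^{1/3}$, only the inequality displayed before the lemma is available. That supersolution estimate comes from the truncated integral and the choice of $\beta_\ve$ making $\phi_{0\ve}'(\ve^{-1/24})>0$. It gives only the lower bound in \eqref{Tse_Nashe_Vse}, which is exactly why the upper bound is claimed only on $[0,\ve^{-1/24}-C\ve^{1/3}]$.

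\emph{Main obstacle.} The hardest step is the uniform relative control $O(\ve^{1/2})$ over the whole interval $[0,\ve^{-1/24}]$. There $\phi_\ve$ ranges from order $\ve^{1/3}$ near zero to exponentially small near $\ve^{-1/24}$, so one must check carefully that each absolute error term is dominated by $\phi_\ve$ times the claimed rate. I would first try to do this using Airy asymptotics for ratios such as $\phi_0^{(k)}(q+h)/\phi_0(q)$ with $|h|\le C\ve^{1/3}$.
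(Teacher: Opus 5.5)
Your overall route is the paper's. The paper gives no separate proof of Lemma~\ref{normal_factor}; the lemma summarises the construction of Subsection~\ref{Consruct}. Your interior, boundary-layer and right-end steps reproduce that construction, including the reason the upper bound is claimed only away from $\ve^{-1/24}$.

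The step that fails as written is the reduction to one dimension. The outer integral in \eqref{formula_lemma51} is already taken in $\tilde q$, so no Jacobian $\ve^{2/3}$ appears. The prefactor is $\ve^{-2}\cdot\ve=\ve^{-1}$, and the left-hand side equals
\[
\frac{1}{\ve}\int I_\ve\big(\tfrac{\tilde q-q}{\ve^{1/3}},\ve^{2/3}\tilde q\big)\big(\phi_\ve(q)-\phi_\ve(\tilde q)\big)\,d\tilde q+(|s|+\ve)\int\tilde{\mathcal O}_\ve\,\big(\phi_\ve(q)-\phi_\ve(\tilde q)\big)\,d\tilde q .
\]
With your prefactor $\ve^{-1/3}$, the first term would be $\ve^{2/3}$ times the operator of \eqref{eq_Lambda_ve}. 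Your $E_\ve$ also carries a factor $\ve^{-1/3}$ where the correct factor is $1$. This is not cosmetic. From your bound $|E_\ve|\le C(|s|+\ve)\ve^{1/3}$, dividing by $\phi_\ve\ge c\ve^{1/3}$ near $q=0$ leaves only $C(|s|+\ve)$, not $C(\ve^{1/3}|s|+\ve^{1/2})$. So your sentence ``near $q=0$ the same bound holds'' is false as stated. With the correct scaling, $|E_\ve|\le C(|s|+\ve)\ve^{1/3}\sup_{|h|\le C\ve^{1/3}}|\phi_\ve(q+h)-\phi_\ve(q)|\le C(|s|+\ve)\ve^{2/3}$ near zero. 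Dividing by $c\ve^{1/3}$ then gives exactly the $\ve^{1/3}|s|$ term in \eqref{Tse_Nashe_Vse}.

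Second, ``fine where $\phi_\ve$ is bounded below'' does not cover $[1,\ve^{-1/24}]$. There $\phi_\ve$ decays like $\exp(-cq^{3/2})$, down to about $\exp(-c\ve^{-1/16})$, which is smaller than any power of $\ve$. So a global $\sup|\phi_\ve'|$ cannot be divided by $\phi_\ve$ on that interval. Use instead the local bound
\[
\sup_{|h|\le C\ve^{1/3}}|\phi_\ve'(q+h)|\le C(1+\sqrt q)\,\phi_\ve(q),
\]
which follows from the $\mathrm{Ai}$/$\mathrm{Bi}$ asymptotics. This is the same tool you already invoke for the remainder in \eqref{calculus}. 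It gives
\[
|E_\ve|/\phi_\ve\le C(|s|+\ve)\ve^{2/3}(1+\sqrt q)\le C(|s|+\ve)\ve^{2/3-1/48},
\]
which is admissible, but again only with the corrected scaling. With these two repairs your argument is the paper's.
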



\subsection{A one-dimensional equation on the half-line}
\label{BLS}
In this section we establish solvability of the equations appeared in  analysis of boundary layers 
in Section \ref{Consruct}. 

Consider the equation  
\begin{equation}
	-\int\limits_{0}^{\infty}
	I\big(\xi-\tilde\xi\big)v(\tilde\xi)d\tilde\xi+v(\xi)=f(\xi), \ \ \xi>0,
	\label{nu_nu4}
\end{equation}
 where
 \begin{equation}
 \label{trohy_umov}
 \begin{aligned}
& I, f \ \text{are continuous functions}, \ I(\xi)\geq 0,\ I(0)>0,\  I(\xi)=I(-\xi),\quad \int\limits_{-\infty}^{\infty} I(\xi)d\xi=1, \\
 &\exists R_0>0 \ \text{such that}\ I(\xi)=0 \ \text{if} \ |\xi|\geq R_0, \quad  f(\xi)\xi^n\to 0\ \text{as}\ \xi\to \infty, \forall n>0.
\end{aligned}
 \end{equation}
  We show that there exist a continuous solution $v(\xi)$ of this equation and a constant $C_v$ 
  such that $|v(\xi)-C_v|\xi^n\to 0$, $\forall n>0$.

{\it Step 1.} Consider solutions $v_N(\xi)$ of this equation on the finite interval $[0,N]$ for sufficiently large $N>0$:
\begin{equation*}
	-\int\limits_{0}^{N}
	I\big(\xi-\tilde\xi\big)v_N(\tilde\xi)d\tilde\xi+v_N(\xi)=f(\xi), \ \ 0\leq \xi\leq N.
\end{equation*}
The latter equation has a unique solution due to the Fredholm theorem. To show that functions $v_N(\xi)$ are uniformly bounded we construct the following barrier functions (supersolutions) 
\begin{equation*}
	W(\xi):=\frac{1}{R_0}-\frac{1}{2R_0+\xi},\ \xi\geq -R_0, \quad W(\xi):=0,  \ \xi<-R_0.
\end{equation*} 
Expanding  $W(\tilde\xi)=W(\xi)-W'(\xi)(\xi-\tilde\xi)+\frac{1}{2}W''(\xi)(\xi-\tilde\xi)^2-\frac{1}{6}W'''(\xi)(\xi-\tilde\xi)^3+\frac{1}{24}W^{(4)}(\xi^*)(\xi-\tilde\xi)^4$, we find
\begin{equation*}
\begin{aligned}
-\int\limits_{0}^{N}
	I\big(\xi-\tilde\xi\big)W(\tilde\xi)d\tilde\xi+W(\xi)&\geq \int\limits_{-\infty}^{\infty}I\big(\xi-\tilde\xi\big)\big(W(\xi)-W(\tilde\xi)\big)d\tilde\xi \\
	&= -\int\limits_{-\infty}^{\infty}I\big(\xi-\tilde\xi\big)\Big(\frac{1}{2}W''(\xi)(\xi-\tilde\xi)^2+\frac{1}{24}W^{(4)}(\xi^*)(\xi-\tilde\xi)^4\Big)d\tilde\xi\\ 
	&\geq 
	-\frac{1}{2}\, W''(\xi)\int\limits_{-\infty}^{\infty}I\big(\tau\big)\tau^2 d\tau=\frac{\theta}{(2R_0+\xi)^3},
	\label{nu_nu7}
\end{aligned}
\end{equation*}
where $\theta=\int\limits_{-\infty}^{\infty}I\big(\xi\big)\xi^2 d \xi$.
We can choose a  sufficiently large $k>0$ such that 
\begin{equation*}
	-\int\limits_{0}^{N}
	I\big(\xi-\tilde\xi\big)\Big(kW(\tilde\xi)-v_N(\tilde\xi)\Big)d\tilde\xi+kW(\xi)-v_N(\xi)\geq \frac{k\theta}{(2R_0+\xi)^3}-f(\xi)> 0, \ \ 0\leq \xi\leq N.
\end{equation*}
Then we have $kW(\xi)>v_N(\xi)$, due to the maximum principle \cite{LiCovWan2017}. 
Similarly,  $-kW(\xi)<v_N(\xi)$ for an appropriate finite number $k$. Notice that $k$ doesn't depend on $N$, therefore we can choose a sequence of numbers $N\to\infty$ such that $v_N(\xi)$ converge to a function $v(\xi)$ uniformly on compact subsets of $[0,\infty)$. Thus we have shown the existence of a bounded continuous solution $v$.

{\it Step 2.} It remains to show that $v(\xi)$ stabilizes to a constant as $\xi\to \infty$. To this end, observe that the function $v(\xi)$, continued  by zero for $\xi<0$, satisfies the equation
\begin{equation}
	-\int\limits_{-\infty}^{\infty}
	I\big(\xi-\tilde\xi\big)v(\tilde\xi)d\tilde\xi+v(\xi)=g(\xi), \ \ \xi\in \mathbb{R} ,
	\label{nu_nu9}
\end{equation}
with 
\begin{equation*}
	g(\xi)=
\begin{aligned}
\begin{cases} f(\xi) \quad \quad\quad\quad\quad\quad &\xi\geq 0; \\ 
		-\int\limits_{0}^{\infty}
		I\big(\xi-\tilde\xi\big)v(\tilde\xi)d\tilde\xi \quad &\xi< 0.
	\end{cases}
\end{aligned}
\end{equation*}
Applying the Fourier transform to \eqref{nu_nu9} we get
\begin{equation}
	-\hat I\big(\omega \big)\hat v(\omega)+\hat v(\omega)=\hat g(\omega), \quad \omega\in \mathbb{R}.
	\label{nu_nu11}
\end{equation}
The Fourier transforms $\hat I(\omega)= \int\limits_{-\infty}^{\infty} e^{-i \omega \xi} I(\xi)d \xi$ and $\hat g(\omega)$
are infinitely differentiable. These functions and their derivatives belong to $L^p(\mathbb{R})$, $\forall p>1$, and vanish at $\pm\infty$.
Also,  $\hat I\big(\omega \big)$ has the following properties: 
\begin{equation*}
\begin{aligned}
		\hat I\big(0 \big)=\int\limits_{-\infty}^{\infty} I\big(\xi\big)d\xi=1,  \ \text{and}\ 
	\hat I\big(\omega \big)=\int\limits_{-\infty}^{\infty}cos(\omega \xi)I\big(\xi\big)d\xi<1 \  \text{for} \ \omega \not =0, 
	\end{aligned}
\end{equation*}
and 
\begin{equation*}
	\hat I(\omega )=1-\frac{\omega^2}{2}\int\limits_{-\infty}^{\infty}z^2I(z)dz+
O(\omega^4)=1-\frac{\omega^2\theta}{2}+
O(\omega^4)
\end{equation*}
in a neighborhood of zero. Thus, resolving   \eqref{nu_nu11} with respect to $\hat v(\omega)$ and taking 
into account the fact that  $v(\xi)$ is a bounded function, we find
\begin{equation*}
\hat v(\omega)=\frac{1}{1-\hat I(\omega )}\hat g(\omega)+ C_1\delta(\omega), \ \ \omega\in \mathbb{R},
\end{equation*}
where $\delta(\omega)$ is the Dirac delta function centered at zero. Now represent $\hat v(\omega)$ as
\begin{equation*}
	\hat v(\omega)=\hat g(\omega) + U(\omega)\hat g(\omega)
	+  \hat I(\omega )\hat g(\omega)
	+\frac{2\hat g(\omega)}{\omega^2\theta}+C_1\delta(\omega), \ \ \omega\in \mathbb{R},
\end{equation*}
where
\begin{equation*}
U(\omega)=\frac{\hat I^2(\omega )}{1-\hat I(\omega )}-\frac{2}{\omega^2\theta}.
\end{equation*}
Then applying the inverse Fourier transform  we obtain 
\begin{equation}
 v(\xi)= g(\xi)+\frac{1}{2\pi}\int\limits_{-\infty}^{\infty} \bigl(U(\omega)\hat g(\omega)
	+  \hat I(\omega )\hat g(\omega)\bigr) e^{i\xi\omega} d\omega -\frac{1}{\theta}\int\limits_{-\infty}^{\infty}
|\xi-\tau| g(\tau)d\tau+\frac{C_1}{2\pi}.
\label{nu_vasche_kruto}
\end{equation}
(Recall that the inverse Fourier transform of $2/\omega^2$ is $-|\xi|$).


The function $U(\omega)\hat g(\omega)+ \hat I(\omega )\hat g(\omega)$ and all its derivatives belong to $L^1(\mathbb{R})$, therefore the second term in 
\eqref{nu_vasche_kruto} tends to zero as $\xi\to \pm\infty$ faster than any negative power of $|\xi|$. Since $v(\xi)=0$ for $\xi<0$, we can pass to the limit as $\xi\to -\infty $ in \eqref{nu_vasche_kruto} to find that $\int\limits_{-\infty}^{\infty} g(\tau) d\tau=0$ and $C_1=\frac{2\pi}{\theta}\int\limits_{-\infty}^{\infty}
\tau g(\tau)d\tau $. Thus $|v(\xi)-C_v|\xi^n\to 0$ as $\xi\to +\infty$, $\forall n>0$, 
with $C_v=\frac{2}{\theta}\int\limits_{-\infty}^{\infty} \tau g(\tau) d\tau$.

We summarize the obtained result in the following 
\begin{thm} Under conditions \eqref{trohy_umov} there exists a bounded continuous solution $v$ of problem \eqref{nu_nu4} and there is a constant $C_v$ such that $|v(\xi)-C_v|\xi^n\to 0$ as $\xi\to \infty$, $\forall n>0$. Moreover, if $f(\xi)\geq 0$ then $v(\xi)\geq 0$,  $C_v\geq 0$ (and $C_v> 0$ unless $f\equiv 0$).
\end{thm}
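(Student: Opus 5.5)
The theorem collects Steps 1 and 2 above, so the existence part and the stabilization $|v(\xi)-C_v|\xi^n\to 0$ with $C_v=\frac{2}{\theta}\int_{-\infty}^{\infty}\tau g(\tau)\,d\tau$ are taken from there. The plan is therefore to record that the limit $v$ of the $v_N$ solves \eqref{nu_nu4}, and then to prove the sign statements.

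For existence, I use the uniform bound $|v_N|\le kW\le k/R_0$ from Step 1. The functions $v_N=f+\int_0^N I(\cdot-\tilde\xi)v_N(\tilde\xi)\,d\tilde\xi$ are equicontinuous on compacts, because $I$ is uniformly continuous with compact support and $v_N$ is uniformly bounded. Arzelà–Ascoli then gives a subsequence converging locally uniformly. For fixed $\xi$ only $\tilde\xi\in[\xi-R_0,\xi+R_0]$ contribute to the integral, so we may pass to the limit in the equation, and $v$ is a bounded continuous solution of \eqref{nu_nu4}. The decay of $f$ together with the compact support of $I$ ensures $g\in L^1$ with all moments finite. Indeed, for $\xi<0$ the function $g$ vanishes when $\xi<-R_0$, so $g$ has compact support on the negative side. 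This justifies the Fourier argument of Step 2.

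For positivity, assume $f\ge 0$. By the maximum principle \cite{LiCovWan2017} on $[0,N]$, applied to $-\int_0^N I(\xi-\tilde\xi)v_N\,d\tilde\xi+v_N=f\ge0$, we get $v_N\ge0$: at a negative minimum point $\xi_m$ we would have $v_N(\xi_m)-\int_0^N I v_N\ge v_N(\xi_m)\bigl(1-\int_0^N I(\xi_m-\tilde\xi)\,d\tilde\xi\bigr)$. This is $\le 0$, and it is $<0$ because $\int_0^N I<1$ near the ends, or by propagating the minimum via $I(0)>0$. Hence $v\ge 0$ and consequently $C_v=\lim_{\xi\to\infty}v(\xi)\ge 0$.

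The main obstacle is strict positivity $C_v>0$ when $f\not\equiv0$. I would use the formula $C_v=\frac{2}{\theta}\int\tau g(\tau)\,d\tau$ together with $\int g=0$ from Step 2. On $\xi<0$ we have $g=-\int_0^\infty I(\xi-\tilde\xi)v\,d\tilde\xi\le0$, and on $\xi\ge 0$ we have $g=f\ge0$. Thus
\[
\int_{-\infty}^{\infty}\tau g(\tau)\,d\tau=\int_0^\infty \tau f(\tau)\,d\tau+\int_{-\infty}^0|\tau|\,|g(\tau)|\,d\tau\ge0.
\]
Equality forces $f=0$ a.e. on $(0,\infty)$ and $g=0$ on $(-\infty,0)$, hence $f\equiv0$ by continuity. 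So $C_v>0$ unless $f\equiv0$.
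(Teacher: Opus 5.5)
Your proposal is correct and follows the paper's route. Existence and the stabilization with $C_v=\frac{2}{\theta}\int\tau g(\tau)\,d\tau$ come from Steps 1--2, and your sign argument correctly supplies the ``moreover'' part, which the paper states without proof: $v_N\ge 0$ by the maximum principle, and then $\tau g(\tau)\ge 0$ on all of $\mathbb{R}$, since $g=f\ge 0$ for $\tau\ge 0$ and $g\le 0$ for $\tau<0$.

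There is one slip. At a negative minimum $\xi_m$ the inequality should read $f(\xi_m)=v_N(\xi_m)-\int_0^N I(\xi_m-\tilde\xi)v_N(\tilde\xi)\,d\tilde\xi\le v_N(\xi_m)\bigl(1-\int_0^N I(\xi_m-\tilde\xi)\,d\tilde\xi\bigr)$. You wrote $\ge$, and with $\ge$ no contradiction follows. Alternatively, you can avoid the propagation argument by applying Step 1's strict comparison to $v_N+\sigma W$ and letting $\sigma\to 0$.
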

\begin{cor}
	\label{cor72} There exists a bounded continuous solution $\tilde\varphi (\xi)$ of the problem
	\begin{equation*}
		-\int\limits_{0}^{\infty}
		I_0\big(\xi-\tilde\xi\big)\tilde\varphi(\tilde\xi)d\tilde\xi+
	\tilde\varphi(\xi)=-\int\limits_{-\infty}^{0}I_0\big(\xi-\tilde\xi\big)\tilde\xi d\tilde\xi, \ \ \xi\geq 0
	\end{equation*}
(where $I_0$ is given by \eqref{I_0}). Moreover, it converges as $\xi\to\infty$ (faster than any negative power of $\xi$) to some constant $q_0>0$, 
\begin{equation*}
q_0=\lim_{\xi\to\infty}\tilde \varphi(\xi),
\end{equation*} 	
and the function $\xi+\tilde\varphi(\xi)$ is bounded away form zero on $[0,+\infty)$,
\begin{equation*}
\min_{\xi\in[0;\infty)}(\xi +\tilde \varphi(\xi))> 0.
\end{equation*}
\end{cor}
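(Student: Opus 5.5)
We plan to obtain Corollary \ref{cor72} directly from the preceding theorem on the half-line problem \eqref{nu_nu4}, applied with $I=I_0$ and $f(\xi)=-\int_{-\infty}^{0}I_0(\xi-\tilde\xi)\tilde\xi\,d\tilde\xi$.

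\textbf{Checking the hypotheses \eqref{trohy_umov}.} By \eqref{I_0}, $I_0(z)=\int_{\mathbb{R}}J(s^2+z^2)\,ds$. Since $J$ is continuous, nonnegative and compactly supported, $I_0$ is continuous, even, nonnegative and compactly supported. Moreover $I_0(0)>0$ because $J(0)>0$. Finally, $\int_{\mathbb{R}} I_0=\int_{\mathbb{R}^2}J(|\xi|^2)d\xi=1$ by \eqref{umova_try}. The right-hand side $f$ is continuous and nonnegative, since $-\tilde\xi\ge0$ on the domain of integration. It vanishes for $\xi\ge R_0$ by compactness of the support, so the decay requirement holds trivially. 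Also $f\not\equiv 0$, since $f(0)=\int_{-\infty}^0 I_0(\tilde\xi)|\tilde\xi|\,d\tilde\xi>0$. The theorem therefore yields a bounded continuous solution $\tilde\varphi\ge0$ with $\tilde\varphi(\xi)\to C_v=:q_0$ faster than any power of $\xi^{-1}$, and $q_0>0$ by the strict positivity clause. This is exactly the $\tilde\varphi=\varphi$ needed in \eqref{eq_varphi}, with $q_0$ absorbed: indeed $\varphi=\tilde\varphi-q_0$ solves \eqref{eq_varphi}, because $1-\int_0^\infty I_0(\tilde\xi-\xi)d\tilde\xi=\int_{-\infty}^0 I_0(\tilde\xi-\xi)d\tilde\xi$.

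\textbf{Positivity of $\xi+\tilde\varphi(\xi)$.} This is the main point. Since $\tilde\varphi\ge 0$, we have $\xi+\tilde\varphi(\xi)\ge\xi$, so the only issue is near $\xi=0$. Moreover $\xi+\tilde\varphi(\xi)\to\infty$, so by continuity it suffices to show $\xi+\tilde\varphi(\xi)>0$ for every $\xi\ge0$; compactness then gives a positive minimum. For $\xi>0$ this is immediate from $\tilde\varphi\ge0$. At $\xi=0$ we need $\tilde\varphi(0)>0$. Evaluate the equation at $\xi=0$:
\[
\tilde\varphi(0)=f(0)+\int_0^\infty I_0(\tilde\xi)\tilde\varphi(\tilde\xi)\,d\tilde\xi\ge f(0)>0 .
\]

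\textbf{Where the difficulty lies.} The only delicate ingredient is the sign information $v\ge0$, $C_v>0$ inside the quoted theorem. If I had to justify it, I would argue as follows. The approximants $v_N$ are nonnegative by the maximum principle of \cite{LiCovWan2017}, since $f\ge0$, and hence so is the limit $v$. For strict positivity of $C_v$, I would use the formula $C_v=\frac{2}{\theta}\int\tau g(\tau)\,d\tau$ together with $\int g=0$. On $\xi\ge0$ we have $g=f\ge0$, while on $\xi<0$ we have $g\le0$; moreover $g\not\equiv0$. Hence $\int\tau g=\int_0^\infty\tau f+\int_{-\infty}^0|\tau||g|>0$.
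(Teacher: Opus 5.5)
Your proposal is correct. For existence, for the convergence $\tilde\varphi\to q_0$ and for $q_0>0$, it does what the paper implicitly does: it invokes the preceding half-line theorem with $I=I_0$ and the nonnegative, compactly supported right-hand side $f(\xi)=-\int_{-\infty}^0 I_0(\xi-\tilde\xi)\tilde\xi\,d\tilde\xi$.

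Where you genuinely differ is the bound $\min_{[0,\infty)}(\xi+\tilde\varphi(\xi))>0$. The paper never uses the sign of $\tilde\varphi$. It observes that $U=\xi+\tilde\varphi$ solves the homogeneous equation $-\int_0^\infty I_0(\xi-\tilde\xi)U(\tilde\xi)\,d\tilde\xi+U(\xi)=0$ and runs a strong-maximum-principle argument. If the minimum value were $\le 0$, then the identity $U(\xi_m)\int_{-\infty}^0 I_0(\xi_m-\tilde\xi)\,d\tilde\xi=\int_0^\infty I_0(\xi_m-\tilde\xi)\bigl(U(\tilde\xi)-U(\xi_m)\bigr)\,d\tilde\xi$, together with $I_0(0)>0$, makes the set of minimum points open and closed. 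It is then all of $[0,\infty)$, contradicting $U\to\infty$.

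You instead use $\tilde\varphi\ge 0$ and evaluate the equation at $\xi=0$ to get $\tilde\varphi(0)\ge f(0)>0$.
\begin{itemize}
\item Your route is shorter and gives an explicit lower bound: the same equation shows $\tilde\varphi\ge f$, hence $U(\xi)\ge \xi+f(\xi)$. However, it leans on the nonnegativity clause $v\ge 0$ of the theorem, which the paper states but does not actually prove in Steps 1 and 2. Your sketch, nonnegativity of the approximants $v_N$ via the maximum principle and passage to the limit, is a correct way to fill that in.
\item The paper's argument needs only continuity, boundedness of $\tilde\varphi$ and the equation. So it applies to any bounded continuous solution, not just the one built as a limit of nonnegative $v_N$.
\end{itemize}

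Both routes still rely on the theorem's clause $C_v>0$ to get $q_0>0$. Your justification of that clause is sound: it uses $C_v=\frac{2}{\theta}\int\tau g(\tau)\,d\tau$, $\int g=0$, $g=f\ge 0$ on $[0,\infty)$ and $g\le 0$ on $(-\infty,0)$.
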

\begin{proof}
For every $\xi\geq 0$ we have
\begin{equation*}
	\begin{aligned}
		-\int\limits_{0}^{\infty}
	I_0\big(\tilde\xi-\xi\big) \big(\tilde\xi+\tilde \varphi(\tilde\xi)\big) d\tilde\xi+
\xi+\tilde\varphi(\xi)
		=-\int\limits_{-\infty}^{\infty} I_0\big(\tilde\xi-\xi\big)
		\tilde\xi d\tilde\xi+\xi=0.
	\end{aligned}
\end{equation*}
Therefore $U(\xi)=\xi+\tilde \varphi(\xi)$ satisfies
	\begin{equation}
	-\int\limits_{0}^{\infty}
	I_0\big(\xi-\tilde\xi\big)U(\tilde\xi)d\tilde\xi+
	U(\xi)=0, \ \ \xi\geq 0.
	\label{cor_p3}
\end{equation}

Let $\xi_m$ be a minimum point of $U(\xi)$ on $[0,\infty)$. We claim  that  $U(\xi_m)>0$.
Assume by contradiction that $U(\xi_m)\leq 0$.
From \eqref{cor_p3} we have 
\begin{equation*}
	\int\limits_{-\infty}^{\infty}
	I_0\big(\xi_m-\tilde\xi\big)U(\xi_m)d\tilde\xi=U(\xi_m)=\int\limits_{0}^{\infty}
	I_0\big(\xi_m-\tilde\xi\big)U(\tilde\xi)d\tilde\xi 
\end{equation*}
and, consequently,
\begin{equation}
	\int\limits_{-\infty}^{0}
	I_0\big(\xi_m-\tilde\xi\big)U(\xi_m)d\tilde\xi=\int\limits_{0}^{\infty}
	I_0\big(\xi_m-\tilde\xi\big)\Big(U(\tilde\xi)-U(\xi_m)\Big)d\tilde\xi. 	\label{cor_p4}
\end{equation}
Since  $I_0(z)\geq 0$,  the left-hand side of \eqref{cor_p4} is nonpositive while the right-hand side 
is nonnegative. Moreover, as $I_0(0)>0$, $U(\xi)=U(\xi_m)$ for all $\xi$ in a neighborhood of $\xi_m$ (within $[0,\infty)$). Therefore the set of minimum points of the function $U(\xi)$ is open (in the induced topology of $[0,\infty)$), and at the same time it is closed by the continuity of $U(\xi)$. Since $[0,\infty)$ is connected and $U(\xi)\to \infty $ as $\xi\to\infty$, the set of minimum points of $U(\tilde\xi)$  must be empty, a contradiction. Thus $U(\xi)\geq U(\xi_m)>0$.  
\end{proof}

 \begin{rem} 
 	\label{rem73} It is straightforward to see that the function $\varphi(\xi):=\tilde\varphi(\xi)-q_0$ is a continuous solution of \eqref{eq_varphi} decaying faster than any negative power of $\xi$, 
 	as $\xi\to\infty$. Then, since the function $\phi_1$ in the definition \eqref{phi_ve_last} of $\phi_\ve$ is given by $\phi_1(\xi)=\phi_0^\prime(0)\varphi(\xi)$, we have  	
$$
 	\phi_\ve(\ve^{1/3} \xi)=\ve^{1/3} \phi_0^\prime(0)(\xi+\tilde\phi(\xi))+O(\ve^{2/3})\geq c \ve^{1/3}, \quad c>0,
$$
when $0\leq \xi \leq C$.
 \end{rem}

\subsection{Factorization and representation for energy}
\label{Factor_Repr}

Consider the functional
\begin{equation*}
	\Phi_\ve(\rho_\ve)=	\frac{1}{2\ve^2} \int\limits_{\mathbb{R}^2\setminus \tilde\Omega_\ve} \int\limits_{\mathbb{R}^2\setminus \tilde\Omega_\ve} J\left(\frac{|x-y|^2}{\ve^2}\right)(\rho_\ve(x)-\rho_\ve(y))^2dxdy, 
\end{equation*}
where $\tilde\Omega_\ve=\{x\in\Omega: {\rm dist}(x,\partial\Omega)\geq \ve^{5/8}\}$.
Performing the change of variables $x=X(r,s)$ and $r=\ve^{2/3}q$, 
and setting
\begin{equation}
	\psi_\ve\left(\frac{s}{\ve^{1/2}}\right):=\frac{\rho_\ve(X(\ve^{2/3}q,s))}{ \phi_\ve(q)},\  \text{i. e.}\quad  \rho_\ve(X(\ve^{2/3}q,s))=\phi_\ve(q)\psi_\ve\left(\frac{s}{\ve^{1/2}}\right)
	\label{factorization}
\end{equation}
we have
\begin{equation*}
	\begin{aligned}
		\Phi_\ve(\rho_\ve)&=	\frac{1}{2\ve^{2/3}} \int\limits_{-\infty}^{\ve^{-1/24}} \int\limits_{-\infty}^{\ve^{-1/24}} 
		\int\limits_{\partial\Omega} \int\limits_{\partial\Omega}
		 \  J\left( 
		\frac{|X(\ve^{2/3} q,s)-X(\ve^{2/3}\tilde q,\tilde s)|^2}{\ve^2}\right) \\ 
		&\times\left(\phi_\ve(q) \psi_\ve\left(q,\frac{s}{\ve^{1/2}}\right) 
		-\phi_\ve(\tilde q) \psi_\ve\left(\tilde q,\frac{\tilde s}{\ve^{1/2}}\right)
		\right)^2 j_\ve(q,s)j_\ve(\tilde q,\tilde  s) dsd\tilde s dqd\tilde q
		\end{aligned} 
\end{equation*}
where $j_\ve$ is defined by \eqref{j_ve}. 
Then,
we find after some 
algebra\footnote{We use the following representation 
 \begin{equation*}
  	\begin{aligned}
&\Bigl( \phi_\ve(q) \psi_\ve(q,t)  -\phi_\ve(\tilde q) \psi_\ve( \tilde q,\tilde t) \Bigr)^2\\ 
&= \phi_\ve(q) \phi_\ve(\tilde q)
\left(\psi_\ve(q,t) -\psi_\ve( \tilde q,\tilde t)\right)^2 +
\left(\phi_\ve(q) - \phi_\ve(\tilde q)\right)
\left(\phi_\ve(q) \psi^2_\ve(q,t) -\phi_\ve(\tilde q) \psi^2_\ve( \tilde q,\tilde t)\right)
  \end{aligned} 
\end{equation*}
}

\begin{equation}
	\begin{aligned}
		\Phi_\ve&(\rho_\ve)=	\frac{1}{2\ve^{2/3}} \int\limits_{-\infty}^{\ve^{-1/24}} \int\limits_{-\infty}^{\ve^{-1/24}} 
			\int\limits_{\partial\Omega} \int\limits_{\partial\Omega}
		  J\left( 
		\frac{|X(\ve^{2/3} q,s)-X(\ve^{2/3}\tilde q,\tilde s)|^2}{\ve^2}\right) \\ 
		&\quad\times\left( \psi_\ve\left(q,\frac{s}{\ve^{1/2}}\right) 
		-\ \psi_\ve\left(\tilde q,\frac{\tilde s}{\ve^{1/2}}\right)
		\right)^2 \phi_\ve( q) j_\ve(q,s)\phi_\ve(\tilde q)j_\ve(\tilde q,\tilde  s) dsd\tilde s dqd\tilde q\\
		&+	\frac{1}{\ve^{2/3}} \int\limits_{-\infty}^{\ve^{-1/24}}  \int\limits_{\partial\Omega}   
		\phi_\ve( q) \psi_\ve^2\left(q,\frac{s}{\ve^{1/2}}\right)  j_\ve(q,s) \\
		&\quad\times
		\int\limits_{-\infty}^{\ve^{-1/24}} 
		 \Biggl( \int\limits_{\partial\Omega}
		 \  J\left( 
		\frac{|X(\ve^{2/3} q,s)-X(\ve^{2/3}\tilde q,\tilde s)|^2}{\ve^2}\right)  j_\ve(\tilde q,\tilde s) d\tilde s
	         \left( \phi_\ve(q)-\phi_\ve(\tilde q)\right)\Biggr) d\tilde q 
		 ds dq\\ 
	&	 =:\Phi_{1,\ve}(\rho_\ve)+\Phi_{2,\ve}(\rho_\ve),
		\end{aligned} 
	\label{Phi_ve}
\end{equation}
where, thanks to \eqref{formula_lemma51},  $\Phi_{2,\ve}$ can be written as
\begin{equation}
\begin{aligned}
\Phi_{2,\ve}(\rho_\ve)= \ve^{4/3}\int\limits_{-\infty}^{\ve^{-1/24}}  \int\limits_{\partial\Omega}    
		\big(\Lambda_\ve -\alpha q+R_\ve(q,s)\big)\phi^2_\ve(q) \psi_\ve^2\left(q,\frac{s}{\ve^{1/2}}\right)  j_\ve(q,s)
		 ds dq
\end{aligned} 
\label{Psi_ve}
\end{equation}
with $R_\ve(q,s)$ satisfying \eqref{Tse_Nashe_Vse}.

%
%
\subsection{Upper bounds}
\label{upperbound}

Given a function $\psi(t)\in C_c^\infty (\mathbb{R})$, we construct the recovery sequence by setting 
\begin{equation*}
	\rho_\ve\left(X(\ve^{2/3}q,\ve^{1/2}t)\right)=\phi_\ve(q)\psi(t)
	\end{equation*}
for $0\leq q\leq \ve^{-1/24}$. This formula defines $\rho_\ve$ in the near boundary layer $ \Omega\setminus\tilde\Omega_\ve$.  For $x\not\in \Omega\setminus\tilde\Omega_\ve$ we set $	\rho_\ve(x)=0$. 
Substitute  $\rho_\ve$ into the functional \eqref{funcF} 
to find, using  \eqref{yadro}-\eqref{jacobian} and the representation \eqref{Phi_ve}-\eqref{Psi_ve},
\begin{equation*}
\begin{aligned}
F_\ve&(\rho_\ve)=	
\frac{1}{2 \ve^{11/6}}
\int\limits_{0}^{\ve^{-1/24}} \int\limits_{0}^{\ve^{-1/24}} 
		\int\limits_{\mathbb{R}} \int\limits_{\mathbb{R}}
		J\Bigl(\frac{(\tilde q-q)^2}{\ve^{2/3}}+\frac{(\tilde t-t)^2}{\ve}
		\Bigr) 
	\left( \psi(\tilde t) 
		-\psi(t)
		\right)^2 \phi_\ve( q) \phi_\ve(\tilde q)
	dtd\tilde t dqd\tilde q\\
		&+
\frac{1}{\ve}
\int\limits_{0}^{\ve^{-1/24}} 
		\int\limits_{\mathbb{R}}
\left( m\left(X(\ve^{2/3}q,\ve^{1/2}t)\right)-m(0)-\alpha \ve^{2/3}q+\ve+\ve^{2/3}R_\ve (q,\ve^{1/2}t)\right) \psi^2(t)\phi^2_\ve(q)dtdq\\
&+o(1).
\end{aligned}
\end{equation*}
We set $\tau=(\tilde t-t)/\ve^{1/2}$ and using the Taylor expansions  $\psi(t+\ve^{1/2}\tau)- \psi(t)=\ve^{1/2}\tau \psi'(t)+\int\limits_0^{\ve^{1/2}\tau}\psi''(\ve^{1/2}\tau-\eta)d\eta$ and $m\left(X(\ve^{2/3}q,\ve^{1/2}t)\right)-m(0)= \alpha\ve^{2/3}q+b\ve q^2 +O(\ve^{4/3}(1+|q|^3))$, we obtain
\begin{equation*}
\begin{aligned}
F_\ve(\rho_\ve)=&	
\frac{1}{2 \ve^{1/3}} 
\int\limits_{0}^{\ve^{-1/24}} \int\limits_{0}^{\ve^{-1/24}} 
		\int\limits_{\mathbb{R}} \int\limits_{\mathbb{R}} J\Bigl(\frac{(\tilde q-q)^2}{\ve^{2/3}}+\tau^2
		\Bigr) \left(\tau \psi^\prime (t)\right)^2 d\tau  dt \phi_\ve( q) \phi_\ve(\tilde q) dq d\tilde q
\\
		&+
\frac{1}{\ve}
\int\limits_{0}^{\ve^{-1/24}} 
		\int\limits_{\mathbb{R}}
\left( (1+bt^2)\ve
+ \ve^{2/3}R_\ve (q,\ve^{1/2}t)\right) \psi^2(t)\phi^2_\ve(q)dtdq+ +o(1).
\end{aligned}
\end{equation*}
Thus we have proved 
\begin{lem}
	\label{Lemma71}
For any $\psi\in C_c^\infty(\mathbb{R})$, 
\begin{equation*}
\begin{aligned}
F_\ve(\rho_\ve)=
\frac{\theta}{2} \int\limits_{\mathbb{R}} (\psi^\prime (t))^2  dt +
\int\limits_{\mathbb{R}}(1+bt^2) \psi^2(t)dt+o(1)
\end{aligned}
\end{equation*}
with 	$ \rho_\ve(X(\ve^{2/3}q, \ve^{1/2}t))=\phi_\ve(q)\psi(t)$ in $\tilde\Omega_\ve$
and  $ \rho_\ve=0$ otherwise.
\end{lem}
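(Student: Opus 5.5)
We take the recovery sequence $\rho_\ve(X(\ve^{2/3}q,\ve^{1/2}t))=\phi_\ve(q)\psi(t)$ and substitute it into \eqref{funcF}. We split the nonlocal part of the energy into the interaction inside the layer $\mathbb{R}^2\setminus\tilde\Omega_\ve$, which is $\Phi_\ve(\rho_\ve)$, and the cross terms with $\tilde\Omega_\ve$.

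\textbf{Step 1: the cross terms.} Since $J$ has compact support, the cross terms involve only points with $q\in[\ve^{-1/24}-C\ve^{1/3},\ve^{-1/24}]$. There $\phi_\ve$ is super-exponentially small: it is of size $\exp(-c\ve^{-1/16})$, because $\phi_{0\ve}$ is an Airy function plus the tiny $\beta_\ve{\rm Bi}$ correction. After division by $\ve^{13/6}$ these terms are still $o(1)$. We then apply the factorization \eqref{Phi_ve}, which writes $\Phi_\ve=\Phi_{1,\ve}+\Phi_{2,\ve}$.

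\textbf{Step 2: the term $\Phi_{2,\ve}$.} We rewrite $\Phi_{2,\ve}$ via Lemma \ref{normal_factor}, i.e. via \eqref{Psi_ve}. The contributions $\ve^{2/3}\Lambda_\ve\phi_\ve^2\psi^2$ and $-\alpha\ve^{2/3}q\phi_\ve^2\psi^2$ cancel exactly against the corresponding parts of the potential term $(m-m(0)-\ve^{2/3}\Lambda_\ve+\ve)\rho_\ve^2$. The remainder $\ve^{2/3}R_\ve\phi_\ve^2\psi^2$ is controlled by \eqref{Tse_Nashe_Vse}. The bound $|R_\ve|\le C(\ve^{1/2}+\ve^{1/3}|s|)$ holds for $q\le \ve^{-1/24}-C\ve^{1/3}$, and $|s|\le C\ve^{1/2}$ on ${\rm supp}\,\psi$. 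So $R_\ve$ is $O(\ve^{1/2}+\ve^{5/6})$ there, and after normalization by $\ve^{13/6}$ and the Jacobian scaling $\ve^{7/6}$ its contribution is $O(\ve^{1/6})\to 0$. Only the lower bound on $R_\ve$ is available near the right endpoint, but that region is again negligible by the smallness of $\phi_\ve$.

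\textbf{Step 3: the potential and $\Phi_{1,\ve}$.} We use the Taylor expansion $m(X(r,s))=m(0)+\alpha r+\tfrac12(\alpha\kappa(0)+\partial^2_{x_2}m(0))s^2+O(r^2+|r||s|+|s|^3)$. Here the curvature term enters through $m$ expressed in $(r,s)$ coordinates, which is precisely why $b$ contains $\alpha\kappa(0)$. With $r=\ve^{2/3}q$, $s=\ve^{1/2}t$, the potential leaves $(1+bt^2)\ve\,\psi^2\phi_\ve^2$ plus errors of order $\ve^{7/6}$, times polynomial weights in $q$. These errors vanish after normalization since $\phi_\ve$ decays rapidly in $q$.

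For $\Phi_{1,\ve}$ we use \eqref{yadro}--\eqref{jacobian} to replace the kernel by $J\bigl((\tilde q-q)^2\ve^{-2/3}+\tau^2\bigr)$ with $\tau=(\tilde t-t)/\ve^{1/2}$, at relative cost $O(\ve^{1/3})$. We Taylor-expand $\psi(t+\ve^{1/2}\tau)-\psi(t)$ to get $\ve\tau^2\psi'(t)^2$ plus higher order. We then integrate in $\tilde q$, writing $\tilde q=q+\ve^{1/3}z$ and using $\phi_\ve(\tilde q)=\phi_\ve(q)+O(\ve^{1/3})$, and integrate in $\tau$ to obtain $\int\!\int J(z^2+\tau^2)\tau^2\,dz\,d\tau=\vartheta/2=\theta$. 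Finally $\int\phi_\ve^2\,dq\to\int\phi_0^2=1$ gives $\frac{\theta}{2}\int(\psi')^2$; the factor $\frac12$ comes from the $\frac12$ in the quadratic form.

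The main obstacle is bookkeeping the powers of $\ve$ in the error terms, especially the contribution of $R_\ve$ and of the $q$-dependent Jacobian $j_\ve$. We handle it by using the compact support of $\psi$, so that $|s|\lesssim\ve^{1/2}$, together with the uniform rapid decay of $\phi_\ve$ in $q$.
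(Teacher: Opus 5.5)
Your proposal is correct and follows essentially the paper's argument. You substitute the factorized recovery sequence, use \eqref{Phi_ve}--\eqref{Psi_ve} so that the $\Lambda_\ve$ and $\alpha q$ terms cancel against the potential, Taylor-expand $m$ and $\psi$, and use \eqref{yadro}--\eqref{jacobian} to reduce $\Phi_{1,\ve}$ to $\frac{\theta}{2}\int(\psi')^2$; the cross terms with $\tilde\Omega_\ve$ and the right-end strip, which the paper silently absorbs into $o(1)$, are handled by the super-exponential smallness of $\phi_\ve$ there.

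One justification needs tightening: in that strip, smallness of $\phi_\ve$ alone gives no upper bound on $R_\ve\phi_\ve^2$, since only a lower bound on $R_\ve$ is available. However, $R_\ve\phi_\ve$ equals the left side of \eqref{formula_lemma51} minus $(\Lambda_\ve-\alpha q)\phi_\ve$. It is therefore at most $C\ve^{-2/3}$ times the local maximum of $\phi_\ve$, which suffices.
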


The following result allows one to pass to the dense set 
$C_c^\infty(\mathbb{R})$ when checking $\Gamma$-$\limsup$ and $\Gamma$-$\liminf$ inequalities.

\begin{lem} $\forall \,\psi \in H^1(\mathbb{R})$, let $\rho_\ve$ be given by 
$ \rho_\ve(X(\ve^{2/3}q, \ve^{1/2}t))=\phi_\ve(q)\psi_\ve(t)$ in $\tilde\Omega_\ve$, where $\psi_\ve(t)=\psi(t)\left(1-\chi\big(t-\frac{L}{2\ve^{1/2}}\big)-\chi\big(-t-\frac{L}
	{2\ve^{1/2}}\big)\right)$,
	 and  $ \rho_\ve=0$ otherwise. Then 
\begin{equation*}
F_\ve(\rho_\ve)\leq C\left(\int\limits_{\mathbb{R}} (\psi_\ve^\prime (t))^2  dt +\int\limits_{\mathbb{R}} \psi_\ve^2(t)(1+t^2) dt\right).
\end{equation*} 
\label{Lemma72}
\end{lem}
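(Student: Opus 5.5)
The bound follows from the factorized representation \eqref{Phi_ve}--\eqref{Psi_ve} of the energy, applied to $\rho_\ve=\phi_\ve(q)\psi_\ve(t)$. The function $\psi_\ve$ has been cut off, so that it is supported in $|t|\le L/(2\ve^{1/2})$ and is single-valued on $\partial\Omega$. Since $\rho_\ve$ is of product form, the factor $\psi_\ve(q,t)$ in \eqref{Phi_ve} does not depend on $q$. The first term $\Phi_{1,\ve}$ then contains $(\psi_\ve(t)-\psi_\ve(\tilde t))^2$ multiplied by $J\,\phi_\ve(q)\phi_\ve(\tilde q)j_\ve j_\ve$. We write $F_\ve(\rho_\ve)=\ve^{-13/6}\big(\Phi_{1,\ve}+\Phi_{2,\ve}+\int_\Omega(m-m(0)-\ve^{2/3}\Lambda_\ve+\ve)\rho_\ve^2\big)$ plus the interaction terms between $\tilde\Omega_\ve$ and its complement. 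The latter involve $\rho_\ve$ only within distance $C\ve$ of $\{r=\ve^{5/8}\}$, where $\phi_\ve$ is exponentially small. We then estimate each piece by the right-hand side.

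\emph{Step 1 (gradient part).} In $\Phi_{1,\ve}$ we use $0\le j_\ve\le 2$ and $\phi_\ve\le C$. We bound the $J$-kernel in the $(s,\tilde s)$ variables using \eqref{yadro}: on the support of $J$ one has $|s-\tilde s|\le C\ve$ and $|q-\tilde q|\le C\ve^{1/3}$. For fixed $q$ this gives $\int J\,d\tilde q\le C\ve^{1/3}$. We then apply the difference-quotient estimate of Lemma \ref{EP} in one dimension, namely $(\psi_\ve(t+\ve^{1/2}\tau)-\psi_\ve(t))^2\le \ve\tau^2\int_0^1|\psi_\ve'(t+\ve^{1/2}\tau\sigma)|^2d\sigma$. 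Collecting the powers of $\ve$ exactly as in the computation preceding Lemma \ref{Lemma71} yields $\ve^{-13/6}\Phi_{1,\ve}\le C\int(\psi'_\ve)^2dt\cdot\sup_q\int\phi_\ve(q)\,dq$-type factors. Since $\int_0^{\ve^{-1/24}}\phi_\ve$ is bounded uniformly (Airy decay, with the ${\rm Bi}$-correction of size $\beta_\ve$ tiny), this gives $C\|\psi_\ve'\|^2_{L^2}$.

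\emph{Step 2 (potential part).} We add $\Phi_{2,\ve}$ from \eqref{Psi_ve} to the potential term. The $\Lambda_\ve$ contributions cancel, and so do the $\alpha\ve^{2/3}q$ contributions against the linear part of $m$. What remains is
\[
\ve^{-1}\int\!\!\int\big(m(X(\ve^{2/3}q,\ve^{1/2}t))-m(0)-\alpha\ve^{2/3}q+\ve+\ve^{2/3}R_\ve\big)\phi_\ve^2\psi_\ve^2 j_\ve\,dq\,dt .
\]
By Taylor expansion and $m\in C^2$, the bracket is bounded by $C(\ve^{4/3}q^2+\ve t^2+\ve)$, uniformly for $|s|\le L/2$. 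Since $|s|$ is bounded, the $C^2$ remainder is controlled by $C|X|^2\le C(\ve^{4/3}q^2+\ve t^2)$. Using \eqref{Tse_Nashe_Vse}, the upper bound on $R_\ve$ gives $\ve^{2/3}|R_\ve|\le C(\ve^{7/6}+\ve^{1/2}\cdot\ve^{1/2}\cdot\ve^{1/3}|t|)\le C\ve(1+t^2)$. Near the right end $q\in[\ve^{-1/24}-C\ve^{1/3},\ve^{-1/24}]$ only the lower bound on $R_\ve$ is available. There we estimate $R_\ve$ directly from \eqref{formula_lemma51}: its left side is bounded by $C\ve^{-1}$ times $\phi_\ve$-values, which are of order $e^{-c\ve^{-1/16}}$, hence negligible. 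Using $\int q^2\phi_\ve^2\,dq\le C$, the remaining integral is bounded by $C\int(1+t^2)\psi_\ve^2\,dt$.

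\emph{Main obstacle.} The delicate point is the treatment of the region near $q=\ve^{-1/24}$ and of the cross terms across $\partial\tilde\Omega_\ve$, where \eqref{formula_lemma51} gives only a one-sided bound. The plan is to use crude bounds there, showing that $\phi_\ve$ and $\phi_\ve'$ are super-polynomially small at $q\sim\ve^{-1/24}$. This holds because the ${\rm Ai}$ part decays like $e^{-c\ve^{-1/16}}$ and the ${\rm Bi}$ part is multiplied by $\beta_\ve$. Every such contribution, even after the factor $\ve^{-13/6}$, is then $o(1)\int\psi_\ve^2$.
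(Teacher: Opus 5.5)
Your proposal is correct and follows the paper's proof. It uses the factorized representation \eqref{Phi_ve}--\eqref{Psi_ve}, the compact support of $J$ (so that $|q-\tilde q|\le C\ve^{1/3}$ and $|t-\tilde t|\le C\ve^{1/2}$), the one-dimensional difference-quotient bound giving the factor $\ve^{3/2}\int(\psi_\ve')^2dt$, and the cancellation of the $\Lambda_\ve$ and $\alpha\ve^{2/3}q$ terms in the potential part via \eqref{Tse_Nashe_Vse}. The only variation is that you bound $\phi_\ve(q)\phi_\ve(\tilde q)\le\|\phi_\ve\|_\infty\,\phi_\ve(q)$ and use $\int_0^{\ve^{-1/24}}\phi_\ve\,dq\le C$, whereas the paper uses the local comparability $\phi_\ve(\tilde q)\le C\phi_\ve(q)$ for $|\tilde q-q|\le T\ve^{1/3}$ to reduce to $\int\phi_\ve^2\,dq$. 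Your explicit treatment of the right end, where only a lower bound on $R_\ve$ is available, and of the interaction with $\tilde\Omega_\ve$, fills in points the paper leaves implicit.
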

\begin{proof}
We have 
\begin{equation}
\begin{aligned}
	\label{77}
\big|\psi_\ve(\tilde t)- \psi_\ve(t)\big|^2=
\Biggl(\int\limits_{t}^{\tilde t} \psi_\ve^\prime(\tau) d\tau\Biggr)^2 \leq 
|\tilde t-t| \left|\int\limits_{t}^{\tilde t}\Bigl(\psi_\ve^{\prime} (\tau)\Bigr)^2 d\tau \right|
\end{aligned}
\end{equation}
Notice also that there exists $T>0$ such that (for sufficiently small $\ve$)
$$
J\Biggl(
\frac{|X(\ve^{2/3}q,\ve^{1/2}t)-X(\ve^{2/3}\tilde q,\ve^{1/2}\tilde t)|^2}{\ve^2}\Biggr)=0
$$
when $|q-\tilde q|>T\ve^{1/3}$, $|t-\tilde t|>T\ve^{1/2}$.     
Hence
\begin{equation*}
	\begin{aligned}
		\Phi_{1,\ve}(\psi_{\ve}(t))\leq
		C \frac{Q_\ve}{\ve^{11/6}} \int\limits_{0}^{\ve^{-1/24}} \phi_\ve(q) \int\limits_{\tilde q_\ve^-(q)}^{\tilde q_\ve^+(q)}\, 
			 \phi_\ve(\tilde q) \, d\tilde q dq,
	\end{aligned}
\end{equation*}
where $\tilde q_\ve^-(q)=\max\{0,q-T\ve^{1/3}\}$, $\tilde q_\ve^+(q)=\min\{\ve^{-1/24},q+T\ve^{1/3}\}$, and
\begin{equation*}
		Q_\ve=
		\int\limits_{-\infty}^{\infty}
		\int\limits_{t-T\ve^{1/2}}^{t+T\ve^{1/2}}\,   \Bigl(\psi_{\ve}(t)- \psi_{\ve}(\tilde t)\Bigr)^2d\tilde t dt.
	\end{equation*}
From \eqref{77}, integrating by parts we find  that
\begin{equation*}
	\int\limits_{t-T\ve^{1/2}}^{t+T\ve^{1/2}}\,   \Bigl(\psi_{\ve}(t)- \psi_{\ve}(\tilde t)\Bigr)^2d\tilde t \leq \int\limits_{t-T\ve^{1/2}}^{t+T\ve^{1/2}}  
	(\tilde t-t)	\int\limits_{t}^{\tilde t}\Bigl(\psi_\ve^{\prime} (\tau)\Bigr)^2 d\tau
		 d\tilde t  \leq \, \frac{T^2\ve}{2}
	\int\limits_{t-T\ve^{1/2}}^{t+T\ve^{1/2}} \Bigl(\psi_\ve^{\prime} (\tau)\Bigr)^2 d \tau.
\end{equation*}
Therefore
\begin{equation*}
 		Q_\ve
 \leq 	\frac{T^2\ve}{2}\int\limits_{-\infty}^{\infty}
 \int\limits_{t-T\ve^{1/2}}^{t+T\ve^{1/2}} \bigl(\psi^{\prime}_{\ve}(\tau)\bigr)^2 d \tau dt=
 \frac{T^2\ve}{2}\int\limits_{-\infty}^{\infty}
 \int\limits_{-T\ve^{1/2}}^{T\ve^{1/2}} \bigl(\psi^{\prime}_{\ve}(t+h)\bigr)^2 dhd t= T^3 \ve^{3/2} \int\limits_{-\infty}^{\infty}
  \Bigl(\psi^{\prime}_{\ve}(\tau)\Bigr)^2 d\tau.
\end{equation*}
By construction of the function $\phi_\ve$, it holds for sufficiently small $\ve$ that $\phi_\ve (\tilde q)\leq C\phi_\ve (q)$ for $ \tilde q_\ve^-(q)\leq \tilde q \leq \tilde q_\ve^+(q)$ with $C>0$ 
independent of $q$ and $\ve$.
 Thus,
 \begin{equation*}
	\begin{aligned}
			\Phi_{1, \ve}(\psi_{\ve}(t)) 
\leq C_1
\int\limits_{0}^{\ve^{-1/24}}
	\phi^2_\ve(q)dq \int\limits_{-\infty}^{\infty}	\Bigl(\psi^{\prime}_{\ve}(\tau)\Bigr)^2 d\tau.
			\end{aligned}
\end{equation*}
Also, in view of \eqref{Tse_Nashe_Vse} and \eqref{Psi_ve} the remaining terms can be estimated as follows
\begin{equation*}
	\begin{aligned}
		\frac{1}{\ve^{13/6}}\left(	\Phi_{2,\ve}(\rho_\ve)	+  \int\limits_{\Omega}(m(x)-m(0)-\ve^{2/3}\Lambda_\ve+\ve)\rho^2_\ve(x)dx \right)\\ 
		\leq
		\int\limits_{0}^{\ve^{-1/24}}
			\int\limits_{\mathbb{R}}
		(1+bt^2)\psi_\ve^2(t)\phi^2_\ve(q)dtdq +O(\ve^{1/6}).
			\end{aligned} 
\end{equation*}
Lemma is proved.
\end{proof}

\subsection{Proof of Theorem \ref{Th41} (compactness)}

\label{CompactThM}

We consider a sequence of functions $\rho_\ve$ such that $F_\ve(\rho_\ve)\leq C$. Since   $m(x)-m(0)\geq c \, \ve^{5/8}$ (with $c>0$) in $\Omega_\ve=\{x\in\Omega: {\rm dist}(x,\partial\Omega)\geq \frac{1}{4}\ve^{5/8}\}$ we have
 \begin{equation*} 
 	\frac{1}{\ve^{7/6}}\int\limits_{\Omega_\ve}
 	\rho^2_\ve(x)dx\leq C \ve^{3/8}.
 	\end{equation*}
Similarly, as $m(X(r,s))-m(0)\geq c \, \ve^{11/12}$  for $0<r<\frac{1}{4}\ve^{5/8}$, $|s|>\frac{1}{4}\ve^{11/24}$, 
\begin{equation*} 
	\frac{1}{\ve^{7/6}} \int\limits_0^{\frac{1}{4}\ve^{5/8}}\int\limits_{|s|>\frac{1}{4}\ve^{11/24}} \rho_\ve^2 (X(r,s)) (1-r\kappa(s)) ds dr
	\leq C \ve^{1/12}.
\end{equation*}
This proves item $(i)$ of Theorem \ref{Th41}.
  
Now represent the restriction of $\rho_\ve$ to $\Omega\setminus \tilde\Omega_\ve$ in the form \eqref{factorization}. Then thanks to \eqref{Phi_ve} and  \eqref{Psi_ve} we have
 \begin{equation*}
F_\ve(\rho_\ve)\geq\Psi_\ve(\psi_\ve) +\frac{c}{\ve^{37/24}}\int\limits_{\tilde\Omega_\ve}
\rho^2_\ve(x)dx + \frac{c}{\ve^{1/12}}\int\limits_0^{\ve^{-1/24}} \int\limits_{|t|>\ve^{-1/24}} \phi^2_\ve(q)\psi^2_\ve(q,t)dtdq,
\end{equation*}
where 
\begin{equation}
\begin{aligned}
	&\Psi_\ve(\psi_\ve) =
		\frac{1}{2\ve^{11/6}} \int\limits_{0}^{\ve^{-1/24}} \int\limits_{0}^{\ve^{-1/24}} 
	\int\limits_{-\ve^{-1/24}}^{\ve^{-1/24}}   \int\limits_{-\ve^{-1/24}}^{\ve^{-1/24}} 
	J\left( 
	\frac{|X(\ve^{2/3} q,\ve^{1/2}t)-X(\ve^{2/3}\tilde q,\ve^{1/2}\tilde t)|^2}{\ve^2}\right)\\ 
	&\quad\quad\times\left( \psi_\ve\left(q,t\right) 
	-\ \psi_\ve\left(\tilde q,\tilde t \right)
	\right)^2 \phi_\ve( q) j_\ve(q,\ve^{1/2} t)\phi_\ve(\tilde q)j_\ve(\tilde q,\ve^{1/2}\tilde t) dtd\tilde t dqd\tilde q\\ 
	&+\frac{1}{\ve}
	\int\limits_{0}^{\ve^{-1/24}} 
		\int\limits_{-\ve^{-1/24}}^{\ve^{-1/24}}
	\left(m\big(X(\ve^{2/3} q,\ve^{1/2}t)\big)-m(0) -\alpha\ve^{2/3} q +\ve+
	\ve^{2/3} R_\ve(q,\ve^{1/2}t)\right)\\
	&\quad\quad	\times\phi^2_\ve(q)\psi^2_\ve(q,t)
	 j_\ve(q,\ve^{1/2} t)dtdq.
	 \label{8.2}
 	\end{aligned}
\end{equation}
Observe that, in view of \eqref{Tse_Nashe_Vse}, we have the following lower bound for $0\leq q\leq \ve^{-1/24}$ and $-\ve^{-1/24}\leq t\leq \ve^{-1/24}$.
\begin{equation*}
m\big(X(\ve^{2/3} q,\ve^{1/2}t)\big)-m(0) -\alpha\ve^{2/3} q +\ve+
\ve^{2/3} R_\ve(q,\ve^{1/2}t)\geq \frac{\ve}{2}bt^2+\ve+O\big(\ve^{13/12}\big).
\end{equation*}
Therefore both terms in \eqref{8.2} are nonnegative.
Thus
 \begin{equation}
\Psi_\ve(\psi_\ve) \leq F_\ve(\rho_\ve)\leq C    \quad \text{and} \quad
			\int\limits_{0}^{\ve^{-1/24}}
			\int\limits_{-\ve^{-1/24}}^{\ve^{-1/24}}
(t^2+1)\phi^2_\ve(q)\psi^2_\ve(q,t)
j_\ve(q,\ve^{1/2} t)dtdq<C.
\label{boundt2}
\end{equation}
Since $J$ is a continuos function and $J(0)>0$, there exist $q^*>0$ and $t^*>0$ such that 
 \begin{equation*}
 	J\left( 
\frac{|X(\ve^{2/3}q,\ve^{1/2}  t)-X(\ve^{2/3}(q+h),\ve^{1/2}(t+l))|^2}{\ve^2}\right)
\geq c>0
\end{equation*}
for $0\leq h<q^*\ve^{1/3}$, $-t^*\ve^{1/2}< l<t^*\ve^{1/2}$.
Hence 
\begin{equation*}
	 \int\limits_{0}^{\frac{1}{2}\ve^{-1/24}} 
		\int\limits_{-\frac{1}{2}\ve^{-1/24}}^{\frac{1}{2}\ve^{-1/24}} \int\limits_{0}^{q^*\ve^{1/3}}    \int\limits_{-t^*\ve^{1/2}}^{t^*\ve^{1/2}} 
\left( \psi_\ve\left(q,t\right) 
		-\ \psi_\ve\left(q+h,t+l \right)
		\right)^2 \phi_\ve( q) \phi_\ve(q+h) dldh dtdq	\leq C \ve^{11/6}.
	\end{equation*}

Since $\phi_\ve(q+h)\geq c\, \phi_\ve(q)$ for $0\leq h<q^*\ve^{1/3}$ and with $c>0$ independent of $q$ and $\ve$,
\begin{equation}
	\int\limits_{0}^{\frac{1}{2}\ve^{-1/24}} 
	\int\limits_{-\frac{1}{2}\ve^{-1/24}}^{\frac{1}{2}\ve^{-1/24}} \int\limits_{0}^{q^*\ve^{1/3}}    \int\limits_{-t^*\ve^{1/2}}^{t^*\ve^{1/2}} 
	\left( \psi_\ve\left(q,t\right) 
	-\ \psi_\ve\left(q+h,t+l \right)
	\right)^2 \phi^2_\ve( q) dldh dtdq	\leq C \ve^{11/6}.
	\label{8.7}
\end{equation}

There exist $h_\ve^+\in\bigl[\frac{3}{4} q^* \ve^{1/3}, q^* \ve^{1/3}\bigr]$ and
$h_\ve^-\in\bigl[0,\frac{1}{4} q^* \ve^{1/3}\bigr]$ such that
\begin{equation}
   \	\int\limits_{0}^{\frac{1}{2}\ve^{-1/24}} 
   \int\limits_{-\frac{1}{2}\ve^{-1/24}}^{\frac{1}{2}\ve^{-1/24}}
	\int\limits_{-t^*\ve^{1/2}}^{t^*\ve^{1/2}}
	\, \phi^2_\ve(q) \Bigl(\psi_\ve(q,t)-\psi_\ve(q+h_\ve^{\pm},t+l)\Bigr)^2 dldtdq\leq C\ve^{3/2},
	\label{8.8}
	\end{equation}
and there exist $l_\ve^+\in\bigl[\frac{3}{4} t^* \ve^{1/2}, t^* \ve^{1/2}\bigr]$ and
$l_\ve^-\in\bigl[0,\frac{1}{4} t^* \ve^{1/2}\bigr]$ such that
\begin{equation}
		\int\limits_{0}^{\frac{1}{2}\ve^{-1/24}} 
	\int\limits_{-\frac{1}{2}\ve^{-1/24}}^{\frac{1}{2}\ve^{-1/24}}
\int\limits_{0}^{q^*\ve^{1/3}}
	\, \phi^2_\ve(q) \Bigl(\psi_\ve(q,t)-\psi_\ve(q+h,t+l_\ve^{\pm})\Bigr)^2 dhdtdq\leq C\ve^{4/3}.
	\label{8.9}
\end{equation}

 Consider
 \begin{equation*}
	\begin{aligned}
	\bar\psi_\ve(q,t)=	
	\frac{1}{\big(h_\ve^+-h_\ve^-\big)\big(l_\ve^+-l_\ve^-\big)}
		\int\limits_{h_\ve^-}^{h_\ve^+} \, 	\int\limits_{l_\ve^-}^{l_\ve^+}\psi_\ve(h+q,l+t)dhdl.  	
	\end{aligned}
\end{equation*}
Applying Jensen's inequality, from \eqref{8.7} we derive
\begin{equation}
		\int\limits_{0}^{\frac{1}{2}\ve^{-1/24}} 
	\int\limits_{-\frac{1}{2}\ve^{-1/24}}^{\frac{1}{2}\ve^{-1/24}}
  \phi^2_\ve(q) \Bigl(\psi_\ve(q,t)-\bar\psi_\ve(q,t)\Bigr)^2 dtdq\leq C\ve^{2/3}.
  \label{phipsi}
\end{equation}
Further we calculate 
  \begin{equation*}
 	\begin{aligned}
 &	\frac{\partial \bar\psi_\ve}{\partial q}	=	\frac{1}{\big(h_\ve^+-h_\ve^-\big)\big(l_\ve^+-l_\ve^-\big)}
 		\int\limits_{l_\ve^-}^{l_\ve^+}
 		\Bigl(\psi_\ve(h_\ve^++q,l+t)-\psi_\ve(h_\ve^-+q,l+t)\Bigr)dl\\
 		=&\frac{1}{\big(h_\ve^+-h_\ve^-\big)\big(l_\ve^+-l_\ve^-\big)}
 \Biggl[\int\limits_{l_\ve^-}^{l_\ve^+}\Bigl(\psi_\ve(h_\ve^++q,l+t)-\psi_\ve(q,t)\Bigr)dl
 +\int\limits_{l_\ve^-}^{l_\ve^+}\Bigl(\psi_\ve(q,t)-\psi_\ve(h_\ve^-+q,l+t)\Bigr)dl\Biggr]
 	 	\end{aligned}
 \end{equation*}
 and using Jensen's inequality we get
  \begin{equation*}
 	\begin{aligned}
 	\Biggl|\frac{\partial \bar\psi_\ve}{\partial q}	\Biggr|^2\leq&	\frac{2}{\big(h_\ve^+-h_\ve^-\big)^2\big(l_\ve^+-l_\ve^-\big)^2}
 \Biggl[\int\limits_{l_\ve^-}^{l_\ve^+}\Bigl(\psi_\ve(h_\ve^++q,l+t)-\psi_\ve(q,t)\Bigr)dl\Biggr]^2\\ 
 		+&\frac{2}{\big(h_\ve^+-h_\ve^-\big)^2\big(l_\ve^+-l_\ve^-\big)^2}
 \Biggl[\int\limits_{l_\ve^-}^{l_\ve^+}\Bigl(\psi_\ve(q,t)-\psi_\ve(h_\ve^-+q,l+t)\Bigr)dl\Biggr]^2 \\ 
 \leq&
 	\frac{2}{\big(h_\ve^+-h_\ve^-\big)^2\big(l_\ve^+-l_\ve^-\big)}
 \int\limits_{l_\ve^-}^{l_\ve^+}\Bigl(\psi_\ve(h_\ve^++q,l+t)-\psi_\ve(q,t)\Bigr)^2dl\\ 
 +&\frac{2}{\big(h_\ve^+-h_\ve^-\big)^2\big(l_\ve^+-l_\ve^-\big)}
 \int\limits_{l_\ve^-}^{l_\ve^+}\Bigl(\psi_\ve(q,t)-\psi_\ve(h_\ve^-+q,l+t)\Bigr)^2dl.  
 	\end{aligned}
 \end{equation*}
Therefore by virtue of \eqref{8.8},
\begin{equation}
	\int\limits_{0}^{\frac{1}{2}\ve^{-1/24}} 
\int\limits_{-\frac{1}{2}\ve^{-1/24}}^{\frac{1}{2}\ve^{-1/24}}
\, \phi^2_\ve(q)  \,	\Biggl|\frac{\partial \bar\psi_\ve}{\partial q}	\Biggr|^2dtdq\leq
C\ve^{1/3}.
\label{boundq}
\end{equation}
 Similarly,
\begin{equation*}
 \begin{aligned}
 	\Biggl|\frac{\partial \bar\psi_\ve}{\partial t}	\Biggr|^2\leq&
 	\frac{2}{\big(h_\ve^+-h_\ve^-\big)\big(l_\ve^+-l_\ve^-\big)^2}
 	\int\limits_{h_\ve^-}^{h_\ve^+}\Bigl(\psi_\ve(q,l_\ve^++t)-\psi_\ve(q,t)\Bigr)^2dh\\ 
 	+&\frac{2}{\big(h_\ve^+-h_\ve^-\big)\big(l_\ve^+-l_\ve^-\big)^2}
 	\int\limits_{h_\ve^-}^{h_\ve^+}\Bigl(\psi_\ve(q,t)-\psi_\ve(q,l_\ve^-+t)\Bigr)^2dh,  
 \end{aligned}
\end{equation*}
 and consequently, due to \eqref{8.9},
 \begin{equation}
 	\int\limits_{0}^{\frac{1}{2}\ve^{-1/24}} 
 \int\limits_{-\frac{1}{2}\ve^{-1/24}}^{\frac{1}{2}\ve^{-1/24}}
 	\, \phi^2_\ve(q)  \,	\Biggl|\frac{\partial \bar\psi_\ve}{\partial t}	\Biggr|^2  dt dq \leq
 	C.
 	\label{boundt}
 \end{equation}

\begin{prop} For any 
	function $u\in H^1\big([0,+\infty)\big)$ with compact support it holds that
	\begin{equation}
		\begin{aligned}
			\frac{1}{2}\int\limits_0^\infty &\left(\phi^\prime_0(q)u(q)\right)^2 dq + \frac{2\alpha}{\theta} \int\limits_0^\infty \phi^2_0(q) u^2(q)\, qdq\\
			&\leq 2\left(1+\frac{\Lambda_0}{\theta}\right)\int\limits_0^\infty \phi^2_0(q) u^2(q)dq +2 \int\limits_0^\infty \left(\phi_0(q) u^\prime(q)\right)^2 dq.
			\label{8.17}
		\end{aligned}
	\end{equation} 
\label{Prop81}
\end{prop}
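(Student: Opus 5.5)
The plan is to test the Airy equation \eqref{Airy_sln} satisfied by $\phi_0$ against $\phi_0 u^2$ and then absorb the cross term by Young's inequality. Recall that $\phi_0$ solves $-\frac{\theta}{2}\phi_0''+\alpha q\phi_0=\Lambda_0\phi_0$ on $(0,\infty)$ with $\phi_0(0)=0$; here $\theta=\vartheta/2$, consistent with $\theta=\int I_0(\zeta)\zeta^2d\zeta$. Moreover $\phi_0$ is a scaled Airy function, hence smooth on $[0,\infty)$.

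Since $u\in H^1([0,+\infty))$ has compact support, $\phi_0u^2$ belongs to $H^1$, vanishes at $q=0$ (because $\phi_0(0)=0$) and vanishes for large $q$. I multiply the equation by $\phi_0u^2$ and integrate over $(0,\infty)$. Integrating by parts in the second-derivative term produces no boundary contributions, so
\begin{equation*}
\frac{\theta}{2}\int_0^\infty \phi_0'\,(\phi_0u^2)'\,dq+\alpha\int_0^\infty q\,\phi_0^2u^2\,dq=\Lambda_0\int_0^\infty\phi_0^2u^2\,dq .
\end{equation*}

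Next I expand $(\phi_0u^2)'=\phi_0'u^2+2\phi_0uu'$ and divide by $\theta/2$. This gives
\begin{equation*}
\int_0^\infty(\phi_0'u)^2dq+2\int_0^\infty(\phi_0'u)(\phi_0u')\,dq+\frac{2\alpha}{\theta}\int_0^\infty q\,\phi_0^2u^2\,dq=\frac{2\Lambda_0}{\theta}\int_0^\infty\phi_0^2u^2\,dq .
\end{equation*}
The only term without a sign is the cross term. I bound it with Young's inequality in the form $|2ab|\le \frac12 a^2+2b^2$, taking $a=\phi_0'u$ and $b=\phi_0u'$. Moving this to the right-hand side yields
\begin{equation*}
\frac12\int_0^\infty(\phi_0'u)^2dq+\frac{2\alpha}{\theta}\int_0^\infty q\,\phi_0^2u^2\,dq\le\frac{2\Lambda_0}{\theta}\int_0^\infty\phi_0^2u^2\,dq+2\int_0^\infty(\phi_0u')^2dq .
\end{equation*}
This is \eqref{8.17} in a slightly sharper form: the right-hand side of \eqref{8.17} exceeds ours by the nonnegative quantity $2\int_0^\infty\phi_0^2u^2\,dq$.

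There is no serious obstacle. The one point to check is that the integration by parts is legitimate with no boundary terms. This holds because $\phi_0\in C^\infty([0,\infty))$, $u^2\in W^{1,1}$ with compact support, and $\phi_0(0)=0$ kills the term at the origin. Nothing about the sign of $\Lambda_0$ is needed, although in fact $\Lambda_0>0$.
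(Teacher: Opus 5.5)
Your proof is correct and is essentially the paper's argument: the paper also integrates by parts using the Airy equation for $\phi_0$ to obtain the identity you derive, then absorbs the cross term via $|2ab|\le \frac12 a^2+2b^2$. You are also right that this yields the bound with $\frac{2\Lambda_0}{\theta}$ in place of $2\bigl(1+\frac{\Lambda_0}{\theta}\bigr)$. The paper's own computation gives the same sharper estimate, so the extra $2\int_0^\infty\phi_0^2u^2\,dq$ in the stated inequality is just a harmless weakening.
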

\begin{proof}
	Integrating by parts and using equation \eqref{Airy_sln} we get 
	\begin{equation}
		\begin{aligned}
			\int\limits_0^\infty \left(\phi'_0(q) u(q)\right)^2dq =
			\frac{2}{\theta}\int\limits_0^\infty\phi^2_0(q)u^2(q) \left(\Lambda_0 -\alpha q\right)dq
			-2		\int\limits_0^\infty \phi^\prime_0(q) u(q) \phi_0(q)u^\prime(q) dq.
			\label{8.18}
		\end{aligned}
	\end{equation} 
	Then, applying the Cauchy-Schwarz inequality we estimate the second term as follows 
	\begin{equation}
		\begin{aligned}
			-2&		\int\limits_0^\infty \phi^\prime_0(q) u(q) \phi_0(q)u^\prime(q)  dq \leq
			2\left(	\int\limits_0^\infty \left(\phi^\prime_0(q)u(q)\right)^2dq
			\int\limits_0^\infty \left( \phi_0(q) u^\prime(q)\right)^2 dq\right)^{1/2}\\ 
			&\leq \frac{1}{2} \int\limits_0^\infty \left( \phi^\prime_0(q)u(q)\right)^2dq+
			2\int\limits_0^\infty \left( \phi_0(q)u^\prime(q)\right)^2 dq.
			\label{8.19}
		\end{aligned}
	\end{equation}
	Combining \eqref{8.18} and \eqref{8.19} we obtain \eqref{8.17}.
\end{proof}

To prove the compactness of the sequence of functions  $\rho_\ve(X(\ve^{2/3}q,\ve^{1/2}t))= \phi_\ve(q)\psi_\ve(q,t)$ (restricted to $[0,\frac{1}{4}\ve^{-1/24}]\times [-\frac{1}{2}\ve^{-1/24},\frac{1}{2}\ve^{-1/24}]$ and extended by zero elsewhere), 
consider 
\begin{equation*}
	\begin{aligned}
	w_\ve&(q,t)=\, \phi_\ve(q)\bar\psi_\ve(q,t)\big(1-\chi(q-\frac{1}{4}\ve^{-1/24})\big)\\ 
		=& \, \phi_0(q)\bar\psi_\ve(q,t)\big(1-\chi(q-\frac{1}{4}\ve^{-1/24})\big)+ \big(\phi_\ve(q)-\phi_0(q)\big)\bar\psi_\ve(q,t)\big(1-\chi(q-\frac{1}{4}\ve^{-1/24})\big)\\ 
		=:& \, w_{\ve 1}(q,t)+	w_{\ve 2}(q,t).
	\end{aligned}
\end{equation*}
Since 
\begin{equation*}
	\begin{aligned}
\left|\frac{\partial w_{\ve 1}}{\partial q}\right|^2   \leq&\,  3
\left|\phi'_0(q)\bar\psi_\ve(q,t)\big(1-\chi(q-\frac{1}{4}\ve^{-1/24})\big)\right|^2\\ 
+&\, 3
\left|\phi_0(q)\frac{\partial \bar\psi_\ve(q,t)}{\partial q}\big(1-\chi(q-\frac{1}{4}\ve^{-1/24})\big)\right|^2
+3\left|\phi_0(q)\bar\psi_\ve(q,t)\chi^\prime(q-\frac{1}{4}\ve^{-1/24})\right|^2, 
\end{aligned}
\end{equation*}
we can employ Proposition \ref{Prop81}, bounds \eqref{boundq}, \eqref{boundt} (that hold with $\phi_0$ in place of $\phi_\ve$ due to the construction of $\phi_\ve$) and \eqref{boundt2} to show that
\begin{equation*}
	\begin{aligned}
		\int\limits_{0}^{\infty} 
	\int\limits_{-\frac{1}{2}\ve^{-1/24}}^{\frac{1}{2}\ve^{-1/24}}	\left|\nabla w_{\ve 1}(q,t)\right|^2  dtdq 
	+	\int\limits_{0}^{\infty} 
	\int\limits_{-\frac{1}{2}\ve^{-1/24}}^{\frac{1}{2}\ve^{-1/24}}	 w^2_{\ve 1}(q,t)(1+q+t^2)  dtdq
	\leq C.
	\end{aligned}
\end{equation*}
It follows that the sequence of the functions $w_{\ve 1}(q,t)$
(restricted to $[0,\infty)\times [-\frac{1}{2}\ve^{-1/24},\frac{1}{2}\ve^{-1/24}]$ and extended by zero elswhere)  is compact in $L^2([0,\infty)\times\mathbb{R})$.

	Next we show that the $L^2$-norm of the function 
	$(\phi_\ve(q)-\phi_0(q))\, \bar\psi_\ve(q,t)\, \big(1-\chi(q-\frac{1}{4}\ve^{-1/24})\big)$ tends to zero as $\ve\to 0$. To this end observe that
$$
|\phi_\ve(q)-\phi_0(q)|\leq C\ve^{1/3}(1-\chi(q-1)+\sqrt{q}\, \phi_0(q))\quad \text{for} \ \ 0\leq q \leq  \frac{1}{4}\ve^{-1/24}+1.
$$
and prove that the functions $z_\ve:=\ve^{1/3}\bar\psi_\ve(q,t)\,(1-\chi(q-1))$ and $\ve^{1/3}\sqrt{q}\, \phi_0(q) \bar\psi_\ve(q,t)\, \big(1-\chi(q-\frac{1}{4}\ve^{-1/24})\big)$ 
converge to zero (in $L^2$-norm). Since $\sqrt{q}\leq 1/\ve^{1/48}$ on the support of the latter functions, the sought convergence of these functions follows 
from \eqref{boundt2} and \eqref{phipsi}. Regarding  functions $z_\ve$, we use the following version of the Poincar\'e inequality
\begin{equation}
	\begin{aligned}
		\int\limits_0^2\, |z_\ve|^2dq\leq C\left(
		\int\limits_0^2\, \left|\frac{\partial z_\ve}{\partial q}\right|^2dq+ \left|\int\limits_0^2\,z_\ve\phi_0dq \right|^2 \right) 
		\label{Poincare}
	\end{aligned}
\end{equation}
Computing the partial derivative of $z_\ve$ in $q$ and using   \eqref{boundt2}, \eqref{phipsi},\eqref{boundq} we derive 
\begin{equation}
	\begin{aligned}
\int\limits_{-\frac{1}{2}\ve^{-1/24}}^{\frac{1}{2}\ve^{-1/24}}&
\int\limits_0^2
\left|\frac{\partial z_\ve}{\partial q}\right|^2dqdt\\ 
&\leq 
C\int\limits_{-\frac{1}{2}\ve^{-1/24}}^{\frac{1}{2}\ve^{-1/24}}\int\limits_0^2
\left|\frac{\partial \bar\psi_\ve}{\partial q}\right|^2\phi^2_\ve(q)dqdt\
+\ve^{2/3}
\int\limits_{-\frac{1}{2}\ve^{-1/24}}^{\frac{1}{2}\ve^{-1/24}}
\int\limits_1^2
|\bar\psi_\ve|^2|\chi^\prime(q-1)|^2dqdt\leq C \ve^{1/3}.
\label{8.24}
\end{aligned}
\end{equation}
We apply the Cauchy-Schwarz inequality to the second term in \eqref{Poincare}  and use \eqref{boundt2}, \eqref{phipsi} to get  
\begin{equation}
	\begin{aligned}
\int\limits_{-\frac{1}{2}\ve^{-1/24}}^{\frac{1}{2}\ve^{-1/24}}
	\left|\int\limits_0^2\,z_\ve\phi_0dq \right|^2dt\leq 
	2\int\limits_{-\frac{1}{2}\ve^{-1/24}}^{\frac{1}{2}\ve^{-1/24}}
	\int\limits_0^2\,z_\ve^2\phi_0^2dqdt
	\leq C\ve^{2/3}
	\int\limits_{-\frac{1}{2}\ve^{-1/24}}^{\frac{1}{2}\ve^{-1/24}}
	\int\limits_0^2 \bar\psi_\ve^2\phi^2_\ve(q)dqdt\leq C_1\ve^{2/3}.
	\label{8.25}
\end{aligned}
\end{equation}
Combining \eqref{Poincare}, \eqref{8.24} and \eqref{8.25} we obtain that $L^2$-norm of $z_\ve$ tends to zero.

Thus, Theorem \ref{Th41} is completely proved.

\subsection{$\Gamma$-$\liminf$ inequality}
\label{lowerboundineq}

In this section we establish the $\Gamma$-$\liminf$ inequality
\begin{equation*}
	\liminf_{\ve\to 0} F_\ve(\rho_\ve)\geq F_0(\rho_0)
\end{equation*}
for any sequence of functions $\rho_\ve$ that converges, with respect to the convergence defined in Theorem \ref{Th41}, to $\rho_0$, as $\ve\to 0$. Due to Theorem \ref{Th41}, the limit function $\rho_0$ necessarily has the form $\rho_0(q,t)=\phi_0(q)\psi_0(t)$, where $\psi_0\in H^1(\mathbb{R})$. Moreover, in view of Lemma \ref{Lemma72}, it is  sufficient to prove the $\Gamma$-$\liminf$ inequality in the case when $\psi_0\in C_c^\infty(\mathbb{R})$.  
We use the following bound
\begin{equation*}
	\begin{aligned}
F_\ve(\rho_\ve)&\geq \Upsilon_\ve(\psi_\ve)\\
&+\frac{1}{\ve}
\int\limits_{0}^{\frac{1}{4}\ve^{-1/24}} 
\int\limits_{-\frac{1}{2}\ve^{-1/24}}^{\frac{1}{2}\ve^{-1/24}}
\left(m\big(X(\ve^{2/3} q,\ve^{1/2}t)\big)-m(0) -\alpha\ve^{2/3} q +\ve+
\ve^{2/3} R_\ve(q,\ve^{1/2}t)\right)\\
&	\times\phi^2_\ve(q)\psi^2_\ve(q,t)
j_\ve(q,\ve^{1/2} t)dtdq,
	\end{aligned}
\end{equation*}
where 
\begin{equation}
	\begin{aligned}
		&\Upsilon_\ve(\psi_\ve) =
		\frac{1}{2\ve^{11/6}} \int\limits_{0}^{\frac{1}{4}\ve^{-1/24}} \int\limits_{0}^{\frac{1}{4}\ve^{-1/24}} 
		\int\limits_{-\frac{1}{2}\ve^{-1/24}}^{\frac{1}{2}\ve^{-1/24}}   \int\limits_{-\frac{1}{2}\ve^{-1/24}}^{\frac{1}{2}\ve^{-1/24}} 
		J\left( 
		\frac{|X(\ve^{2/3} q,\ve^{1/2}t)-X(\ve^{2/3}\tilde q,\ve^{1/2}\tilde t)|^2}{\ve^2}\right)\\ 
		&\quad\quad\times\left( \psi_\ve\left(q,t\right) 
		-\ \psi_\ve\left(\tilde q,\tilde t \right)
		\right)^2 \phi_\ve( q) j_\ve(q,\ve^{1/2} t)\phi_\ve(\tilde q)j_\ve(\tilde q,\ve^{1/2}\tilde t) dtd\tilde t dqd\tilde q. 
			\label{Upsilon}
	\end{aligned}
\end{equation}

Write the functional \eqref{Upsilon} in the form
\begin{equation}
	\begin{aligned}
		\Upsilon_\ve(\psi_\ve)&=\Upsilon_\ve(\psi_0)+\Upsilon_\ve(\psi_\ve-\psi_0)\\
		&+2
\int\limits_{0}^{\frac{1}{4}\ve^{-1/24}}
	\int\limits_{-\frac{1}{2}\ve^{-1/24}}^{\frac{1}{2}\ve^{-1/24}}  
 (A_\ve\psi_0)(q,t)\left(\psi_\ve(q,t)-\psi_0(t)\right) \phi_\ve(q)j_\ve( q,\ve^{1/2}t)dtdq,
 \label{9.3}
\end{aligned}
\end{equation}
where
\begin{equation*}
\begin{aligned}
 A_\ve\psi_0=	\frac{1}{\ve^{11/6}} &\int\limits_{0}^{\frac{1}{4}\ve^{-1/24}}
 \int\limits_{-\frac{1}{2}\ve^{-1/24}}^{\frac{1}{2}\ve^{-1/24}}  
J\left( 
\frac{|X(\ve^{2/3} q,\ve^{1/2}t)-X(\ve^{2/3}\tilde q,\ve^{1/2}\tilde t)|^2}{\ve^2}\right)\\ 
&\quad\quad\times\left( \psi_0\left(t\right) 
-\ \psi_0\left(\tilde t \right)
\right) \phi_\ve(\tilde q)j_\ve(\tilde q,\ve^{1/2}\tilde t)  d\tilde t d\tilde q.
\end{aligned}
\end{equation*} 

Arguing as in the proof of Lemma \ref{Lemma71}, one can show that $\Upsilon_\ve(\psi_0)\mathop{\longrightarrow}\limits_{\ve\to 0}\,  \frac{\theta}{2} \int\limits_{-\infty}^\infty
\big|\psi'_0(t)\big|^2dt$. The second term in 
\eqref{9.3} is non-negative.
In view of the convergence property of $\phi_\ve\psi_\ve$ and construction of the function $\phi_\ve$, the norm of $
(\psi_\ve-\psi_0)\phi_\ve$ in $L^2\left([0,\frac{1}{4}\ve^{-1/24}]\times[-\frac{1}{2}\ve^{-1/24}, \frac{1}{2}\ve^{-1/24]}\right)$
tends to zero. Hence, it suffices to show that $A_\ve\psi_0$ has uniformly bounded $L^2$-norm.
 
Perform the change of variables $\tilde t-t=\ve^{1/2}\tau$, $\tilde q-q=\ve^{1/3}\xi$ and expand 
$\psi_0(\tilde t)=\psi_0(t)+\psi'_0(t)\ve^{1/2}\tau+ \frac{1}{2}\psi''_0(t^*)\ve\tau^2$. Using \eqref{yadro} 
 we get
\begin{equation*}
	\begin{aligned}
(A_\ve\psi_0)(q,t)&= \frac{1}{\ve}  \int\limits_{-q\ve^{-1/3}}^{\frac{1}{4}\ve^{-1/24}-q\ve^{-1/3}}\int\limits_{-\infty}^\infty 
\left(J\left( 
\tau^2j_\ve^2(q+\ve^{1/3}\xi,t+\ve^{1/2}\tau)+\xi^2\right)+\ve \mathcal O_\ve(q,t,\xi,\tau)\right)
\\ 
&\times \left(-\psi'_0(t)\ve^{1/2}\tau-\frac{1}{2}\psi''_0(t^*)\ve\tau^2\right) \phi_\ve(q+\ve^{1/3}\xi)j_\ve(q+\ve^{1/3}\xi,\ve^{1/2}t+\ve \tau) 
d\tau d\xi,
\end{aligned}
\end{equation*} 
where $ \mathcal O_\ve(q,t,\xi,\tau)$ is a uniformly bounded function vanishing when $|\xi|>c$ or $|\tau|>c$ for some $c>0$ independent of $\ve$.   
Then, in view of \eqref{jacobian} we obtain
\begin{equation*}
	\begin{aligned}
		\big|(A_\ve\psi_0)(q,t)\big|\leq C \left(\big|\psi'_0(t)\big|+\big|\psi''_0(t^{**})\big|\right) \phi_\ve(q)
	\end{aligned}
\end{equation*} 
and
\begin{equation*}
	\int\limits_{0}^{\frac{1}{4}\ve^{-1/24}}
	\int\limits_{-\frac{1}{2}\ve^{-1/24}}^{\frac{1}{2}\ve^{-1/24}}
		\big|(A_\ve\psi_0)(q,t)\big|^2 dtdq
		\leq C.
\end{equation*}

Thus, taking $\mathop{\liminf}\limits_{\ve \to 0}$ in \eqref{9.3} we get
\begin{equation*}
			\liminf_{\ve \to 0} \Upsilon_\ve(\psi_\ve)
		\geq \frac{\theta}{2} \int\limits_{-\infty}^\infty
		\big|\psi'_0(t)\big|^2dt.
\end{equation*}
Theorem \ref{Th43} is completely proved. 


\begin{thebibliography}{30}

\bibitem{AllPia2002} 
Allaire, G., Piatnitski, A.:  
Uniform spectral asymptotics for singularly perturbed locally periodic operators. 
Commun. Partial Differ. Equ., 27(3-4), 705–725 (2002)

\bibitem{BatZha2007} 
Bates, P. W., Zhao, G.: Existence, uniqueness and stability of the stationary solution to a nonlocal evolution equation arising in population dispersal.
J. Math. Anal. Appl. , 332(1), 428–440 (2007)


\bibitem{BerCovHoa2016} Berestycki, H., Coville, J., Vo, H.-H.: On the definition and the properties of the principal eigenvalue of some nonlocal operators.
J. Funct. Anal. 271(10), 2701–2751. (2016)  DOI:10.1016/j.jfa.2016.05.017 

\bibitem{BraPia2021} Braides, A., Piatnitski, A.: Homogenization of random convolution energies.  J. London Math. Soc. 104, no.1, 295-319 (2021)

\bibitem{Chen_etal2024} 
Chen, L., Painter, K., Surulescu, C., Zhigun, A.: Mathematical models for cell migration: a non-local perspective. 
Phil. Trans. R. Soc. B 375(1807), 20190379 (2020)


\bibitem{CroKim1970} Crow, J. F., Kimura, M.:  An Introduction to Population Genetics Theory. Harper \& Row, 1970


\bibitem{CorEtgRosWol2008} Cortazar, C., Elgueta, M., Rossi,J.D.,  Wolanski, N.: How to approximate the heat equation with
Neumann boundary conditions by nonlocal diﬀusion problems. Arch. Ration. Mech. Anal. 187 (1),  137–156 (2008)

\bibitem{HutMarMisVic2003}
Hutson, V., Martinez, S., Mischaikow, K., Vickers, G. T.: The evolution of dispersal. J. Math. Biol. 47(6), 483–517 (2003)


\bibitem{OthDunAlt1988} Othmer, H. G., Dunbar, S. R., Alt, W.: Models of dispersal in biological systems. J. Math. Biol. 26, 263–298 (1988)


\bibitem{LiCovWan2017}  Li, F., Coville, J., Wang, X.: 
On eigenvalue problems arising from nonlocal diffusion models. 
Discrete Contin. Dyn. Sys. 37(2), 879-903 (2017) 
doi: 10.3934/dcds.2017036


\bibitem{Cov2010}  Coville, J.: 
On a simple criterion for the existence of a principal eigenfunction of some nonlocal operators. 
J. Differ. Equ., 249(11), 2921–2953 (2010)

\bibitem{PiaZhi2019} Piatnitski, A., Zhizhina, E.: Homogenization of biased convolution type operators. 
Asymptot. Anal. 115 (3-4), 241–262 (2019)

\bibitem{PiaPirZhi2022} Piatnitski, A., Pirogov, S., Zhizhina, E.: 
Large deviations for Markov jump processes in periodic and locally periodic environments. Ann. Appl. Probab. 32 (6), 
 4611–4641 (2022)


\bibitem{PiaRyb2016} Piatnitski, A., Rybalko, V.: 
On the first eigenpair of singularly perturbed operators with oscillating coefficients. Commun. Partial Differ. Equ.  41 (1), 
1–31 (2016)

\bibitem{PiaZhi2017} 
Piatnitski, A., Zhizhina, E.: Periodic homogenization of nonlocal operators with a convolution-type kernel.
SIAM  J. Math. Anal. 49 (1), 64–81 (2017)

\bibitem{Cov2013}
Coville, J.: Singular measure as principal eigenfunction of some nonlocal operators. Appl. Math. Lett. 26(8), 831-835 (2013)

\bibitem{PiaRyb2024} Piatnitski, A., Rybalko, V.:  Homogenization of nonlocal spectral problems. Z. Angew. Math. Phys. 75(6), 226 (2024)

\end{thebibliography}
\end{document}